\theoremstyle{plain}
\newcommand{\mc}{\mathcal}
\newcommand{\mb}{\mathbb}
\newcommand{\E}{\mathbb{E}}
\newcommand{\R}{\mathbb{R}}
\newcommand{\I}{\textbf{1}_}
\renewcommand{\l}{\left}
\renewcommand{\r}{\right}
\theoremstyle{definition}
\newtheorem{defn}{Definition}[section]
\theoremstyle{remark}
\newtheorem{rmk}[defn]{Remark}
\theoremstyle{plain}
\newtheorem{lem}[defn]{Lemma}
\newtheorem{thm}[defn]{Theorem}
\newtheorem{cor}[defn]{Corollary}
\newtheorem{prop}[defn]{Proposition}
\numberwithin{equation}{section}
\begin{document}

\title[Explicit solutions to BSDEs and applications to utility maximization]{Explicit solutions to quadratic BSDEs and applications to utility maximization in multivariate affine stochastic volatility models}
\author[Anja Richter]{Anja Richter}
\address{ETH Z\"urich, Departement Mathematik, R\"amistrasse 101, 8092 Z\"urich, Switzerland}
\email{anja.richter@math.ethz.ch
}
\begin{abstract}
Over the past few years quadratic Backward Stochastic Differential Equations (BSDEs) have been a popular field of research. However there are only very few examples where explicit solutions for these equations are known. In this paper we consider a class of quadratic BSDEs involving affine processes and show that their solution can be reduced to solving a system of generalized Riccati ordinary differential equations. In other words we introduce a rich and flexible class of quadratic BSDEs which are analytically tractable, i.e.~explicit up to the solution of an ODE. Our results also provide analytically tractable solutions to the problem of utility maximization and indifference pricing in multivariate affine stochastic volatility models. This generalizes univariate results of Kallsen and Muhle-Karbe \cite{km08} and some results in the multivariate setting of Leippold and Trojani \cite{lt10} by establishing the full picture in the multivariate affine jump-diffusion setting. In particular we calculate the interesting quantity of the power utility indifference value of change of numeraire. Explicit examples in the Heston, Barndorff-Nielsen-Shephard and multivariate Heston setting are calculated.
\end{abstract}
\thanks{The author thanks Josef Teichmann and Peter Imkeller for their helpful comments.
The main part of this research can also be found in the author's PhD thesis. }
\keywords{quadratic BSDEs, affine processes, Wishart processes, utility maximization, stochastic volatility, explicit solution. 
\textit{AMS 2010:} 60H10, 60H30}

\maketitle 

\allowdisplaybreaks

\section{Introduction}

Since Bismut \cite{b73} introduced linear BSDEs in the context of Pontryagin's maximum principle, they have been intensively studied.
Their popularity stems from the fact that they can be applied to many different areas, e.g. in the study of properties of partial differential equations (PDEs), see Briand and Confortola \cite{BriandConfortola} and N'Zi et al. \cite{NziOuknineSulem}.
BSDEs also appear in many fields of mathematical finance, see El Karoui et al. 
\cite{ElKarouiPengQuenez} or more recently El Karoui and Hamad\`ene \cite{kh09} for an overview.
Problems such as pricing and hedging of European options (compare \cite{ElKarouiPengQuenez}), stochastic recursive utility (see \cite{de92}), utility maximization problems (e.g. \cite{HuImkellerMuller}) and risk measures (e.g. \cite{bk05,p97}) have been tackled using BSDE techniques.

For linear BSDEs it is possible under certain integrability and boundedness conditions to describe the first component of a solution as conditional expectation, compare \cite{ElKarouiPengQuenez} Section 2.
This is already not possible anymore in the Lipschitz case and therefore solutions and their properties can mostly be studied numerically.
In the meantime there is a huge literature on numerics for Lipschitz BSDEs, see \cite{glw05,bt04,bd07} amongst many others.
The present work focusses on BSDEs with drivers of quadratic growth which were first investigated by Kobylanski \cite{k00} in a Brownian setting and later extended to a continuous martingale setting by Morlais in \cite{Morlais1}.
Imposing certain growth and Lipschitz conditions on the generator and assuming bounded terminal conditions existence and uniqueness of quadratic growth BSDEs are guaranteed.
However there exists much less research on numerics for quadratic BSDEs and only very few examples where an explicit solution is known.

Motivated by this lack of examples and by the most important applications of this theory  
we analyze conditions under which one can find explicit solutions to a class of quadratic growth BSDEs. Our setting is as follows. We consider a forward affine process valued in $S_d^+$, the cone of positive semidefinite $d \times d$ matrices, and analyse BSDEs whose terminal condition and generator depend on this forward process. The main question from a BSDE-point of view addressed here is:
Which structural conditions on the terminal condition and the generator are needed (e.g. linear, affine, quadratic) to allow us to solve the BSDE explicitly?

We have chosen the forward process to be an affine process $X$ on $S_d^+$. As Kallsen and Muhle-Karbe \cite{km08} in the univariate case we can relate systems with $S_d^+$-valued forward processes to multivariate, realistically modelled utility optimization problems. Notice that affine processes have found a growing interest in the literature due to their analytic tractability which stems from the affine transform formula
\begin{align*}
  \E \l[ e^{-\operatorname{Tr}(X_t u)} \r]&=\exp(-\phi(t,u)-\operatorname{Tr}(\psi(t,u) X_0)),
\end{align*}
for all $t \in [0,T]$ and $u \in S_d^+$.
The functions $\phi$ and $\psi$ solve a system of generalized Riccati ordinary differential equations (ODEs), which are specified via the model parameters. Affine processes have been applied in various fields in mathematical finance such as the theory of term structure of interest rates, option pricing in stochastic volatility models and credit risk, see e.g. \cite{cir85,ds00,bns01,d05} and the references therein. Note that it is not necessary to look at matrix-valued affine processes as we do, but we could have equally chosen $\R^n_+ \times \R^m$, which was characterized by Duffie et al. \cite{dfs03}, or $ \R^n \times S_d^+$. For the sake of presentation we chose $ S_d^+ $ since this state space is complicated enough to make important pitfalls visible (e.g.~no infinite divisibility, no polyhedral property, etc, see \cite{cfmt09}), but still allows for simple notation. We emphasize that most of our results carry over to the general state space case.

The forward-backward system we consider consists of an affine process $X$ on $S_d^+$ and a BSDE whose terminal condition is an affine function of this process.
Moreover the generator is allowed to have a more involved structure including a dependence on $X$ and a quadratic dependence in the control process $Z$.
In Theorem \ref{thm:matrixBSDEsolution} we carry out which analytic form the generator and the terminal value of the BSDE need for the solution to be determined by a matrix ODE.
This ODE is a generalized Riccati ODE which may explode in finite time because of its quadratic term.
Therefore it is necessary to find conditions such that the ODE possesses a unique finite solution on the whole time interval $[0,T]$.

\medskip

We apply our results to the problem of maximizing expected utility of terminal wealth in multivariate affine stochastic volatility models.
This problem is typically approached either by stochastic control methods leading to Hamilton-Jacobi-Bellman equations or martingale methods which we will use here.
The martingale method has also been used in \cite{km08},  where the authors were able to solve the power utility optimization problem in several univariate affine models. They obtain the solution using semimartingale characteristics and represent the optimal strategies in terms of an opportunity process.
Using a combination of martingale methods and our results on explicit solutions of BSDEs we extend these ideas to higher dimensions.
In particular we derive 
 explicit results for power and exponential utility in multivariate extensions of the model of Heston \cite{h93} and the model of Barndorff-Nielsen and Shephard \cite{bns01}.
The multivariate results presented here are mostly new and provide a thorough extension of Fonseca et al. \cite{dig09}, where the power utility case in the multivariate Heston model is treated.
We also want to mention the work of Leippold and Trojani \cite{lt10} where in a multivariate setting the optimal strategy and the value function for power utility maximization is given. 
Their considerations are justified in our general affine setting.

A particularly interesting application of our findings is the
following: in the case of exponential utility we are able to provide
analytic expressions for the indifference prices of variance swaps, which is well-known and can also be found in the literature. In the case of power utility -- due to the additive structure of indifference prices -- one cannot find tractable expressions of those prices. However, the equally
interesting concept of \emph{indifference value of change of numeraire}
is again analytically tractable. The indifference value of change of
numeraire is the price one is willing to pay to swap one numeraire
with another one. This can have two applications: one is the case
where an institution actually bases their portfolio optimization, e.g., on fixed interest rates, even though interest rates are floating. The indifference value of change of numeraire is consequently the value of a swap
contract particularly designed for compensating this model
mispecification. The second one is a foreign exchange situation where
replacing one numeraire by another one influences the
optimal portfolio problem and therefore leads to an indifference
value. We can provide fully tractable formulas in all these cases, see
Section \ref{sec:heston}.

\medskip

In Section 2 we introduce necessary notation and collect several results by Cuchiero et al. \cite{cfmt09} who give a complete characterization of affine processes on $S_d^+$.
Chapter \ref{chap:explsol} studies explicit solutions to BSDEs and the connection with generalized matrix-valued Riccati ODEs.
The results are then applied to expected utility maximization in Section \ref{sec:stochvol}.

\section{Notation and characterization of affine processes}

We start with the notation we use subsequently.
We denote $\R_+=[0,\infty)$, $\R_-=(-\infty,0]$ and $\R_{++}=(0, \infty)$, $\R_{--}=(-\infty,0)$.
We write $\mathbb C$ for the space of complex numbers. 
The space $M_d$ stands for $d \times d$ matrices with real entries and $I_d$ is the $d \times d$-identity matrix.
$S_d$ is the space of symmetric $d \times d$-matrices equipped with the scalar product $\operatorname{\operatorname{Tr}}(xy)$.
This inner product naturally induces a norm $\parallel x\parallel= \sqrt{\operatorname{Tr}(xx)}$.
We write $S_d^+$ (or $S_d^-$) for the closed cone of symmetric $d \times d$ positive (or negative) semidefinite matrices and $S_d^{++}$ (or $S_d^{--}$) for the open cone of $d \times d$ positive (or negative) definite matrices.
By $\partial S_d^+ = S_d^+ \setminus S_d^{++}$ we denote the boundary of $S_d^+$.
The cones $S_d^+$ and $S_d^{++}$ induce a strict partial order relation on $S_d$.
We write $x \preceq y$ if $y-x \in S_d^+$ and $x \prec y$ if $y-x \in S_d^{++}$.
For $i,j=1, \ldots,d$, we also introduce the matrices $e^{ij} \in M_d$  with $e^{ij}_{ij}=1$ and the remaining entries being $0$.

The Borel $\sigma$-algebra on a space $U \subseteq S_d$ is denoted by $\mathcal B(U)$ and $bS_d^+$ refers to the set of bounded real-valued measurable functions $f$ on $S_d$. 
The vector space $\R^d$ is equipped with the Euclidean norm $|\cdot|$.
We work on the finite time interval $[0,T]$, where $T>0$ is fixed.

Let us consider time-homogeneous Markov processes $X$ with state space $S_d^+$ and semigroup $(P_t)_{t \in [0,T]}$ where
\begin{align*}
 P_t f(x) = \int_{S_d^+} f(\xi) p_t(x,d\xi),
\end{align*}
and $x \in S_d^+$, $f \in bS_d^+$ and $p_t$ a probability transition function.
We refer to \cite{rw94} for further details.
The process $X$ is not necessarily conservative.
To construct a conservative process $X$ we use the one-point compactification $S_d^+ \cup \Delta$ of $S_d^+$.
We correspondingly define
\begin{align*}
  p_t(x, \{\Delta\})= 1- p_t(x,S_d^+), \qquad p_t(\Delta,\{\Delta\})=1, 
\end{align*}
for all $t \in [0,T]$ and $x \in S_d^+$ with the convention that functions $f$ on $S_d^+$ are extended to $S_d^+ \cup \Delta$ by setting $f(\Delta)=0$ and $|\Delta|=\infty$.
Let us now define an affine process on $S_d^+$.

\begin{defn}[\cite{cfmt09} Definition 2.1]
\label{defn:affine}
 A Markov process $X$ is called affine if it satisfies the following conditions.
\begin{itemize}
 \item[(i)] $X$ is stochastically continuous, i.e. 
$$\lim_{s \to t} \int_{S_d^+} f(\xi)p_s(x,d\xi) = \int_{S_d^+} f(\xi) p_t(x, d\xi),$$ 
for all $f \in bS_d^+$ and every $t \in [0,T]$, $x \in S_d^+$.
\item[(ii)] The Laplace transform of $X$ depends in an exponential affine way on the initial state.
More precisely, there exist functions $\phi : [0,T] \times S_d^+ \to \R_+$ and $\psi : [0,T] \times S_d^+ \to S_d^+$ such that
\begin{align*}
 P_t e^{-\operatorname{Tr}(xu)}=\int_{S_d^+} e^{-\operatorname{Tr}(\xi u)} p_t(x,d\xi)=\exp(-\phi(t,u)-\operatorname{Tr}(\psi(t,u) x)),
\end{align*}
for all $t \in [0,T]$ and $u,x \in S_d^+$.
\end{itemize}
\end{defn}

In \cite{cfmt09} Theorem 2.4 the authors establish the Feller property for every affine process $X$ on $S_d^+$.
This permits a c\`adl\`ag representation for a given process $X$.
We may thus define the space 
$\Omega$ 
of all c\`adl\`ag paths $\omega: [0,T] \to S_d^+ \cup \Delta$ with $\omega(s)= \Delta$ whenever $\omega(t-)=\Delta$ or $\omega(t)=\Delta$ for $s > t$, $s,t \in [0,T]$.
 For every $x \in S_d^+$, $\mathbb P_x$ is the (unique) probability measure on $(\Omega, \bigvee_{t \in  [0,T]}\mathcal F^X_t )$ such that $\mathbb P_x (X_0=x)=1$, where $(\mathcal F^X_t)$ is the natural filtration generated by $X$.
We write $\mc F^{(x)}$ and respectively $(\mathcal F^{(x)}_t)$ for the completion of $\bigvee_{t \in  [0,T]}\mathcal F^X_t $ or $(\mathcal F^X_t)$  with respect to $\mathbb P_x$.
If we define
\begin{align*}
  \mathcal F_t = \bigcap_{x \in S_d^+} \mathcal F^{(x)}_t,\quad \mbox{and} \quad
 \mathcal F = \bigcap_{x \in S_d^+} \mathcal F^{(x)},
\end{align*}
then the filtration $(\mathcal F_t)$ is right continuous and $X$ is still a Markov process wrt.~$(\mathcal F_t)$ for $t \in  [0,T]$.
In the following we will write ``a.s.'' for ``$\mathbb P_x$-a.s. for all $x \in S_d^+$''. 
We call $X$ a semimartingale if $X_t \I{\{X_t \in S_d^+\}}$ is a semimartingale on $(\Omega, \mathcal F, (\mathcal F_t), \mathbb P_x )$ for all $x \in S_d^+$. 
For the definition of the characteristics of a semimartingale we refer to \cite{js87} Section II.2.
Note that semimartingale characteristics are always given wrt.~a truncation function $\chi: S_d \to S_d$ which is a continuous bounded function such that $\chi(\xi)=\xi$ for $\xi$ in a neighborhood of $0$.

\begin{defn}
\label{defn:admparset}
  We call $(\alpha,b, \beta^{ij},m,\mu,\iota,\gamma)$ an admissible parameter set associated to a truncation function $\chi$ if it satisfies the following conditions.
\begin{itemize}
  \item[(i)] The linear diffusion coefficient $\alpha$ belongs to the cone $S_d^+$.
  \item[(ii)] The constant drift term $b$ is such that $b \succeq (d-1) \alpha$.
  \item[(iii)] The constant jump term $m$ is a Borel measure on $\mathcal B( S_d^+ \setminus \{0\})$ satisfying
\begin{align}
\label{eqn:integrabilitym}
\int_{S_d^+ \setminus \{0\}} (\parallel \xi \parallel \wedge 1) m (d\xi) < \infty.
\end{align}
  \item[(iv)] The linear jump term consists of a $d \times d $-matrix $\mu=(\mu_{ij})$ of finite signed measures on $\mathcal B(S_d^+ \setminus \{0\})$ such that $\mu(E) \in S_d^+$ for all $E \in \mathcal B(S_d^+ \setminus \{0\})$.
The kernel
\begin{align}
\label{eqn:M(x,dxi)}
  M(x,d\xi)= \frac{\operatorname{Tr}(x \mu(d\xi))}{\parallel \xi \parallel^2 \wedge 1},
\end{align}
satisfies
\begin{align}
  \label{eqn:integrabilityM}
\int_{S_d^+ \setminus \{0\}} \operatorname{Tr}(\chi(\xi )u) M(x,d\xi) < \infty, \quad \mbox{ for all } x,u \in S_d^+ \mbox{ with } \operatorname{Tr}(xu)=0.
\end{align}
\item[(v)] The linear drift coefficient is composed of a family $(\beta^{ij})_{i,j=1,\ldots,d}$ of symmetric matrices with $\beta^{ij}=\beta^{ji} \in S_d$ for all $i,j=1,\ldots,d,$ and such that the linear map $B: S_d \to S_d$ with
\begin{align}
\label{eqn:B(x)}
 B(x) & = \sum_{i,j=1}^d \beta^{ij} x_{ij} ,
\end{align}
fulfills
\begin{align}
\label{eqn:lindrift}
 \operatorname{Tr}(B(x)u) - \int_{S_d^+ \setminus \{0\}} \operatorname{Tr}(\chi(\xi )u) M(x,d\xi) \geq 0 ,
\end{align}
for all $x,u \in S_d^+$ with $\operatorname{Tr}(xu)=0$.
\item[(vi)] The constant killing rate coefficient $\iota$ has values in $\R_+$.
\item[(vii)] The linear killing rate coefficient $\gamma$ has values in $S_d^+$.
\end{itemize}
\end{defn}

A full characterization of affine processes on $S_d^+$ is given in \cite{cfmt09}.

\begin{thm}[\cite{cfmt09} Theorem 2.4]
\label{thm:charaffineproc}
Let $X$ be an affine process on $S_d^+$. 
Then there exists an admissible parameter set $(\alpha,b,\beta^{ij},m,\mu, \iota, \gamma)$ wrt.~a truncation function $\chi$ such that 
the functions $\phi$ and $\psi$ from Definition \ref{defn:affine} (ii) solve the generalized Riccati ODE
\begin{align}
\label{eqn:odephi1}  
\frac{\partial \phi(t,u)}{\partial t}
&= \mathscr F(\psi(t,u)), \quad \phi(0,u)=0,
\\
\label{eqn:odepsi1}
  \frac{\partial \psi(t,u)}{\partial t}
&= \mathscr R(\psi(t,u)), \quad \psi(0,u)=u \in S_d^+,
\end{align}
with
\begin{align*}
  \mathscr F(u)&= \operatorname{Tr}(bu) + \iota - \int_{S_d^+ \setminus \{0\}} (e^{-\operatorname{Tr}(u\xi)} -1) m(d\xi),
\\
\mathscr R(u)&= -2u\alpha u + B^{*}(u)+\gamma-\int_{S_d^+ \setminus \{0\}} \frac{e^{-\operatorname{Tr}(u\xi)}-1+\operatorname{Tr}(\chi(\xi)u)}{\parallel \xi \parallel^2 \wedge 1} \mu(d\xi),
\end{align*}
where $B^*_{ij}(u)=\operatorname{Tr}(\beta^{ij}u)$ for $i,j=1, \ldots,d$.

Conversely, let $(\alpha, b, \beta^{ij},m,\mu,\iota,\gamma)$ be an admissible parameter set associated to a truncation function $\chi$.
Then there exists a unique affine process on $S_d^+$ 
and the condition of Definition \ref{defn:affine} (ii) holds for all $(t,u) \in [0,T] \times S_d^+$, where $\phi$ and $\psi$ are given by \eqref{eqn:odephi1} and \eqref{eqn:odepsi1}.
\end{thm}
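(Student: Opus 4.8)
The plan is to establish the two implications separately, following the strategy behind the classical affine characterizations (Duffie, Filipovi\'c and Schachermayer \cite{dfs03} in the state space $\R^n_+\times\R^m$, adapted here to the cone $S_d^+$).

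\emph{Necessity.} Fix $u\in S_d^+$ and write $f_u(x)=e^{-\operatorname{Tr}(xu)}$. Applying the semigroup identity $P_{t+s}=P_tP_s$ to $f_u$ and comparing with the exponential-affine form in Definition~\ref{defn:affine}~(ii) yields the semiflow relations
\[
\psi(t+s,u)=\psi\bigl(t,\psi(s,u)\bigr),\qquad \phi(t+s,u)=\phi(s,u)+\phi\bigl(t,\psi(s,u)\bigr),
\]
for all $s,t\ge0$ with $s+t\le T$. The first genuine difficulty is \emph{regularity}: one has to upgrade stochastic continuity of $X$ to differentiability of $t\mapsto\phi(t,u)$ and $t\mapsto\psi(t,u)$. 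Following \cite{dfs03}, stochastic continuity forces $\phi(t,u)\to0$ and $\psi(t,u)\to u$ as $t\downarrow0$, locally uniformly in $u$; integrating the semiflow relations in time and using continuity of the integrand produces one-sided derivatives at $t=0$, and the semiflow relations then propagate differentiability to all of $[0,T)$. Putting $\mathscr F(u)=\partial_t\phi(t,u)|_{t=0}$ and $\mathscr R(u)=\partial_t\psi(t,u)|_{t=0}$ and differentiating the semiflow relations at $t=0$ produces exactly \eqref{eqn:odephi1}--\eqref{eqn:odepsi1}.

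\emph{Functional form and admissibility.} It remains to identify $\mathscr F$ and $\mathscr R$ and to read off the conditions of Definition~\ref{defn:admparset}. Since $X$ is Feller it possesses a generator whose action on the exponentials $f_u$ is, by the affine Laplace transform, affine in $x$; writing this action in terms of the semimartingale characteristics of $X$ (which are themselves forced to be affine in the state) yields the L\'evy--Khintchine-type expression for $\mathscr F$ and the matrix Riccati form for $\mathscr R$, with their diffusion, linear-drift, killing and jump components. The conditions (i)--(vii), in particular the drift bound $b\succeq(d-1)\alpha$ and the inward-pointing/integrability conditions \eqref{eqn:integrabilityM} and \eqref{eqn:lindrift} on $\partial S_d^+$, are precisely what is needed for the pair $(\mathscr F,\mathscr R)$ to generate a flow staying in $S_d^+$; they are extracted by testing the generator at boundary points $x,u$ with $\operatorname{Tr}(xu)=0$ and demanding non-negativity of the relevant rates.

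\emph{Sufficiency.} Conversely, given an admissible parameter set one first solves \eqref{eqn:odepsi1}: the field $\mathscr R$ is locally Lipschitz on $S_d^+$, hence a unique local solution exists; admissibility makes $S_d^+$ invariant (a boundary-invariance argument of Nagumo type), and since the only quadratic term $-2u\alpha u$ is dissipative while the remaining terms grow at most linearly on $S_d^+$, the solution cannot explode and is global on $[0,T]$; $\phi$ is then recovered by integrating $\mathscr F(\psi(\cdot,u))$. Defining $p_t(x,\cdot)$ through $\int_{S_d^+}e^{-\operatorname{Tr}(\xi u)}p_t(x,d\xi)=\exp(-\phi(t,u)-\operatorname{Tr}(\psi(t,u)x))$, the semiflow property of the ODE solutions yields the Chapman--Kolmogorov equations, and one checks that $u\mapsto\exp(-\phi(t,u)-\operatorname{Tr}(\psi(t,u)x))$ really is the Laplace transform of a sub-probability measure on $S_d^+$ (positivity via a complete-monotonicity/positive-definiteness argument on the exponent, continuity, normalization at $u=0$), so that $(P_t)$ is a positive contraction Feller semigroup; the associated Markov process is the desired affine process, and uniqueness follows because its one-dimensional marginals, hence its law, are pinned down by $\phi$ and $\psi$. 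I expect the two truly delicate points to be the regularity step in the necessity direction and, in the sufficiency direction, the combination of global (non-explosion) solvability of the Riccati ODE with the verification that the resulting exponential-affine family is genuinely a Laplace transform.
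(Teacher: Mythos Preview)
The paper does not give its own proof of this theorem; it is quoted verbatim as Theorem~2.4 of \cite{cfmt09} and used as a black box. So there is nothing to compare your proposal against in the present paper.

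That said, your sketch follows the overall architecture of \cite{cfmt09} reasonably well, but two points are understated in a way that matters. First, the regularity step is the heart of the necessity direction and is considerably harder than ``integrating the semiflow relations and using continuity'': in \cite{dfs03} regularity was \emph{assumed}, not derived, and proving it from stochastic continuity alone on $S_d^+$ is one of the main technical contributions of \cite{cfmt09}; your one-line description would not actually produce the derivative. Second, in the sufficiency direction you assert that $\mathscr R$ is locally Lipschitz on $S_d^+$, but this can fail at the boundary $\partial S_d^+$ because of the jump integral (cf.\ \cite{dfs03}, Example~9.3, and the remark preceding Theorem~\ref{prop:existencematrixode} in the present paper); the invariance and global existence argument therefore requires more care than a straightforward Nagumo-plus-Gronwall step.
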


Every conservative affine process on $S_d^+$ with killing rate coefficients $\iota=\gamma=0$ is a semimartingale.

\begin{thm}[\cite{cfmt09} Theorem 2.6]
Let $X$ be a conservative affine process on $S_d^+$ and $(\alpha,b, \beta^{ij},m,\mu,0,0)$ the related admissible parameter set associated to a truncation function $\chi$.
Then $X$ is a semimartingale whose characteristics $(D,A,\nu)$ with respect to $\chi$ are given by
\begin{align*}
D_t &= \int_0^t \left( b + \int_{S_d^+ \setminus \{0\}} \chi(\xi) m (d\xi) +B(X_s) \right) ds,
\\
  A_{t,ijkl} & = \int_0^t A_{ijkl}(X_s) ds,
\\
\nu([0,t],G) & = \int_0^t (m(G) + M(X_s,G))ds,
\end{align*}
for $i,j,k,l \in \{1, \ldots , d\}$, $t \in [0,T]$ and $G \in \mathcal B(S_d^+ \setminus \{0\})$.
The matrix $B$ is given by \eqref{eqn:B(x)}, $M$ by \eqref{eqn:M(x,dxi)} and $A_{ijkl}$ by
\begin{align}
\label{eqn:Aijkl}
A_{ijkl}(x)= x_{ik}\alpha_{jl}+ x_{il}\alpha_{jk} + x_{jk} \alpha_{il} + x_{jl} \alpha_{ik} , 
\end{align}
for all $i,j,k,l \in \{1, \ldots , d\}$, and $x \in S_d^+$.
Moreover there exists a $d \times d$ matrix of standard Brownian motions $W$ such that $X$ has the following canonical representation
\begin{align}
\label{representationofx}
 X_t & =  x + \int_0^t \sqrt{X_s} dW_s \Sigma + \int_0^t \Sigma^{\top} dW^{\top}_s \sqrt{X_s} 
\\ \nonumber
& \quad + \int_0^t \left( b + B(X_s) + \int_{S_d^+ \setminus \{0\} } \chi(\xi) m(d\xi)\right) ds 
\\  \nonumber
& \quad + \int_0^t \int_{S_d^+ \setminus \{0\} } \chi(\xi) \left( \mu^X (ds,d\xi) - \nu(ds,d\xi) \right)
\\ \nonumber
& \quad
+ \int_0^t \int_{S_d^+ \setminus \{0\} } \left( \xi - \chi(\xi)\right) \mu^X (ds,d\xi),
\end{align}
where $\Sigma \in M_d$ satisfies $\Sigma \Sigma^{\top}= \alpha$ and $\mu^X$ denotes the random measure associated with the jumps of $X$.
\end{thm}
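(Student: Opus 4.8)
We content ourselves with sketching the argument and refer to \cite{cfmt09} for the details. The plan is to read off all three characteristics of $X$ from its infinitesimal generator, which in turn is computed from the affine transform formula of Definition~\ref{defn:affine}(ii) together with the generalized Riccati ODEs of Theorem~\ref{thm:charaffineproc}. For $u\in S_d^+$ put $f_u(x)=e^{-\operatorname{Tr}(xu)}$. Differentiating $P_tf_u(x)=\exp(-\phi(t,u)-\operatorname{Tr}(\psi(t,u)x))$ at $t=0$ and using $\partial_t\phi(0,u)=\mathscr F(u)$, $\partial_t\psi(0,u)=\mathscr R(u)$ and $\iota=\gamma=0$ gives
\begin{align*}
\mathcal A f_u(x)=-\bigl(\mathscr F(u)+\operatorname{Tr}(\mathscr R(u)x)\bigr)f_u(x).
\end{align*}
Inserting the explicit forms of $\mathscr F$ and $\mathscr R$, using $M(x,d\xi)=\operatorname{Tr}(x\mu(d\xi))/(\parallel\xi\parallel^{2}\wedge 1)$ from \eqref{eqn:M(x,dxi)}, the identity $\operatorname{Tr}(B^{*}(u)x)=\operatorname{Tr}(B(x)u)$ with $B$ as in \eqref{eqn:B(x)}, and the algebraic identity $\sum_{i,j,k,l}A_{ijkl}(x)\,u_{ij}u_{kl}=4\operatorname{Tr}(u\alpha u x)$ for $A_{ijkl}$ as in \eqref{eqn:Aijkl}, one recognizes the right-hand side as the action on $f_u$ of the integro-differential operator with drift $b+\int\chi(\xi)m(d\xi)+B(x)$, second-order coefficients $A_{ijkl}(x)$, and jump kernel $m(d\xi)+M(x,d\xi)$; the constant term $\int\chi(\xi)m(d\xi)$ in the drift is precisely the compensator of the truncated small jumps coming from $m$.

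Since $X$ is Feller by \cite{cfmt09}, the exponentials $\{f_u:u\in S_d^+\}$ are rich enough to determine the generator on a core, so $\mathcal A$ agrees with the operator just identified on a domain that, after localisation near $\partial S_d^+$, contains the coordinate maps $x\mapsto x_{ij}$ and the quadratic maps $x\mapsto x_{ij}x_{kl}$. Dynkin's formula applied to $x\mapsto x_{ij}$ then shows that $X_t-\int_0^t\bigl(b+\int\chi(\xi)m(d\xi)+B(X_s)\bigr)ds$ is a local martingale, so $X$ is a semimartingale (as also recorded in the sentence preceding the statement) with first characteristic $D_t$ as claimed. Feeding the quadratic maps and the $f_u$'s into $\mathcal A$ isolates, respectively, the modified second characteristic $A_{t,ijkl}=\int_0^tA_{ijkl}(X_s)\,ds$ and the compensator $\nu([0,t],G)=\int_0^t(m(G)+M(X_s,G))\,ds$ of the jump measure $\mu^X$ of $X$.

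For the canonical representation we use the standard semimartingale decomposition
\begin{align*}
X_t=x+X^c_t+\int_0^t\!\!\int\chi(\xi)(\mu^X-\nu)(ds,d\xi)+\int_0^t\!\!\int(\xi-\chi(\xi))\mu^X(ds,d\xi)+D_t,
\end{align*}
whose last three summands are exactly those appearing in the statement. It then remains to represent the continuous local martingale part $X^c$, which is $S_d$-valued with $\langle X^c_{ij},X^c_{kl}\rangle_t=\int_0^tA_{ijkl}(X_s)\,ds$. A direct computation of quadratic covariations shows that, for a suitable square root $\Sigma$ of $\alpha$ (so that $\Sigma\Sigma^{\top}=\alpha$) and a $d\times d$ matrix $B$ of independent standard Brownian motions, the process $\int_0^\cdot\sqrt{X_s}\,dB_s\Sigma+\int_0^\cdot\Sigma^{\top}dB_s^{\top}\sqrt{X_s}$ has precisely the covariation structure $A_{ijkl}(X)$; a matrix-valued martingale representation argument, carried out on a possibly enlarged stochastic basis, then produces a Brownian motion $W$ for which $X^c$ takes exactly this form, completing the representation.

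The main obstacle is this last step. On $\partial S_d^+$ the square root $\sqrt{X_s}$ drops rank, so the state-dependent linear map $H\mapsto\sqrt{X_s}H\Sigma+\Sigma^{\top}H^{\top}\sqrt{X_s}$ from $M_d$ to $S_d$ fails to be invertible and one cannot simply solve for the Brownian increments in terms of $dX^c$. One therefore has to enlarge the probability space, fill the kernel of this map with an independent Brownian motion, and verify both that the reconstructed $W$ is a genuine matrix Brownian motion and that it reproduces $X^c$. The admissibility conditions (ii) and (v) of Definition~\ref{defn:admparset} --- which keep $X$ inside $S_d^+$ and control the degeneracy of the diffusion on the boundary --- are exactly what make this reconstruction go through. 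By contrast, the identification of the three characteristics in the first two paragraphs is, once the generator is known, essentially bookkeeping.
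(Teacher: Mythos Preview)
The paper does not prove this theorem; it is quoted verbatim as Theorem~2.6 of \cite{cfmt09}, and the only justification given in the paper is the subsequent remark that the canonical representation follows from the canonical semimartingale representation (\cite{js87}, Theorem~II.2.34) together with the construction of a matrix-valued Brownian motion matching the covariation of $X$. Your sketch follows exactly this route---identifying the characteristics from the generator, invoking the canonical decomposition, and then building $W$ via a martingale-representation argument on an enlarged space---so it is fully consistent with what the paper indicates and with the proof in \cite{cfmt09}.
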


 \begin{rmk}
The canonical representation \eqref{representationofx} follows via the canonical semimartingale representation (see \cite{js87} Theorem II.2.34) and the construction of a matrix-valued Brownian motion.
For the latter one has to find a matrix which coincides with the covariation of the affine process.

Note that the constant drift term of an affine semimartingale is independent of the truncation function $\chi$ while $\chi$ influences the linear drift coefficient $B$.
 \end{rmk}

From now on we fix a truncation function $\chi$, and then write ``admissible parameter set'' for ``admissible parameter set associated to truncation function $\chi$''.
The affine process $X$ with admissible parameter set $(\alpha,b, \beta^{ij},m,\mu,\iota, \gamma)$ is continuous if and only if $m$ and $\mu$ vanish, i.e. $(\alpha,b,\beta^{ij},0,0,\iota, \gamma)$.
Since we only consider affine semimartingales we write $(\alpha,b,\beta^{ij},m,\mu)$ for $(\alpha,b,\beta^{ij},m,\mu,0,0)$.

\bigskip

To fix ideas let us give an example of a matrix-valued affine processes, the Wishart processes.
These processes were first rigorously studied in \cite{b91} extending squares of matrix Ornstein-Uhlenbeck processes.
The dynamics of a Wishart process satisfies
\begin{align}
\label{eqn:wishart}
dX_t&= (b+ HX_t + X_tH^{\top})dt + \sqrt{X_t} dW_t \Sigma + \Sigma^{\top} dW^{\top}_t \sqrt{X_t},
\end{align}
where $b, H, \Sigma \in M_d$ and $W$ is a $d\times d$ matrix Brownian motion.
These processes have been widely used to model stochastic covariation, see e.g. \cite{bpt10}, \cite{dgt07} and \cite{dgt08}.
In order to obtain a well defnned matrix volatility process Bru required the constant drift part $b = k \Sigma^{\top}\Sigma$ for some $k > d-1$. 
Then $X$ has a Wishart distribution.
In the above notation the admissible parameter set for the Wishart process is $(\Sigma^{\top}\Sigma,b, \beta^{ij},0,0)$ with $B(x)=Hx+xH^{\top}$.

In contrast to affine processes on the state space $\R^m_+ \times \R^n$, which were fully characterized in \cite{dfs03}, and where the diffusion term consists of a constant and linear part, the diffusion term of an affine process on $S_d^+$ with admissible parameter set $(\alpha,b,\beta^{ij},m,\mu)$ only allows for a linear part of the specific form
\begin{align}
\label{eqn:A}
\sum_{i,j,k,l=1}^d u_{ij}A_{ijkl}(x)u_{kl}=4\operatorname{Tr}(x u \alpha u), \quad x,u \in S_d^+.
\end{align}
Note that the necessity and sufficiency of conditions $(ii)$ and $(v)$ in definition \ref{defn:admparset} was first shown in \cite{cfmt09}.
In particular formula \eqref{eqn:B(x)} allows for a more general form than $B(x)=Hx+xH^{\top}$, $x \in S_d^+$, compare also \cite{cfmt09}, Chapter 2.1.2.
\section{Explicit solutions of quadratic FBSDEs}
\label{chap:explsol}


In this section we examine how the solutions for a class of quadratic BSDEs can be reduced to solving ODEs.
In contrast to many existence results in the literature, e.g. \cite{pp90,k00,m08} and \cite{b06}, where the generator $f$ is usually required to satisfy certain Lipschitz and growth conditions, we suggest an analytic expression for $f$ which gives the problem extra structure.
Consider the following motivating example where such a form appears naturally.

Take the one-dimensional Heston model (compare \cite{h93}) for the dynamics of an asset $H$.
The stochastic logarithm $N$ of $H$ 
satisfies
\begin{align*}
  dN_t&= \eta R_t dt + \sqrt{R_t} dQ_t,
\\
  dR_t&= (b + \lambda R_t) dt + \sigma \sqrt{R_t} dW_t, \quad t \in [0,T].
\end{align*}
Here $R$ is the stochastic volatility process, $b, \sigma >0$, $\eta,\lambda \in \R$ are constants and $Q$, $W$ are two Brownian motions with constant correlation $\rho \in[-1,1]$.
The volatility process $R$ satisfies our definition of an affine process on $\R_+$ with admissible parameter set $(\frac{1}{4} \sigma^2,b,\lambda,0,0)$.
We study an investor who is interested in maximizing their expected utility from terminal wealth.
The investor's initial capital is denoted by $x \in \R$ and their trading strategies are deterministic functions $\pi$ of time, where $\pi(t)$ describes the amount of money invested in stock $H$ at time $t \in [0,T]$.
The wealth process $X^{x, \pi}$ for initial endowment $x$ and strategy $\pi$ is given by 
$$X^{x, \pi}_t=x+ \int_0^t \frac{\pi(s)}{H_s} dH_s =x + \int_0^t \pi(s) dN_s,$$ 
for $t \in [0,T]$.
We can solve the exponential utility maximization problem
\begin{align*}
  V(x)= \sup_{\pi} \E \left[ - \exp \left( - \gamma X_T^{x, \pi}\right)\right],
\quad x \in \R, \quad \gamma >0,
\end{align*}
by finding the generator $f$ of the BSDE
\begin{align}
\label{eqn:hest1}  
Y_t&=0 - \int_t^T Z_s dW_s + \int_t^T f(R_s,Z_s) ds,
\end{align}
such that the process $L^\pi_t=- \exp(-\gamma (X_t^{x, \pi} +Y_t))$, $t \in [0,T]$, is a supermartingale for all strategies $\pi$ and a martingale for a particular strategy $\pi^{opt}$.
The required generator can be shown (see Lemma \ref{lemsupermartexp}) to be
\begin{align}
\label{eqn:genex}
  f(r,z)= \frac{\gamma}{2} (\rho^2 - 1) z^2 + \frac{1}{2 \gamma^3} \eta^2 r -\frac{1}{\gamma} \eta \rho z \sqrt{r}, \quad (r,z) \in \R^2,
\end{align}
similarly to \cite{HuImkellerMuller} Theorem 7.
Notice that the generator is quadratic in the $z$-component.
In order to solve this BSDE we apply the It\^o formula to an affine function of $R$.
More precisely we make an affine ansatz for $Y_t=\Gamma(t)R_t + w(t)$, $t \in [0,T]$, where $\Gamma,w:[0,T] \to \R$ are differentiable functions.
This leads to
\begin{align}
\label{eqn:hest2} 
  \Gamma(t)R_t + w(t) 
&= \Gamma(T)R_T + w(T) 
- \int_t^T \Gamma(s) \sigma \sqrt{R_s} dW_s 
\\ \nonumber
& \quad 
- \int_t^T \l(\Gamma(s)(b+ \lambda R_s) + \frac{d \Gamma(s)}{ds} R_s + \frac{d w(s)}{ds} \r) ds.
\end{align}
It can be immediately read off the equation that $\Gamma$ and $w$ must satisfy 
\begin{align}
\label{eqn:hest3}
  \Gamma(T)=w(T)=0 \mbox{ and } Z_s=\Gamma(s) \sigma \sqrt{R_s}, \quad s \in [0,T].
\end{align}
The finite variation parts of \eqref{eqn:hest1} and \eqref{eqn:hest2} coincide if for all $s \in [0,T]$
\begin{align*}
  0&=f(R_s,Z_s) + \Gamma(s)(b+ \lambda R_s) + \frac{d \Gamma(s)}{ds} R_s + \frac{d w(s)}{ds}
\\
& = 
 \frac{\gamma}{2} (\rho^2 - 1) \Gamma^2(s) \sigma^2 R_s 
+ \frac{1}{2 \gamma^3} \eta^2 R_s 
-\frac{1}{\gamma} \eta \rho \Gamma(s) \sigma R_s 
+ \Gamma(s)(b+ \lambda R_s) + \frac{d \Gamma(s)}{ds} R_s + \frac{d w(s)}{ds},
\end{align*}
where we have used the equation for the generator $f$ and formula \eqref{eqn:hest3} for $Z$.
Equating coefficients this leads to an ODE of Riccati type
\begin{align*}
  -\frac{d \Gamma(t)}{d t} &= 
q \Gamma^2(t)  + l \Gamma(t) + c, \quad \Gamma(T)=0, \quad t \in [0,T],
\end{align*}
with constants
\begin{align*}
  q=\frac{\gamma}{2} \sigma^2 (\rho^2 - 1), \quad 
l=\lambda - \frac{1}{\gamma} \sigma \rho \eta, \quad
c=\frac{1}{2 \gamma^3} \eta^2,
\end{align*}
and an ODE of the simpler form
\begin{align*}
   -\frac{d w(t)}{d t}&= \Gamma(t)b    ,\quad w(T)=0, \quad t \in [0,T].
\end{align*}
Hence the solution of \eqref{eqn:hest1} is
\begin{align*}
  Y_t&= \Gamma(t) R_t + \int_t^T b \Gamma(s) ds 
\\
 Z_t&= \Gamma(t) \sigma \sqrt{R_t}, \quad t \in [0,T].
\end{align*}
Generally, Riccati ODEs have the property that their solution can blow up in finite time, however our model parameter choices admit a non-explosive solution.
In the one-dimensional case considered here we can even give an fully explicit solution, compare \cite{b01} Section 21.5.1.2.
More specifically we distinguish two different cases depending on the value of
\begin{align*}
  d=l^2-4qc = \left( \lambda - \frac{1}{\gamma} \sigma \eta \rho \right)^2 + \frac{1}{\gamma^2} \sigma^2 \eta^2(1- \rho^2) \geq 0.
\end{align*}
If $d>0$, then
\begin{align*}
 \Gamma(t)= -2c \frac{e^{\sqrt{d} (T-t)} -1}{e^{\sqrt{d} (T-t)}(l + \sqrt{d}) - l +\sqrt{d}}
, \quad t \in [0,T].
\end{align*}
If $d=0$, then $\rho=1$, $\lambda=\frac{1}{\gamma} \sigma \eta$ and hence
\begin{align*}
  \Gamma(t)=\frac{1}{2\gamma^3} \eta^2 (T-t), \quad t \in [0,T].
\end{align*}
In both cases the martingale property of $-\exp(- \gamma (X^{x,\pi^{opt}} +Y))$ then gives the value function and the optimal strategy
\begin{align*}
  V(x)&=-\exp \left(-\gamma \left(x+ \Gamma(0)R_0 + \int_0^T b\Gamma(s) ds \right) \right),
\\
  \pi^{opt}(t)&=\frac{1}{\gamma^2} \eta - \Gamma(t) \sigma \rho,
\end{align*}
for $x \in \R$, $t \in [0,T]$.

\bigskip

In the previous example the ansatz $Y_t=\Gamma(t)R_t + w(t)$ and the method of equating coefficients enabled us to reduce the solution of a BSDE to solving ODEs.
Now that we have seen how we exploit the affine structure in an one-dimensional example, we generalize this to BSDEs depending on affine processes on $S_d^+$ and even an additional process which has affine semimartingale characteristics with respect to the affine process.
The question is how general we are allowed to choose the generator and the terminal condition in order to still be able to apply the above method.

We associate the affine process $X$ with admissible parameter set $(\alpha,b, \beta^{ij},m,\mu)$ to a BSDE.
To allow for a more flexible financial modeling, especially in view of pricing of variance swaps, the BSDE will moreover depend on 
the matrix-valued process
\begin{align}
\label{eqn:hatN}  
dO_t =  \sigma(t) \sqrt{X_t}  d\hat Q_t  + \left( o_1 (t) + o_2 (t) X_t \right) dt, \quad t \in [0,T],
\end{align}
where $o_1, o_2 : [0,T] \to M_d$ and $ \sigma : [0,T] \to M_d$ are continuous functions.
Here the process $\hat Q$ denotes a $d \times d$ matrix-valued Brownian motion.
This process will enable us to calculate indifference prices and delta hedges for variance swaps, see e.g. Section \ref{sec:contexput}.

Our (nonstandard) real-valued BSDE will have the following form 
\begin{align}
\label{defBSDE}
Y_t & =  
F(X_T,O_T) 
- \int_t^T \operatorname{Tr}(Z_s^{\top} dW_s) 
- \int_t^T \operatorname{Tr}(\hat Z_s^{\top} d\hat Q_s) 
\\ \nonumber
& \quad
- \int_0^t \int_{S_d^+ \setminus \{0\} } K_s(\xi) \left( \mu^X (ds,d\xi) - \nu(ds,d\xi) \right)
\\
& \quad 
+ \int_t^T f(s,X_s,Y_s,Z_s, \hat Z_s ,K_s) ds,
\nonumber
\end{align}
for $t \in [0,T]$, where the terminal condition $F$ is allowed to depend on the affine process $X$ and on the process $O$.
Recall that $W$ is the Brownian motion of the underlying affine process $X$ and the generator is a deterministic Borel measurable function $f: [0,T] \times S_d^+ \times \R \times M_d \times M_d \times \R \to \R
$.

\begin{defn}
  A solution to BSDE \eqref{defBSDE} is a family of processes $(Y,Z, \hat Z,K) \in \R \times M_d \times M_d \times \R$ such that:
\begin{itemize}
  \item[(i)] The equation \eqref{defBSDE} is a.s. satisfied.
  \item[(ii)] The stochastic integrals $Z^{\top} \cdot W$, $\hat Z^{\top} \cdot \hat Q$ are well-defined.
  \item[(iii)] The integrability condition $\int_0^T |f(s,X_s,Y_s,Z_s, \hat Z_s ,K_s)| ds < \infty$ holds true.
  \item[(iv)] The mapping $t \mapsto Y_t$ is c\`adl\`ag.
  \item[(v)] The process $K$ satisfies $\int_0^T \int_{S_d^+ \setminus \{0\}} |K_t(\xi)| (m(d\xi)+M(X_t,d\xi))dt  < \infty.$
\end{itemize}
\end{defn}
For a certain class of generators and terminal conditions we can give the solution to the above BSDE in terms of matrix ODEs.
Suppose the terminal condition $F:S_d^+ \times M_d \to \R$ is affine, more precisely 
\begin{align}
\label{eqn:terminalcondition}
 F(x, o)=\operatorname{Tr}(ux)
+ \operatorname{Tr}(a o ) + v, \quad x \in S_d^+, \ o \in M_d,
\end{align}
where $u \in S_d$, 
$ a \in M_d$ and $v \in \R$.
Let us define the set $L^0$ as the space which contains all functions $k:S_d^+ \to \R$.
The class of generators $f$ is more involved, more precisely the generator $f:[0,T] \times S_d^+ \times \R \times M_d \times M_d \times L^0 \to \R$ is allowed to have the following form
\begin{align}
\label{matrixgenerator}
& f(t,x,y,z,\hat z, k)
\\ \nonumber
&= 
\operatorname{Tr}(z c_{zz}(t) z^{\top}) 
+ \operatorname{Tr}( z c_{z\sqrt{x}}(t) \sqrt{x})  
+  \operatorname{Tr}( c_x(t) x)
+ c_y(t)y 
+c_t(t) 
\\ \nonumber
& \quad
+ \operatorname{Tr}(\hat z c_{\hat z \hat z}(t) \hat z^{\top})
+ \operatorname{Tr}(\hat z c_{\hat z z}(t) z^{\top})
+\operatorname{Tr}(\hat z c_{\hat z \sqrt{x}}(t) \sqrt{x})
\\ \nonumber
& \quad  
+ \int_{S_d^+ \setminus \{0\}} g_M(t,k(\xi)) M(x,d\xi)
\\ \nonumber
& \quad 
+ \int_{S_d^+ \setminus \{0\}}
\Big( \operatorname{Tr}( z g_{z\sqrt{x}}(t,k(\xi)) \sqrt{x}) 
+\operatorname{Tr}(x g_{x}(t,k(\xi))) 
+ g_t(t,k(\xi)) 
+  y g_{y}(t,k(\xi) 
\Big) m(d\xi)  \nonumber
\\ \nonumber
& \quad
+  \int_{S_d^+ \setminus \{0\}} \left(
\hat z g_{\hat z \hat z}(t,k(\xi)) \hat z^{\top})
+ \operatorname{Tr}(\hat z g_{\hat z z}(t,k(\xi)) z^{\top})
+ \operatorname{Tr}(\hat z g_{\hat z \sqrt{x}}(t,k(\xi)) \sqrt{x})
\right) m(d\xi),
\end{align}
for all $(t,x,y,z,\hat z,k) \in [0,T] \times S_d^+ \times \R \times M_d \times M_d \times L^0$.
In the above,
\begin{align*}
 c_{zz},c_{z \sqrt{x}},c_{x}, c_{\hat z \hat z},c_{\hat z z},c_{\hat z \sqrt x}: [0,T] &\to M_d,
\\
 c_t,c_y:[0,T] &\to \R,
\end{align*}
are continuous functions and $g_M:[0,T] \times L^0 \to \R$ is an $M(x,d\xi)$-integrable function, $x \in S_d^+$, which is continuous in time.
Finally
\begin{align*}
 g_{z \sqrt{x}},g_{x}, g_{\hat z \hat z},g_{\hat z z},g_{\hat z \sqrt x}: [0,T] \times L^0 &\to M_d, 
\\
 g_t,g_y
:[0,T] \times L^0 &\to \R,
\end{align*}
are $m(d\xi)$-integrable functions which are also assumed continuous in time.

\begin{rmk}
We restrict ourselves to the case where $f$ does not depend explicitly on $O$ since the structure of $f$ is already quite involved.
If this were not the case we would derive further coupled ODEs. 
However the terminal value $F$ depends affine on $O$.
We could allow for jumps in $O$ provided those jumps have affine characteristics in $X$, but do not for reasons of brevity.
It is possible to only consider functional forms of time
$\sigma(\cdot)$ rather than 

$\sigma(\cdot)\sqrt X$ in
\eqref{eqn:hatN}. 
\end{rmk}

We can now give the main theorem which describes the explicit form of the solution processes $(Y,Z, \hat Z,K)$ in terms of the solution to a system of generalized Riccati equations.
\begin{thm}
\label{thm:matrixBSDEsolution}
Let $X$ be an affine semimartingale on $S_d^+$ associated to the admissible parameter set $(\alpha,b,\beta^{ij},m,\mu)$ such that 
\begin{align}
  \label{eqn:integrability1}
\int_{\parallel \xi \parallel > 1} \parallel \xi \parallel (m(d\xi)+M(x,d\xi)) < \infty, \quad x \in S_d^+.
\end{align}
Furthermore suppose that there exists a unique solution $\Gamma(\cdot,u):[0,T] \to S_d$ to the generalized Riccati ODE
\begin{align}
\label{eqn:matrixode}
  - \frac{ \partial \Gamma(t,u)} {\partial t} = \theta(t, \Gamma(t,u)) , \quad \Gamma(T,u)=u,
\end{align}
with
\begin{align}
\label{eqn:matrixodetheta}
\theta(t,u)
&=
4 u \Sigma^{\top} c_{zz}(t) \Sigma u 
+ \mathscr L(t) u 
+ u \mathscr L^{\top}(t) 
+ B^*(u)
+ \mathscr C(t)
\\ \nonumber
& \quad 
+ \int_{S_d^+ \setminus \{0\}} 
\frac{\operatorname{Tr}\left(u(\xi-\chi(\xi))\right) + g_M(t,\operatorname{Tr}(u\xi)) }{\parallel \xi \parallel^2 \wedge 1} \mu(d\xi)
\\ \nonumber
& \quad 
+ \int_{S_d^+ \setminus \{0\}} 
\biggl(
 u \Sigma^{\top} g_{z \sqrt{x}}(t,\operatorname{Tr}(u\xi)) 
+ g_{z \sqrt{x}}^{\top}(t,\operatorname{Tr}(u\xi)) \Sigma u 
+ u g_y(t,\operatorname{Tr}(u\xi))
+ g_x(t,\operatorname{Tr}(u\xi)) 
\\ \nonumber
& \hspace{2.5cm}
+ \sigma^{\top}(t) a g_{\hat z \hat z} (t, \operatorname{Tr}(u\xi)) a^{\top} \sigma(t)
+ \sigma^{\top}(t) a g_{\hat z  z} (t, \operatorname{Tr}(u\xi)) \Sigma u
\\ \nonumber
& \hspace{2.5cm} 
+ u \Sigma^{\top} g^{\top}_{\hat z  z} (t, \operatorname{Tr}(u\xi)) a^{\top} \sigma(t)
+ \sigma^{\top}(t) a g_{\hat z \sqrt x} (t, \operatorname{Tr}(u\xi))
\biggr) m(d\xi),
\end{align}
for $(t,u) \in [0,T] \times S_d$.
The functions $\mathscr L(t)$ and $\mathscr C(t)$ are given by
\begin{align*}
  \mathscr L(t) &=
\frac12 c_y(t)
+ c_{z \sqrt{x}}^{\top}(t) \Sigma 
+ \sigma(t)^{\top} a c_{\hat z z}(t) \Sigma
\\
\mathscr C(t) &=
c_x(t)
+ \sigma(t)^{\top} a c_{\hat z \hat z}(t) a^{\top} \sigma(t)
+\sigma^{\top}(t) a c_{\hat z \sqrt x}(t)
+a o_2(t),
\end{align*}
for all $t \in [0,T]$.
Let $w(\cdot, u, v ): [0,T] \to \R$ be the solution of
\begin{align}
\label{eqn:odew}
-\frac{\partial w(t,u,v)}{\partial t} 
&= \varpi(t,\Gamma(t,u),w(t,u,v)), \quad w(T,u,v)=v,
\end{align}
with
\begin{align*}
  \varpi(t,u,v) &= 
c_y(t) v + 
c_t(t) 
+ \operatorname{Tr}(a o_1(t))+ \operatorname{Tr}(u b) 
\\
& \quad 
+ \int_{S_d^+ \setminus \{0\}} \left(\operatorname{Tr}(u \xi) 
+ g_y(t,\operatorname{Tr}(u\xi))v 
+ g_t(t,\operatorname{Tr}(u\xi)) \right) m(d\xi) ,
\end{align*}
for $(t,u,v) \in [0,T] \times S_d \times \R$.
Then the above BSDE has the unique solution
\begin{align}
\label{solmatrixBSDE}
\begin{cases}
  Y_t &=  \operatorname{Tr}(\Gamma(t,u)X_t) + \operatorname{Tr}(a O_t) + w(t,u,v),
\\ 
  Z_t&=2 \sqrt{X_t} \Gamma(t,u) \Sigma^{\top},
\\
  \hat Z_t &= \sqrt{X_t} \sigma^{\top}(t) a,
\\ 
  K_t(\xi)&= \operatorname{Tr}(\Gamma(t,u)\xi), 
\end{cases}
\end{align}
for all $\xi \in S_d^+$, $t \in [0,T]$.
\end{thm}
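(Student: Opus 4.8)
The plan is to verify, via an affine ansatz and the It\^o formula, that \eqref{solmatrixBSDE} solves \eqref{defBSDE}, and then to establish uniqueness by linearisation. Put $Y_t := \operatorname{Tr}(\Gamma(t,u)X_t) + \operatorname{Tr}(aO_t) + w(t,u,v)$, where $\Gamma(\cdot,u)$ and $w(\cdot,u,v)$ are the solutions of \eqref{eqn:matrixode} and \eqref{eqn:odew} assumed to exist. Since $\Gamma$ and $w$ are continuously differentiable, hence of finite variation, and $X$, $O$ are semimartingales, the product rule gives
\[
 dY_t = \operatorname{Tr}\!\Big(\tfrac{\partial\Gamma}{\partial t}(t,u)\,X_t\Big)dt + \operatorname{Tr}(\Gamma(t,u)\,dX_t) + \operatorname{Tr}(a\,dO_t) + \tfrac{\partial w}{\partial t}(t,u,v)\,dt
\]
with no bracket contribution. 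Inserting the canonical representation \eqref{representationofx} of $X$ and the dynamics \eqref{eqn:hatN} of $O$, and using cyclicity of the trace together with the symmetry of $\sqrt{X_t}$ and $\Gamma(t,u)$: the continuous--martingale part of $\operatorname{Tr}(\Gamma(t,u)\,dX_t)$ collapses to $2\operatorname{Tr}(\Sigma\Gamma(t,u)\sqrt{X_t}\,dW_t)=\operatorname{Tr}(Z_t^{\top}dW_t)$ with $Z_t=2\sqrt{X_t}\Gamma(t,u)\Sigma^{\top}$; the martingale part of $\operatorname{Tr}(a\,dO_t)$ is $\operatorname{Tr}(\hat Z_t^{\top}d\hat Q_t)$ with $\hat Z_t=\sqrt{X_t}\sigma^{\top}(t)a$; and, using the integrability assumption \eqref{eqn:integrability1} to split off the large jumps, the purely-discontinuous part of $\operatorname{Tr}(\Gamma(t,u)\,dX_t)$ equals the compensated integral of $K_t(\xi):=\operatorname{Tr}(\Gamma(t,u)\xi)$ against $\mu^X-\nu$ plus the finite-variation term $\int_{S_d^+\setminus\{0\}}\operatorname{Tr}\!\big(\Gamma(t,u)(\xi-\chi(\xi))\big)\nu(dt,d\xi)$. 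This recovers the three martingale components of \eqref{solmatrixBSDE}.

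It then remains to check that the finite-variation part of $dY_t$ equals $-f(t,X_t,Y_t,Z_t,\hat Z_t,K_t)$. Substituting the candidate $Z_t,\hat Z_t,K_t$ into \eqref{matrixgenerator} and repeatedly using trace cyclicity and the symmetry of $X_t$, every summand of $f(t,X_t,Y_t,Z_t,\hat Z_t,K_t)$ becomes a linear functional $\operatorname{Tr}(\,\cdot\,X_t)$ of $X_t$, a multiple of $Y_t$, or a term free of $X_t$; for instance $\operatorname{Tr}(Z_tc_{zz}(t)Z_t^{\top})$ produces $4\operatorname{Tr}(X_t\Gamma(t,u)\Sigma^{\top}c_{zz}(t)\Sigma\Gamma(t,u))$, the $g_M$-term produces $\int g_M(t,\operatorname{Tr}(\Gamma(t,u)\xi))M(X_t,d\xi)$, and $\operatorname{Tr}(\Gamma(t,u)B(X_t))=\operatorname{Tr}(X_tB^{*}(\Gamma(t,u)))$; similarly the drifts coming from $b$, $B(X_t)$, $\int\chi(\xi)m(d\xi)$, the jump remainder, and $o_1(t)+o_2(t)X_t$ split into an $X_t$-linear part and a part free of $X_t$. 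Equating the coefficients of $X_t$ reproduces precisely the generalised Riccati equation \eqref{eqn:matrixode}--\eqref{eqn:matrixodetheta} for $\Gamma$, with $\mathscr L(t)$ and $\mathscr C(t)$ collecting the mixed, $g$- and $\sigma$-contributions, while the terms free of $X_t$ reproduce the linear equation \eqref{eqn:odew} for $w$; by construction both hold, so $Y_t$ solves \eqref{defBSDE}. The terminal values $\Gamma(T,u)=u$ and $w(T,u,v)=v$ give $Y_T=\operatorname{Tr}(uX_T)+\operatorname{Tr}(aO_T)+v=F(X_T,O_T)$, and properties (i)--(v) of a solution follow from continuity (hence boundedness on $[0,T]$) of $\Gamma,w,\sigma$, the c\`adl\`ag property of $X$, the explicit forms of $Z,\hat Z,K$, the admissibility bound $\int(\|\xi\|\wedge1)(m(d\xi)+M(x,d\xi))<\infty$ together with \eqref{eqn:integrability1}, and the ensuing finite first moments of $X$.

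For uniqueness, let $(Y',Z',\hat Z',K')$ be another solution and write $\delta Y=Y'-Y$, and likewise $\delta Z,\delta\hat Z,\delta K$. As $f$ is quadratic, hence $C^{1}$, in $(z,\hat z)$ and affine in $(y,k)$, the increment $f(t,X_t,Y'_t,Z'_t,\hat Z'_t,K'_t)-f(t,X_t,Y_t,Z_t,\hat Z_t,K_t)$ is $\delta Y_t,\delta Z_t,\delta\hat Z_t,\delta K_t$ times bounded or $\nu$-integrable coefficients, so $(\delta Y,\delta Z,\delta\hat Z,\delta K)$ solves a \emph{linear} BSDE with zero terminal value. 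The coefficient in front of $\delta Z$ involves $Z_t+Z'_t$ and $\sqrt{X_t}$; using the explicit bound on $Z_t$ and the solution properties of $(Y',Z',\hat Z',K')$ one shows it is of BMO / exponential-moment type, so a Girsanov change of measure removes it and forces $\delta Y\equiv0$; comparing continuous-martingale parts and jump measures then yields $\delta Z\equiv\delta\hat Z\equiv0$ and $\delta K=0$ $\nu$-a.e.

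The trace bookkeeping in the verification step, though lengthy, is essentially mechanical; I expect the real difficulty to lie in the uniqueness step — producing enough integrability of the linearised coefficients (and, if necessary, pinning down the class of solutions for which uniqueness is asserted) so that the auxiliary linear BSDE is well posed.
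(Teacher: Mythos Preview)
Your verification step is essentially the paper's proof: make the affine ansatz $Y_t=\operatorname{Tr}(\Gamma(t,u)X_t)+\operatorname{Tr}(aO_t)+w(t,u,v)$, apply It\^o using the canonical representation \eqref{representationofx} and \eqref{eqn:hatN}, read off $Z,\hat Z,K$ from the martingale parts, and then match the finite-variation part with $f(t,X_t,Y_t,Z_t,\hat Z_t,K_t)$ by equating the $X_t$-linear and $X_t$-free pieces, which reproduces \eqref{eqn:matrixode} and \eqref{eqn:odew}. That is exactly what the paper does, and your bookkeeping (the collapse to $2\operatorname{Tr}(\Sigma\Gamma\sqrt{X_t}\,dW_t)$, the use of \eqref{eqn:integrability1} to pass from the $(\xi-\chi(\xi))$-integral against $\mu^X$ to one against $\nu$, the identity $\operatorname{Tr}(\Gamma B(X))=\operatorname{Tr}(X B^{*}(\Gamma))$) is correct.

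Where you diverge is the uniqueness step. The paper does \emph{not} prove uniqueness of the BSDE solution at all; its proof consists solely of the verification above. The word ``unique'' in the theorem is effectively inherited from the assumed uniqueness of $\Gamma$ (and hence $w$), and the proof shows only that \eqref{solmatrixBSDE} is \emph{a} solution. So your linearisation/Girsanov argument is extra, and the concerns you raise about it are well founded: the linearised $z$-coefficient involves $Z_t+Z'_t$, which through $\sqrt{X_t}$ is unbounded, and there is no a priori BMO or exponential-moment control on a generic second solution $(Y',Z',\hat Z',K')$ in the solution class defined here (only very mild integrability is required). Without restricting the class of solutions, the auxiliary linear BSDE need not be well posed, and your uniqueness sketch does not close. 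For the purposes of matching the paper you can simply drop this part; if you want a genuine uniqueness statement you would need to specify a narrower solution class (e.g.\ solutions of the affine form, or with suitable moment bounds) and argue within it.
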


\begin{proof}
Let $\Gamma$ and $w$ be the unique solutions of \eqref{eqn:matrixode} and \eqref{eqn:odew}.
We apply It\^o's formula for semimartingales to the function $[0,T] \times S_d^+  \times M_d \ni (t,x, o) \mapsto \operatorname{Tr}(\Gamma(t,u) x) + \operatorname{Tr}(a o) + w(t,u,v)$.
Using representation \eqref{representationofx} and \eqref{eqn:hatN} this gives
\begin{align*}
& \operatorname{Tr}(\Gamma(t,u) X_t) + \operatorname{Tr}(a O_t) + w(t,u,v) 
\\ \nonumber
&=
 \operatorname{Tr}(\Gamma(T,u) X_T) +w(T,u,v) + \operatorname{Tr}(a O_T) - \int_t^T 2 \operatorname{Tr} \left(\Sigma \Gamma(s,u) \sqrt{X_s}dW_s \right) 
\\ \nonumber
& \quad 
- \int_t^T \left( \operatorname{Tr}(\Gamma(s,u) b) 
+ \operatorname{Tr}(\Gamma(s,u)B(X_s))
+ \operatorname{Tr}\left(\Gamma(s,u) \int_{S_d^+ \setminus \{0\} } \chi(\xi) m (d\xi) \right) \right) ds
\\ \nonumber
& \quad 
- \int_t^T \int_{S_d^+ \setminus \{0\} } \operatorname{Tr}(\Gamma(s,u)\chi(\xi)) (\mu^X(ds , d\xi) - \nu(ds,d\xi)) 
\\ \nonumber
& \quad
- \int_t^T \int_{S_d^+ \setminus \{0\} } \operatorname{Tr}(\Gamma(s,u)(\xi-\chi(\xi)) \mu^X(ds,d\xi)
\\ \nonumber
& \quad 
- \int_t^T \left(\operatorname{Tr}\left(\frac{\partial \Gamma(s,u)}{\partial s} X_s\right) 
+ \frac{\partial w(s,u,v)}{\partial s}\right) ds
\\ \nonumber
& \quad 
- \int_t^T \operatorname{Tr}\left( a \sigma (s) \sqrt{X_s} d \hat Q  
\right)
- \int_t^T  \operatorname{Tr} \left(  a (o_1(s)+ o_2(s) X_s) \right)ds
 ,
\end{align*}
where we have used basic properties of the trace to derive
\begin{align*}
\operatorname{Tr}\l(\Gamma(t,u) (\sqrt{X_t} dW_t \Sigma + \Sigma^\top dW_t^\top \sqrt{X_t}) \r)
&= 
2  \operatorname{Tr}\l(\Sigma \Gamma(t,u) \sqrt{X_t} dW_t  \r) .
\end{align*}
Because of the admissibility conditions \eqref{eqn:integrabilitym} and \eqref{eqn:integrabilityM} and the integrability condition \eqref{eqn:integrability1} on the measures $m$ and $M$, we may write the above equation in the following form
\begin{align}
\label{eqn:forward}
& \operatorname{Tr}(\Gamma(t,u) X_t)  + \operatorname{Tr}(a O_t) + w(t,u,v) 
\\ \nonumber
&= 
 \operatorname{Tr}(\Gamma(T,u) X_T) +w(T,u,v) + \operatorname{Tr}(a O_T) - \int_t^T 2 \operatorname{Tr} \left(\Sigma \Gamma(s,u) \sqrt{X_s}dW_s \right) 
\\ \nonumber
& \quad 
- \int_t^T \left( \operatorname{Tr}(\Gamma(s,u) b) 
+ \operatorname{Tr}(\Gamma(s,u)B(X_s))
+ \operatorname{Tr}\left(\Gamma(s,u) \int_{S_d^+ \setminus \{0\} } \chi(\xi) m (d\xi) \right) \right) ds
\\ \nonumber
& \quad 
- \int_t^T \int_{S_d^+ \setminus \{0\} } \operatorname{Tr}(\Gamma(s,u)\xi) (\mu^X(ds , d\xi) - \nu(ds,d\xi)) 
\\ \nonumber
& \quad
- \int_t^T \int_{S_d^+ \setminus \{0\} } \operatorname{Tr}(\Gamma(s,u)(\xi-\chi(\xi)) \nu(ds,d\xi)
\\ \nonumber
& \quad 
- \int_t^T \left(\operatorname{Tr}\left(\frac{\partial \Gamma(s,u)}{\partial s} X_s\right) 
+ \frac{\partial w(s,u,v)}{\partial s}\right) ds
\\ \nonumber
& \quad 
- \int_t^T \operatorname{Tr}\left( a \sigma (s) \sqrt{X_s} d \hat Q 
\right)
- \int_t^T  \operatorname{Tr} \left(  a (o_1(s)+ o_2(s) X_s) \right)ds
 .
\end{align}
Hence the BSDE \eqref{defBSDE}
is solved by \eqref{solmatrixBSDE} provided the finite variation parts of \eqref{eqn:forward} and the BSDE
coincide, i.e. if 
\begin{align*}
&f(t,X_t,Y_t,Z_t,\hat Z,K_t)
\\
&= 
  -\operatorname{Tr}(\Gamma(t,u) b) - \operatorname{Tr}(\Gamma(t,u)B(X_t)) - \operatorname{Tr}\left(\Gamma(t,u) \int_{S_d^+ \setminus \{0\} } \chi(\xi) m (d\xi) \right) 
\\
& \quad 
- \int_{S_d^+ \setminus \{0\} } \operatorname{Tr}\left(\Gamma(t,u)(\xi-\chi(\xi))\right) \left(m(d\xi) + M(X_t,d\xi)\right) 
\\
& \quad -\operatorname{Tr}\left(\frac{\partial \Gamma(t,u)}{\partial t} X_t\right) 
- \frac{\partial w(t,u,v)}{\partial t}
- \operatorname{Tr} \left(a o_1(t)\right) - \operatorname{Tr}\left( a  o_2(t) X_t \right) 
,
\end{align*}
for $t \in [0,T]$.
Using the special form \eqref{matrixgenerator} of the generator $f$ and formulae \eqref{solmatrixBSDE} we calculate 
\begin{align*}
& f(t,X_t,Y_t,Z_t,\hat Z_t,K_t) 
\\
&=  
4 \operatorname{Tr}( \sqrt{X_t} \Gamma(t,u)\Sigma^{\top} c_{zz}(t) \Sigma \Gamma(t,u) \sqrt{X_t})
+ 2 \operatorname{Tr}(  \sqrt{X_t} \Gamma(t,u)\Sigma^{\top} c_{z \sqrt{x}}(t) \sqrt{X_t})
+  \operatorname{Tr}( c_x(t) X_t) 
\\
& \quad  
+ c_y(t) \operatorname{Tr}(\Gamma(t,u) X_t) 
+ c_y(t) w(t,u,v) 
+c_t(t) 
+ \operatorname{Tr}(\sqrt{X_t} \sigma^{\top}(t) a  
	c_{\hat z \hat z}(t) 
	a^{\top} \sigma(t) \sqrt{X_t} )
\\ \nonumber
& \quad
+ \operatorname{Tr}(\sqrt{X_t} \sigma^{\top}(t) a 
	c_{\hat z z}(t) 
	2 \Sigma \Gamma(t,u) \sqrt{X_t})
+\operatorname{Tr}(\sqrt{X_t} \sigma^{\top}(t) a 
	c_{\hat z \sqrt{x}}(t) 
	\sqrt{X_t})
\\ \nonumber
& \quad
+ \int_{S_d^+ \setminus \{0\}} g_M(t,\operatorname{Tr}(\Gamma(t,u)\xi)) M(X_t,d\xi)
\\
& \quad + \int_{S_d^+ \setminus \{0\}} 
\biggl( 
\operatorname{Tr}(2X_t \Gamma(t,u) \Sigma^{\top}  g_{z\sqrt{x}}(t,\operatorname{Tr}(\Gamma(t,u)\xi))) 
+ \operatorname{Tr}(g_{x}(t,\operatorname{Tr}(\Gamma(t,u)\xi)) X_t) 
\\ \nonumber
&  \hspace{2.5cm}
+ \operatorname{Tr}(g_{y}(t,\operatorname{Tr}(\Gamma(t,u)\xi)) \Gamma(t,u) X_t)
+g_y(t,\operatorname{Tr}(\Gamma(t,u)\xi)) w(t,u,v)
\\ \nonumber
&  \hspace{2.5cm}
+g_t(t)
+ \operatorname{Tr}(\sqrt{X_t} \sigma^{\top}(t) a 
	g_{\hat z \hat z}(t,\operatorname{Tr}(\Gamma(t,u)\xi)) 
	a^{\top} \sigma(t) \sqrt{X_t})
\\
& \hspace{2.5cm} 
+ \operatorname{Tr}(\sqrt{X_t} \sigma^{\top}(t) a 
	g_{\hat z z}(t,\operatorname{Tr}(\Gamma(t,u)\xi)) 
	2 \Sigma \Gamma(t,u) \sqrt{X_t})
\\
& \hspace{2.5cm}
+\operatorname{Tr}(\sqrt{X_t} \sigma^{\top}(t) a 
	g_{\hat z \sqrt{x}}(t,\operatorname{Tr}(\Gamma(t,u)\xi)) 
	\sqrt{X_t})
\biggr) m(d\xi) 
\\
&=
- \operatorname{Tr}(\Gamma(t,u) b) 
- \operatorname{Tr}(\Gamma(t,u)B(X_t)) 
\\
& \quad
- \int_{S_d^+ \setminus \{0\} } \operatorname{Tr}\left(\Gamma(t,u)\chi(\xi)  \right) m (d\xi)
- \int_{S_d^+ \setminus \{0\} } \operatorname{Tr}\left(\Gamma(t,u)(\xi-\chi(\xi))\right) \left(m(d\xi) + M(X_t,d\xi)\right) 
\\
& \quad
- \operatorname{Tr}\left(\frac{\partial \Gamma(t,u)}{\partial t} X_t\right) 
-\frac{\partial w(t,u,v)}{\partial t} 
- \operatorname{Tr}(a o_1(t)) - \operatorname{Tr}(a o_2(t) X_t)),
\end{align*}
where the last equality is obtained from \eqref{eqn:matrixode} and \eqref{eqn:odew}, the definition of the adjoint operator $B^*$ and basic properties of the trace.
\end{proof}

\begin{rmk}
 Obviously our results also apply to 'standard' FBSDEs, where the BSDE is only allowed to depend the affine process $X$ itself. 
\end{rmk}

In the remaining section we study the existence of generalized Riccati equations of the form \eqref{eqn:matrixode}.
We always assume that $(\alpha,b,\beta^{ij},m,\mu)$ is an admissible parameter set and that \eqref{eqn:integrability1} holds.
Furthermore we make use of the fact that the ODE \eqref{eqn:matrixode} is equivalent to 
\begin{align}
\label{eqn:matrixodeforward}
  \frac{ \partial \Gamma(t,u)} {\partial t} = \theta(t, \Gamma(t,u)) , \quad \Gamma(0,u)=u,
\end{align}
for $(t,u) \in [0,T] \times S_d$.
This is derived via time reversal. 
The equations \eqref{eqn:matrixode} and \eqref{eqn:odew} are not coupled, in particular given $\Gamma$ we need only to solve a linear ODE to get $w$. 
Thus, our main objective is to find sufficient conditions for existence and uniqueness of the solution $\Gamma$ of \eqref{eqn:matrixode}.

\subsection{Solution of matrix Riccati ODEs in the continuous case}
\label{sec:solcont}

First we consider a simpler version of equation \eqref{eqn:matrixode}, more precisely
\begin{align}
\label{eqn:conttheta}
-\frac{ \partial \Gamma(t,u)} {\partial t} &= \theta(t, \Gamma(t,u)) , \quad \Gamma(T,u)=u,
\\ \nonumber
\theta(t,u)
&=
4 u \Sigma^{\top} c_{zz}(t) \Sigma u 
+ \mathscr L(t) u 
+ u \mathscr L^{\top}(t) 
+ B^*(u)
+ \mathscr C(t).
\end{align}
Note that for arbitrary $u \in S_d$ the solution of the above ODE may explode due to the quadratic term in the equation.
In the corresponding continuous case with constant coefficients and terminal value $u=0$ we derive a closed form solution.
We start with an assumption.
\begin{itemize}
  \item[\bf{(A1)}]
Let the linear drift term $B$ be of the form $B(x)= x\hat B +  \hat B^{\top}x$, where $\hat B \in M_d$.
\end{itemize}

\begin{prop} 
\label{easyriccati}
Let {\rm (A1)} hold and for $t \in [0,T]$ let the coefficients  $c_{zz}(t) \equiv c_{zz}, \mathscr L(t) \equiv \mathscr L, \mathscr C(t)  \equiv \mathscr C 
$ be constant.
Furthermore suppose $\alpha, c_{zz} \in S_d^{++}$ 
and define the matrix-valued functions $A:[0,T] \to M_{2d}$, $A_{ij}:[0,T] \to M_{d}$, $i,j=1,2$, in the following way
\begin{align*}
  A(t)
=\left( \begin{matrix}
             A_{11}(t)  & A_{12}(t) \\
  	     A_{21}(t)	& A_{22}(t)
             \end{matrix}
\right) 
=\exp \left( (T-t) \left( \begin{matrix}
		\mathscr L^{\top} + \hat B^{\top} & - 4\Sigma^{\top} c_{zz} \Sigma \\
	        \mathscr C & -\mathscr L - \hat B
			    c_x 	&	-(2 c_{z \sqrt x}^{\top} \Sigma + \hat B)
                          \end{matrix} \right)
\right).
\end{align*}
Then, for the terminal value $u =0$
there exists a unique solution $\Gamma(\cdot,0) \in S_d$ to \eqref{eqn:conttheta} given by
\begin{align*}
  \Gamma(t,0) = A^{-1}_{22}(t) A_{21}(t), \quad t \in [0,T].
\end{align*}
\end{prop}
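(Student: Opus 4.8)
The plan is to linearize the matrix Riccati equation \eqref{eqn:conttheta} via the classical substitution. First I would write $\Gamma=\Gamma(t,0)$ and look for a representation $\Gamma(t) = P(t)^{-1}Q(t)$, where the pair $(P,Q)$ of $M_d$-valued functions solves a \emph{linear} system of ODEs obtained by plugging the ansatz into \eqref{eqn:conttheta}. Concretely, under assumption {\rm (A1)} we have $B^*(u) = \hat B^{\top} u + u \hat B$ (the adjoint of $B(x)=x\hat B+\hat B^{\top}x$ with respect to $\operatorname{Tr}$), so
\begin{align*}
-\dot\Gamma = 4\Gamma\Sigma^{\top}c_{zz}\Sigma\,\Gamma + (\mathscr L+\hat B^{\top})\Gamma + \Gamma(\mathscr L^{\top}+\hat B) + \mathscr C, \qquad \Gamma(T)=0.
\end{align*}
Substituting $\Gamma = P^{-1}Q$ and using $\frac{d}{dt}(P^{-1}) = -P^{-1}\dot P P^{-1}$, the quadratic term is absorbed provided one chooses the linear system
\begin{align*}
\begin{pmatrix}\dot P\\ \dot Q\end{pmatrix} = -\begin{pmatrix}\mathscr L^{\top}+\hat B^{\top} & -4\Sigma^{\top}c_{zz}\Sigma\\ \mathscr C & -\mathscr L-\hat B\end{pmatrix}\begin{pmatrix}P\\ Q\end{pmatrix},
\end{align*}
with terminal data $P(T)=I_d$, $Q(T)=0$ (so that $\Gamma(T)=P(T)^{-1}Q(T)=0$). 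Since the coefficient matrix is constant, the solution is the matrix exponential run backwards from $T$, which is exactly the block matrix $A(t)$ in the statement: $(P(t)^{\top},Q(t)^{\top})$-type bookkeeping aside, one reads off $P(t)=A_{22}(t)$ and $Q(t)=A_{21}(t)$ after identifying the blocks, giving $\Gamma(t,0)=A_{22}^{-1}(t)A_{21}(t)$.

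The steps in order are: (1) verify $B^* (u)=\hat B^{\top}u+u\hat B$ under {\rm (A1)}; (2) perform the substitution $\Gamma=P^{-1}Q$ and check algebraically that it converts \eqref{eqn:conttheta} into the stated constant-coefficient linear system for $(P,Q)$ — a routine but careful trace/matrix manipulation; (3) solve the linear system by the matrix exponential and match blocks with $A(t)$; (4) argue that $A_{22}(t)$ is invertible on all of $[0,T]$, which is what makes the formula well-defined and, crucially, gives a \emph{non-explosive} solution. For (4) the point is that $A_{22}(t)$ is continuous with $A_{22}(T)=I_d$, so it is invertible on a neighborhood of $T$; to get invertibility on the whole interval one uses the symmetry/positivity structure — the assumptions $\alpha=\Sigma\Sigma^{\top}\in S_d^{++}$ and $c_{zz}\in S_d^{++}$ ensure the relevant Riccati flow stays in $S_d$ and does not leave through a singularity of $P$. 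One should also check that $\Gamma(t,0)$ so defined is symmetric: this follows because the equation preserves $S_d$ (the right-hand side maps $S_d$ to $S_d$ when $c_{zz},\alpha$ are symmetric) and the terminal value $0\in S_d$, so by uniqueness of the linear flow $\Gamma(t,0)^{\top}=\Gamma(t,0)$.

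The main obstacle is step (4): showing $A_{22}(t)$ stays invertible, equivalently that the linearized flow $P(t)$ never degenerates on $[0,T]$, which is the analytic content behind ``non-explosion'' of the Riccati ODE. For the special terminal value $u=0$ and with $c_{zz},\alpha$ strictly positive definite this should follow from a comparison/monotonicity argument: the solution $\Gamma(t,0)$ of \eqref{eqn:conttheta} run backward from $0$ can be shown to remain in $S_d^-$ (or bounded in a suitable cone), and a blow-up would force $\|\Gamma(t,0)\|\to\infty$, contradicting a priori bounds derived from the sign of the quadratic coefficient $-4u\Sigma^{\top}c_{zz}\Sigma u \preceq 0$. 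Making this rigorous — rather than the purely formal substitution, which is bookkeeping — is where the real work lies; everything else is linear algebra and the standard theory of matrix Riccati equations.
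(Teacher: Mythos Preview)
Your approach is essentially the paper's: linearize the Riccati equation via the substitution $\Gamma = J^{-1}G$ (your $P^{-1}Q$), derive a linear first-order system for the pair, solve it by the matrix exponential, and read off the blocks. The paper itself only ``outlines the procedure'' and defers the existence/non-explosion (your step (4), invertibility of $A_{22}$) to \cite{gt08}, so on that point you actually go further than the paper by sketching a comparison argument.

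Two bookkeeping corrections. First, under {\rm (A1)} one has $B^*(u)=\hat B\,u+u\,\hat B^{\top}$, not $\hat B^{\top}u+u\hat B$; check it via $\operatorname{Tr}(B(x)u)=\operatorname{Tr}(x(\hat B u+u\hat B^{\top}))$. Second, and more substantively, the linearized system acts on the pair from the \emph{right}, not the left: with $\Gamma=J^{-1}G$ one obtains
\[
-\partial_t\bigl(G(t)\ \ J(t)\bigr)=\bigl(G(t)\ \ J(t)\bigr)
\begin{pmatrix}\mathscr L^{\top}+\hat B^{\top} & -4\Sigma^{\top}c_{zz}\Sigma\\ \mathscr C & -\mathscr L-\hat B\end{pmatrix},
\qquad \bigl(G(T)\ \ J(T)\bigr)=(0\ \ I_d),
\]
which after exponentiating gives $G(t)=A_{21}(t)$, $J(t)=A_{22}(t)$ directly. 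Your column-vector system with left multiplication is not equivalent (left and right multiplication by a non-symmetric block matrix differ), so the ``bookkeeping aside'' caveat hides a genuine mismatch; fix the orientation and the blocks fall out without any transposition gymnastics.
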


\begin{proof}
The result relies on theorems from Lie group theory and can be found in \cite{gt08} Section 2.2.
We only outline the procedure in which they linearize the matrix Riccati ODE.
For every $t \in [0,T]$ let $J(t) \in M_d$ be an invertible matrix and $G(t) \in M_d$.
We set $J(T)=I_d$, $G(T)=0$ and 
\begin{align} 
\label{eqn:setgamma}
  \Gamma(t,0)&= J^{-1}(t)G(t), \quad  t \in [0,T],
\end{align}
which gives
\begin{align*}
  - \partial_t G(t) = -\partial_t (J(t) \Gamma(t,0)) = -\partial_t (J(t)) \Gamma(t,0) - J(t) \partial_t (\Gamma(t,0)),
\end{align*}
and hence using \eqref{eqn:conttheta} we have
\begin{align*}
\partial_t (J(t)) \Gamma(t,0) -\partial_t G(t)  
= &
 4 G(t) \Sigma^{\top} c_{zz} \Sigma \Gamma(t,0) 
+ G(t) \mathscr L^{\top} 
+ J(t)\mathscr L \Gamma(t,0)  
& + J(t) \hat B \Gamma(t,0) + G(t) \hat B^{\top} + J(t) \mathscr C. 
\end{align*}
Equating coefficients, we get a system of $2d$ linear equations
\begin{align}
\label{linearizedode}
 -\partial_t \left( \begin{matrix}
              G(t) & J(t)
            \end{matrix} \right)
& =
\left( \begin{matrix}
              G(t) & J(t)
       \end{matrix} \right)
\left( \begin{matrix}
		\mathscr L^{\top} + \hat B^{\top} & - 4\Sigma^{\top} c_{zz} \Sigma \\
	        \mathscr C & -\mathscr L - \hat B
       \end{matrix} \right),
\end{align}
with terminal value $(G(T),J(T))=(0,I_d)$.
Via exponentiation we deduce its solution to be
\begin{align*}
\left( \begin{matrix}
              G(t) & J(t)
            \end{matrix} \right)
& =
\left( \begin{matrix}
              0 & I_d
       \end{matrix} \right)
\exp \left( (T-t)
\left( \begin{matrix}
		\mathscr L^{\top} + \hat B^{\top} & - 4\Sigma^{\top} c_{zz} \Sigma \\
	        \mathscr C & -\mathscr L - \hat B
       \end{matrix} \right)\right).
\end{align*}
\end{proof}

\begin{rmk}
\label{rmk:integration}
  In the situation of Proposition \ref{easyriccati} we can further simplify the computation of the component $Y$ of the solution to BSDE \eqref{defBSDE} in Theorem \ref{thm:matrixBSDEsolution}. 
  In order to determine $Y$ it is typically not only necessary to solve \eqref{eqn:conttheta}, but also to numerically integrate $\operatorname{Tr}(\Gamma(t,u)b)$.
  This can be circumvented in many cases, compare \cite{gt08} Chapter 4.2.
  	Let $c_t(t) \equiv c_t$, $c_y=0$ and $o_1(t) \equiv o_1$ for all $t \in [0,T]$.
  With the notation from the above proof we have from \eqref{linearizedode} 
  \begin{align*}
	G(t)&= \left(\partial_t J(t) - J(t)\mathscr L  - J(t) \hat B\right) (4\Sigma^{\top}c_{zz} \Sigma)^{-1},
  \end{align*}
  and hence, using \eqref{eqn:setgamma} we get
  \begin{align*}
    \Gamma(t,0) &= \left(J^{-1}(t) \partial_t J(t) - \mathscr L - \hat B \right) (4\Sigma^{\top}c_{zz} \Sigma)^{-1}.
  \end{align*}
  Since
  \begin{align*}
    w(t,0,v)= v + \int_t^T \operatorname{Tr}(\Gamma(s,0)b) ds + \int_t^T \left( c_t + \operatorname{Tr}(a o_1)\right) ds,
  \end{align*}
  it follows with \eqref{solmatrixBSDE} for all $t \in [0,T]$
  \begin{align*}
    Y_t &=  \operatorname{Tr}(\Gamma(t,0) R_t) + \operatorname{Tr} (a O_t)+ v - \log \parallel 
    A_{22}(t) \parallel \operatorname{Tr}\left((4\Sigma^{\top}c_{zz} \Sigma)^{-1} b \right) \\
   & \quad - (T-t) \operatorname{Tr}\left((\mathscr L + \hat B) (4\Sigma^{\top}c_{zz} \Sigma)^{-1}b\right)  + (T-t) \left(c_t + \operatorname{Tr}(a o_1) \right) .
  \end{align*}
\end{rmk}

To state further existence results we need additional assumptions, namely
\begin{itemize}
  \item[\bf{(A2$^+$)}] $c_{zz} \in S_d^-$ and $\mathscr C \in S_d^+$,
  \item[\bf{(A2$^-$)}] $c_{zz} \in S_d^+$ and $\mathscr C \in S_d^-$.
\end{itemize}

\begin{prop}
\label{christaeberhardriccati}
Let {\rm (A2$^+$)} hold.
Then, for every $u \in S_d^+$, there exists a unique solution $\Gamma(\cdot,u) \in S_d^+$ to \eqref{eqn:conttheta}.
Moreover, if $t \mapsto (c_{zz}(t), \mathscr L(t),\mathscr C(t) )$ is real analytic in $t \in [0,T]$ and $u \in S_d^{++}$, then $\Gamma(\cdot , u) \in S_d^{++}$.
\end{prop}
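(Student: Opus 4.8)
The plan is to reverse time, so that $\Gamma(\cdot,u)$ becomes the forward flow of $\partial_t\Gamma=\theta$ with $\Gamma(0,u)=u$ (as explained before \eqref{eqn:matrixodeforward}), and then to run three arguments: local existence/uniqueness, invariance of $S_d^+$, and an a priori bound excluding explosion before $T$; the ``moreover'' part will then come almost for free. The vector field $\theta(t,\cdot)$ in \eqref{eqn:conttheta} is a quadratic polynomial with coefficients continuous in $t$, hence locally Lipschitz, so Picard--Lindel\"of supplies a unique maximal solution. For the invariance of the closed convex cone $S_d^+$ I would use the Nagumo--Bony subtangentiality criterion: it suffices that $\operatorname{Tr}(\theta(t,w)\eta)\ge 0$ for every $w\in\partial S_d^+$ and every $\eta\in S_d^+$ with $\operatorname{Tr}(w\eta)=0$ (such $\eta$ automatically satisfy $w\eta=\eta w=0$). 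Testing \eqref{eqn:conttheta} against such $\eta$: the quadratic term gives $4\operatorname{Tr}(\eta w\Sigma^{\top}c_{zz}(t)\Sigma w)=0$; the terms $\mathscr L(t)w+w\mathscr L^{\top}(t)$ contribute $0$ as well; $\operatorname{Tr}(B^{*}(w)\eta)=\operatorname{Tr}(wB(\eta))\ge 0$ by the dual form of admissibility condition (v) of Definition \ref{defn:admparset} (we are in the continuous case, so the integral term there is absent); and $\operatorname{Tr}(\mathscr C(t)\eta)\ge 0$ because $\mathscr C(t)\in S_d^+$ by (A2$^+$). Hence $\Gamma(t,u)\in S_d^+$ throughout its interval of existence.

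To rule out explosion I would exploit the other half of (A2$^+$): since $c_{zz}(t)\in S_d^-$, on $S_d^+$ one has $4w\Sigma^{\top}c_{zz}(t)\Sigma w=(\Sigma w)^{\top}\bigl(4c_{zz}(t)\bigr)(\Sigma w)\preceq 0$, so along the solution $-\partial_t\Gamma(t,u)\preceq\mathscr L(t)\Gamma(t,u)+\Gamma(t,u)\mathscr L^{\top}(t)+B^{*}(\Gamma(t,u))+\mathscr C(t)$. I would then compare with the solution $\bar\Gamma$ of the \emph{linear} inhomogeneous matrix ODE whose right-hand side is exactly this affine expression: $\bar\Gamma$ exists on all of $[0,T]$ and stays in $S_d^+$, because its linear part $x\mapsto\mathscr L(t)x+x\mathscr L^{\top}(t)+B^{*}(x)$ is cross-positive with respect to $S_d^+$ (the computation of the previous paragraph without the $\mathscr C$-term) and its inhomogeneity $\mathscr C$ is $S_d^+$-valued. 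The comparison principle for ODEs with cross-positive linear part then gives $0\preceq\Gamma(t,u)\preceq\bar\Gamma(t)$ on the maximal interval, so $\|\Gamma(t,u)\|\le\operatorname{Tr}(\Gamma(t,u))\le\operatorname{Tr}(\bar\Gamma(t))$ is bounded on $[0,T]$. A bounded maximal solution cannot blow up, hence $\Gamma(\cdot,u)$ is defined on all of $[0,T]$, and uniqueness on $[0,T]$ follows from the local statement.

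For the open-cone claim I would argue topologically rather than by an estimate. By the two previous paragraphs the time-$s$ flow map $\Phi_s\colon S_d^{++}\to S_d$, $\Phi_s(u):=\Gamma(T-s,u)$, is well defined for every $s\in[0,T]$, continuous (continuous dependence on initial data) and injective (backward uniqueness for a locally Lipschitz ODE). Since $S_d^{++}$ is open in $S_d\cong\R^{d(d+1)/2}$, Brouwer's invariance of domain makes $\Phi_s(S_d^{++})$ open in $S_d$; but $\Phi_s(S_d^{++})\subseteq\Phi_s(S_d^+)\subseteq S_d^+$ by the cone invariance already proved, and an open subset of $S_d$ contained in $S_d^+$ lies in $\operatorname{int}(S_d^+)=S_d^{++}$. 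Hence $\Gamma(t,u)\in S_d^{++}$ for all $t\in[0,T]$ whenever $u\in S_d^{++}$. (The hypothesised real-analyticity of $t\mapsto(c_{zz}(t),\mathscr L(t),\mathscr C(t))$ gives an alternative staying closer to the stated assumptions: if $\Gamma(\cdot,u)$ met $\partial S_d^+$ at some $t^{*}<T$, pick $0\ne\eta\in S_d^+$ with $\operatorname{Tr}(\Gamma(t^{*},u)\eta)=0$ and consider the real-analytic function $h(t)=\operatorname{Tr}(\eta\Gamma(t,u))\ge 0$; then $h(t^{*})=0$, and differentiating the boundary identity of the first paragraph repeatedly forces $h$ to vanish to infinite order at $t^{*}$, contradicting $h(T)=\operatorname{Tr}(\eta u)>0$.)

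The step I expect to be the genuine obstacle is this last one. The difficulty is that $\mathscr C$ is only positive \emph{semi}definite, so it need not drive the flow strictly into $S_d^{++}$; an inverse substitution $P=\Gamma^{-1}$ only yields a scalar Riccati inequality of the form $\dot q\le c_0+c_1 q+c_2 q^{2}$ for $q=\operatorname{Tr}(\Gamma^{-1})$, which does not by itself prevent $\lambda_{\min}(\Gamma)\to 0$ in finite time. The invariance-of-domain argument above is what makes the statement go through, precisely because it requires no quantitative control of $\lambda_{\min}(\Gamma)$; everything else (local theory, cone invariance via subtangentiality, the linear comparison) is routine once the sign structure of (A2$^+$) is in place.
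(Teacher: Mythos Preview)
Your argument is correct and proceeds quite differently from the paper. The paper does not prove this proposition from scratch: it verifies that the continuous parameter set $(-4\Sigma^{\top}c_{zz}\Sigma,\,l,\,\mathscr C)$ with $l(t)(u)=\mathscr L(t)u+u\mathscr L^{\top}(t)+B^{*}(u)$ is admissible in the sense of \cite{cm08} (the key computation being $\operatorname{Tr}(l(t)(u_0)w)\ge 0$ whenever $u_0\in\partial S_d^+$, $w\in S_d^+$, $\operatorname{Tr}(u_0w)=0$, which is precisely your Nagumo check) and then invokes \cite{cm08} Theorem~4.3 for both existence in $S_d^+$ and the strict-positivity assertion; uniqueness is dispatched via local Lipschitz continuity. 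Your route is self-contained: Nagumo subtangentiality for $S_d^+$-invariance, comparison with the linear inhomogeneous problem for an a~priori bound (note a small slip: after your time reversal you should have $\partial_t\Gamma\preceq\cdots$, not $-\partial_t\Gamma\preceq\cdots$), and Brouwer invariance of domain for the $S_d^{++}$-part. The last device is a nice observation: it does not use the real-analyticity hypothesis at all, so you actually prove a stronger ``moreover'' than stated. By contrast, your alternative (b) is not complete as written: from $h(t^{*})=0$, $h\ge 0$ you only get $h'(t^{*})=0$, which forces $\operatorname{Tr}(\mathscr C(t^{*})\eta)=0$ and $\operatorname{Tr}(\Gamma(t^{*})B(\eta))=0$, but pushing this to \emph{all} higher derivatives requires an inductive bookkeeping argument you have not supplied; since (a) already does the job, this does not affect the validity of your proof.
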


\begin{proof}
For $u \in S_d^+$ the existence of a solution $\Gamma(\cdot , u ) \in S_d^+$ of \eqref{eqn:matrixode} follows from \cite{cm08} Theorem 4.3.
Indeed the continuous parameter set $t \mapsto (-4 \Sigma^{\top} c_{zz}(t) \Sigma, l(t), \mathscr C(t))$ with $l(t)(u)= u\mathscr L^{\top}(t)+\mathscr L(t) u +B^*(u)$, $u \in S_d$, is admissible in the sense: $-4 \Sigma^{\top} c_{zz}(t) \Sigma, \mathscr C(t) \in S_d^+$ and for all $u_0 \in \partial S_d^+$, $w \in S_d^+$ such that $\operatorname{Tr}(u_0 w ) =0$ and $t \in [0,T]$ we have
\begin{align*}
\operatorname{Tr}(l(t)(u_0)w) &= \operatorname{Tr}(u_0 \mathscr L^{\top}(t) w+\mathscr L(t) u_0 w+B^*(u_0)w)\\
&\geq   \operatorname{Tr}(u_0 \mathscr L^{\top}(t) w +  \mathscr L(t) u_0 w)
\\
 & = 0, 
\end{align*}
where we have used $\operatorname{Tr}(B^*(x)y)=\operatorname{Tr}(B(y)x)$ for all $x,y\in S_d$, as well as admissibility condition \eqref{eqn:lindrift} and \cite{cfmt09} Lemma 4.1.
Since the right hand side of \eqref{eqn:conttheta} is locally Lipschitz in $u$, uniqueness follows immediately.
\cite{cm08} Theorem 4.3 now implies the second assertion.
\end{proof}

\begin{cor}
\label{cor:christaeberhardriccati}
If {\rm (A2$^-$)} holds,
then there exists a unique solution $\Gamma(\cdot, u) \in S_d^-$ to \eqref{eqn:conttheta} for every $u \in S_d^-$.
Moreover, if $t \mapsto (c_{zz}(t), \mathscr L(t),\mathscr C(t) )$ is real analytic in $t \in [0,T]$ and $u \in S_d^{--}$, then $\Gamma(\cdot, u) \in S_d^{--}$.
\end{cor}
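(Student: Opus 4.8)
The plan is to reduce the statement to Proposition~\ref{christaeberhardriccati} by the change of variables $\Gamma \mapsto -\Gamma$, $u \mapsto -u$, so that (A2$^-$) is turned into (A2$^+$). The key structural observation is that the right-hand side $\theta$ in \eqref{eqn:conttheta} splits into a quadratic part $4u\Sigma^{\top}c_{zz}(t)\Sigma u$, which is even in $u$, a linear part $\mathscr L(t)u + u\mathscr L^{\top}(t) + B^{*}(u)$, which is odd in $u$, and the constant part $\mathscr C(t)$. Consequently, writing $\hat\theta(t,u) := -\theta(t,-u)$, a one-line computation gives
\begin{align*}
  \hat\theta(t,u) = 4u\Sigma^{\top}(-c_{zz}(t))\Sigma u + \mathscr L(t)u + u\mathscr L^{\top}(t) + B^{*}(u) + (-\mathscr C(t)),
\end{align*}
which is again exactly of the form \eqref{eqn:conttheta}, now with coefficients $(-c_{zz}(t),\mathscr L(t),-\mathscr C(t))$ and the same linear drift $B$.

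First I would fix $u \in S_d^-$, set $w := -u \in S_d^+$, and define $\hat\Gamma(t,w) := -\Gamma(t,-w)$. Differentiating and using \eqref{eqn:conttheta} one checks that $\Gamma(\cdot,u)$ solves \eqref{eqn:conttheta} with terminal value $u$ if and only if
\begin{align*}
  -\frac{\partial \hat\Gamma(t,w)}{\partial t} = \hat\theta(t,\hat\Gamma(t,w)), \quad \hat\Gamma(T,w) = w,
\end{align*}
i.e.~$\hat\Gamma(\cdot,w)$ solves the ODE \eqref{eqn:conttheta} for the parameter set $(-c_{zz}(t),\mathscr L(t),-\mathscr C(t))$. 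Next I would verify that under (A2$^-$) this new parameter set satisfies (A2$^+$): $c_{zz}(t)\in S_d^+$ gives $-c_{zz}(t)\in S_d^-$ and $\mathscr C(t)\in S_d^-$ gives $-\mathscr C(t)\in S_d^+$, while the linear part, and in particular the admissibility of $B$ used in the proof of Proposition~\ref{christaeberhardriccati}, is untouched. Thus Proposition~\ref{christaeberhardriccati} applies and yields, for every $w\in S_d^+$, a unique solution $\hat\Gamma(\cdot,w)\in S_d^+$. Translating back, $\Gamma(t,u) := -\hat\Gamma(t,-u)$ is the required solution of \eqref{eqn:conttheta} with terminal value $u\in S_d^-$ and takes values in $S_d^-$; uniqueness among all solutions follows as before, since the right-hand side of \eqref{eqn:conttheta} is locally Lipschitz.

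For the second assertion I would simply note that $t\mapsto(c_{zz}(t),\mathscr L(t),\mathscr C(t))$ is real analytic on $[0,T]$ if and only if $t\mapsto(-c_{zz}(t),\mathscr L(t),-\mathscr C(t))$ is, and that $u\in S_d^{--}$ is equivalent to $-u\in S_d^{++}$; hence the ``moreover'' part of Proposition~\ref{christaeberhardriccati} gives $\hat\Gamma(\cdot,-u)\in S_d^{++}$, so $\Gamma(\cdot,u)=-\hat\Gamma(\cdot,-u)\in S_d^{--}$. There is no real obstacle here beyond careful sign bookkeeping; the only point deserving genuine attention is confirming that the transformed equation is honestly of the form \eqref{eqn:conttheta} — i.e.~that the even/odd split of $\theta$ is as stated and that no cross terms appear — which is immediate from the explicit expression for $\theta$ in \eqref{eqn:conttheta}.
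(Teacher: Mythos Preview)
Your proposal is correct and follows essentially the same route as the paper: both reduce to Proposition~\ref{christaeberhardriccati} via the substitution $\tilde\Gamma(t,-u)=-\Gamma(t,u)$, compute $-\theta(t,-\tilde u)$ explicitly, observe that (A2$^-$) for $(c_{zz},\mathscr C)$ becomes (A2$^+$) for $(-c_{zz},-\mathscr C)$, and then read off both assertions. Your write-up is in fact a bit more careful than the paper's in spelling out the even/odd decomposition of $\theta$ and the verification that the transformed coefficients satisfy (A2$^+$).
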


\begin{proof}
Fix $u \in S_d^-$.
Finding a solution $\Gamma(\cdot, u)$ to the ODE \eqref{eqn:conttheta} is equivalent to solving
\begin{align}
\label{eqn:odenegdef}
  -\frac{\partial \tilde \Gamma(t,-u)}{\partial t} & = \tilde \theta (t,\tilde \Gamma(t,-u)), \quad \tilde \Gamma(T,-u)=-u,
\end{align}
where $\tilde \Gamma (t,-u)= -\Gamma(t,u)$ and
\begin{align*}
  \tilde \theta(t,\tilde u) & = -\theta(t,- \tilde u) 
=-4 \tilde u \Sigma^{\top} c_{zz}(t) \Sigma \tilde u 
+ \mathscr L(t) \tilde u 
+ \tilde u \mathscr L^{\top}(t) 
+ B^*(\tilde u)
- \mathscr C(t)
\end{align*}
for $(t,\tilde u) \in [0,T] \times S_d^+$. 
Assumption {\rm (A2$^+$)} and Proposition \ref{christaeberhardriccati} imply that the ODE \eqref{eqn:odenegdef} has a unique solution $\tilde \Gamma(\cdot, -u) \in S_d^+$.
Hence the unique solution of \eqref{eqn:conttheta} is given by $\Gamma(\cdot ,u)= - \tilde \Gamma (\cdot,-u) \in S_d^-$.
The second assertion follows immediately.
\end{proof}

\subsection{Solution of generalized matrix Riccati ODEs}

Before we derive sufficient conditions for the existence and uniqueness 
of equation \eqref{eqn:matrixode} in the presence of jumps, we provide some properties of the function $\theta$ given in \eqref{eqn:matrixodetheta}.
This enables us to use a comparison result for ODEs and prove existence of \eqref{eqn:matrixode} with the help of the above existence results.
We introduce quasi-monotone increasing functions in the sense of \cite{v73}.

\begin{defn}
Let $U \subset S_d$ be a an open set.
We call a function $\theta: U \to S_d$ quasi-monotone increasing if for all $w,u \in U$ and $x \in S_d^+$ with $\operatorname{Tr}(wx)=0$ the inequality
\begin{align*}
  \operatorname{Tr}((\theta(w+u)-\theta(u)) x) \geq 0
\end{align*}
holds.
Correspondingly we call $\theta$ {\rm quasi-constant} if both $\theta$ and $-\theta$ are quasi-monotone increasing.
\end{defn}

We work under the following conditions which ensure that $\theta$, defined in \eqref{eqn:matrixodetheta} is quasi-monotone increasing.

\begin{itemize}
  \item[\bf{(A3$^+$)}] For all $t \in [0,T]$, the functions $g_M(t,\cdot)$, $g_x(t,\cdot)$, $g_{z \sqrt x} (t, \cdot)$ and $g_y(t, \cdot)$ are non-decreasing on $\R_+$.
For all $(t,r) \in [0,T] \times \R_+$, we have $g_M(t, r) \in \R_+$, $g_x(t,r) \in S_d^+$ and 
\begin{align*}
\Sigma^{\top} g_{z \sqrt x}(t,r)  
	+ g^{\top}_{z \sqrt x}(t,r)  \Sigma
+g_y(t,r)
& \succeq 0.
\end{align*}
  \item[\bf{(A3$^-$)}] For all $t \in [0,T]$, the functions $g_M(t,\cdot)$, $g_x(t,\cdot)$ are non-decreasing on $\R_-$ and the functions $g_{z \sqrt x} (t, \cdot)$, $g_y(t,\cdot)$ are non-increasing on $\R_-$.
For all $(t,r) \in [0,T] \times \R_-$, we have $g_M(t, r) \in \R_-$, $g_x(t,r) \in S_d^-$ and 
\begin{align*}
\Sigma^{\top}  g_{z \sqrt x}(t,r) 
	+ g^{\top}_{z \sqrt x}(t,r)  \Sigma
+g_y(t,r)
& \succeq 0.
\end{align*}
  \item[\bf{(A4$^+$)}] For all $t \in [0,T]$, the functions 
$g_{\hat z \hat z}(t,\cdot)$, 
$g_{\hat z z }(t ,\cdot)$, 
$g_{\hat z \sqrt x }(t ,\cdot)$
are non-decreasing on $\R_+$.
  For all $(t,y) \in [0,T] \times \R_+$ we have 
$g_{\hat z \hat z}(t,y) \in S_d^+$ and
\begin{align*} 
 \sigma^{\top}(t) a 
	 g_{\hat z  z} (t, y)
	\Sigma 
+ \Sigma^{\top}  g_{\hat z  z} (t,y)
a^{\top} \sigma(t)
&  \succeq 0,
\\ \nonumber
 \sigma^{\top}(t) a 
	 g_{\hat z \sqrt x} (t,y)
& \succeq 0.
\end{align*}
  \item[\bf{(A4$^-$)}] For all $t \in [0,T]$, the functions 
$g_{\hat z \hat z}(t,\cdot)$, 
$g_{\hat z \sqrt x }(t ,\cdot)$ 
are non-decreasing on $\R_-$ and 
$g_{\hat z z}(t,\cdot)$ 
are non-increasing on $\R_-$.
  For all $(t,y) \in [0,T]\times \R_-$ we have 
\begin{align*} 
 \sigma^{\top}(t) a 
	 g_{\hat z  z} (t, y)
	\Sigma 
+ \Sigma^{\top} g_{\hat z  z} (t, y)
a^{\top} \sigma(t)
&  \succeq 0,
\\ \nonumber
 \sigma^{\top}(t) a 
	 g_{\hat z \sqrt x} (t,y) 
& \preceq 0.
\end{align*}
\end{itemize}

\begin{lem}
\label{lem:qminc}
Suppose {\rm (A3$^+$)} and {\rm (A4$^+$)} hold.
Then the map $\theta(t,\cdot)$ 
is quasi-monotone increasing on $S_d^{++}$ for all $t \in [0,T]$.
\end{lem}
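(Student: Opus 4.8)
The plan is to verify directly that the defining inequality of quasi-monotonicity holds for the function $\theta(t,\cdot)$ in \eqref{eqn:matrixodetheta}, by splitting $\theta$ into its constituent terms and checking each one separately. Fix $t \in [0,T]$, take $w, u \in S_d^{++}$ and $x \in S_d^+$ with $\operatorname{Tr}(wx)=0$; I must show $\operatorname{Tr}\big((\theta(t,w+u)-\theta(t,u))x\big) \geq 0$. Note that from $\operatorname{Tr}(wx)=0$ and $w,x \in S_d^+$ one gets the stronger fact $wx = xw = 0$ (the product of two commuting positive semidefinite matrices with zero trace vanishes), which I will use repeatedly; in particular $\sqrt{w}\,x = 0$ and $x w = 0$.

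First I would handle the terms not involving the jump measures. The linear terms $\mathscr L(t)u + u\mathscr L^{\top}(t) + B^*(u)$ are affine in $u$, so their contribution to the difference is $\operatorname{Tr}\big((\mathscr L(t)w + w\mathscr L^{\top}(t) + B^*(w))x\big)$; since $wx=xw=0$, the first two summands vanish, and $\operatorname{Tr}(B^*(w)x) = \operatorname{Tr}(B(x)w) \geq 0$ follows from admissibility condition \eqref{eqn:lindrift} together with \cite{cfmt09} Lemma 4.1, exactly as in the proof of Proposition \ref{christaeberhardriccati}. The constant term $\mathscr C(t)$ cancels in the difference. The quadratic term $4u\Sigma^{\top}c_{zz}(t)\Sigma u$ contributes $4\operatorname{Tr}\big((w\Sigma^{\top}c_{zz}(t)\Sigma w + w\Sigma^{\top}c_{zz}(t)\Sigma u + u\Sigma^{\top}c_{zz}(t)\Sigma w)x\big)$; using cyclicity of the trace and $xw=wx=0$ each of these three summands vanishes (for instance $\operatorname{Tr}(w\Sigma^{\top}c_{zz}\Sigma w x) = \operatorname{Tr}(xw\,\Sigma^{\top}c_{zz}\Sigma w) = 0$), so the quadratic term contributes zero regardless of the sign of $c_{zz}$.

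Next I would treat the $\mu(d\xi)$-integral. For fixed $\xi$, the integrand involves $\operatorname{Tr}(u(\xi-\chi(\xi)))$, which is linear in $u$ and whose $w$-difference is $\operatorname{Tr}(w(\xi-\chi(\xi)))/(\|\xi\|^2\wedge 1) \cdot$(weight), nonnegative since $w \in S_d^+$ and $\xi - \chi(\xi), \mu(d\xi) \in S_d^+$ (this term is standard, cf. the form of $\mathscr R$ in Theorem \ref{thm:charaffineproc}); and $g_M(t,\operatorname{Tr}(w\xi+u\xi)) - g_M(t,\operatorname{Tr}(u\xi)) \geq 0$ because $g_M(t,\cdot)$ is non-decreasing on $\R_+$ by (A3$^+$) and $\operatorname{Tr}(u\xi) \geq 0$, $\operatorname{Tr}(w\xi) \geq 0$. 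Finally the $m(d\xi)$-integral: for fixed $\xi$ write $r_0 = \operatorname{Tr}(u\xi) \geq 0$ and $r_1 = \operatorname{Tr}((w+u)\xi) \geq r_0$; the $w$-difference of the integrand is
\begin{align*}
& \operatorname{Tr}\big(w(g_x(t,r_1)-g_x(t,r_0))\big) + \operatorname{Tr}\Big(u\big(\Sigma^{\top}g_{z\sqrt x}(t,r_1) + g_{z\sqrt x}^{\top}(t,r_1)\Sigma + g_y(t,r_1) I_d\big)x\Big)\cdot(\cdots) \\
& \quad + \text{(similar } \hat z\text{-terms with } g_{\hat z\hat z}, g_{\hat z z}, g_{\hat z\sqrt x}\text{)},
\end{align*}
where the bulk of the "$u$-coefficient" terms again vanish against $x$ because $ux$ need not vanish — here I must be more careful: the point is that $g_x(t,\cdot)$ is $S_d^+$-valued and non-decreasing (so $g_x(t,r_1)-g_x(t,r_0) \succeq 0$, giving $\operatorname{Tr}(w(\cdots)) \geq 0$), and the terms $\sigma^{\top}ag_{\hat z\hat z}a^{\top}\sigma$, the symmetrized $g_{\hat z z}$-term, $\sigma^{\top}ag_{\hat z\sqrt x}$ as well as $\Sigma^{\top}g_{z\sqrt x}+g_{z\sqrt x}^{\top}\Sigma + g_y$ are each evaluated at $r_1$ minus $r_0$; since these expressions are $\succeq 0$ at both arguments is not enough, so instead I exploit $xw = 0$ to kill all the terms carrying a factor $u$ times $x$, after first substituting $w+u$ for the argument, reducing everything to expressions linear in $w$ with $S_d^+$-valued coefficients. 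Concretely, after cancellation the surviving part of the $m(d\xi)$-difference is
\begin{align*}
\operatorname{Tr}\Big(w\,\big(g_x(t,r_1)-g_x(t,r_0)\big)\Big) + \operatorname{Tr}\Big(\sqrt{w}\,\big(\Sigma^{\top}(g_{z\sqrt x}(t,r_1)-\cdots) + \cdots\big)\sqrt{w}\Big),
\end{align*}
which is $\geq 0$ because each coefficient matrix is a difference of $S_d^+$-valued monotone functions evaluated at $r_1 \geq r_0$ and sandwiched by $\sqrt w$ or $w$. The main obstacle, and the step I expect to require the most care, is precisely this bookkeeping in the $m(d\xi)$-integral: one must correctly separate which bilinear-in-$(u,x)$ terms vanish by $xw=0$ versus which survive and rely on the sign/monotonicity hypotheses (A3$^+$), (A4$^+$), and ensure the split is done in a way that the surviving coefficients are genuinely $S_d^+$-valued at the argument $r_1$ minus their value at $r_0$. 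Once all pieces are shown nonnegative, summing (and integrating, using the integrability assumptions \eqref{eqn:integrabilitym}, \eqref{eqn:integrabilityM}, \eqref{eqn:integrability1} to justify interchanging the difference with the integrals) yields $\operatorname{Tr}\big((\theta(t,w+u)-\theta(t,u))x\big) \geq 0$, which is the claim.
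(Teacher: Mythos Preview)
Your overall strategy---split $\theta$ into its constituent pieces, use $wx=xw=0$ (from $\operatorname{Tr}(wx)=0$, \cite{cfmt09} Lemma 4.1), and check each piece---is the same as the paper's. But two of your individual verifications are wrong, and a third is confused.

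First, you claim $\operatorname{Tr}(B^*(w)x)=\operatorname{Tr}(B(x)w)\geq 0$ ``by admissibility \eqref{eqn:lindrift}, exactly as in Proposition \ref{christaeberhardriccati}''. That proposition is in the \emph{continuous} case ($\mu=0$), where \eqref{eqn:lindrift} does reduce to $\operatorname{Tr}(B(x)w)\geq 0$. In the general case it only gives
\[
\operatorname{Tr}(B(x)w)\;-\;\int_{S_d^+\setminus\{0\}}\operatorname{Tr}(\chi(\xi)w)\,M(x,d\xi)\;\geq\;0,
\]
and the subtracted integral need not be nonnegative. Relatedly, your treatment of the $\mu$-integral asserts $\xi-\chi(\xi)\in S_d^+$, which is false for a generic truncation function $\chi$. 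The paper's fix is to split $\operatorname{Tr}(u(\xi-\chi(\xi)))=\operatorname{Tr}(u\xi)-\operatorname{Tr}(u\chi(\xi))$ and regroup: the map
\[
u\;\mapsto\;B^*(u)\;-\;\int_{S_d^+\setminus\{0\}}\frac{\operatorname{Tr}(u\chi(\xi))}{\|\xi\|^2\wedge 1}\,\mu(d\xi)
\]
is quasi-monotone increasing precisely by \eqref{eqn:lindrift} (after noting $M(x,d\xi)=\operatorname{Tr}(x\mu(d\xi))/(\|\xi\|^2\wedge 1)$), while $u\mapsto\int\frac{g_M(t,\operatorname{Tr}(u\xi))+\operatorname{Tr}(u\xi)}{\|\xi\|^2\wedge 1}\,\mu(d\xi)$ is quasi-monotone increasing because the scalar numerator is nonnegative and nondecreasing and $\mu(d\xi)\in S_d^+$. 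You must make this regrouping; treating $B^*$ and the $\chi$-part separately does not work.

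Second, your $m(d\xi)$-bookkeeping has the cancellation backwards. From $xw=0$ you can kill the terms carrying a matrix factor $w$ against $x$, \emph{not} the terms carrying a factor $u$ against $x$ (there is no reason $ux=0$). Concretely, writing $r_0=\operatorname{Tr}(u\xi)$, $r_1=\operatorname{Tr}((w+u)\xi)$, the surviving contribution to $\operatorname{Tr}((\cdots)x)$ after cancellation is
\[
\operatorname{Tr}\Bigl(x\Bigl[u\Sigma^{\top}\bigl(g_{z\sqrt x}(t,r_1)-g_{z\sqrt x}(t,r_0)\bigr)+\bigl(g_{z\sqrt x}^{\top}(t,r_1)-g_{z\sqrt x}^{\top}(t,r_0)\bigr)\Sigma u+u\bigl(g_y(t,r_1)-g_y(t,r_0)\bigr)+g_x(t,r_1)-g_x(t,r_0)\Bigr]\Bigr),
\]
and similarly for the $\hat z$-terms. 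This is where the monotonicity and sign hypotheses in (A3$^+$), (A4$^+$) are used to conclude nonnegativity; there is no $\sqrt w(\cdots)\sqrt w$ structure as you suggest, since the $w$-parts have already been annihilated by $xw=0$.
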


\begin{proof}
Consider an arbitrary $t \in [0,T]$ and fix $w,v \in S_d^{++}$ and $r \in S_d^+$ such that $\operatorname{Tr}(wr)=0$.
First we consider the function
\begin{align*}
  u \mapsto 4 u \Sigma^{\top} c_{zz}(t) \Sigma u, \quad u \in S_d^{++},
\end{align*}
and prove it is quasi-constant.
Indeed, we have
\begin{align*}
&  \operatorname{Tr}\left(\left((w+v)4\Sigma^{\top} c_{zz}(t) \Sigma(w+v)-v 4\Sigma^{\top} c_{zz}(t) \Sigma v \right)r\right)
\\
&
= \operatorname{Tr}\left(w4\Sigma^{\top} c_{zz}(t) \Sigma w r\right) + \operatorname{Tr}\left(v 4\Sigma^{\top} c_{zz}(t) \Sigma wr\right) + \operatorname{Tr}\left(w4\Sigma^{\top} c_{zz}(t) \Sigma vr\right)
\\
&=0,
\end{align*}
where the last equality follows from $\operatorname{Tr}(wr)=0$ being equivalent to $rw=wr=0$, compare \cite{cfmt09} Lemma 4.1.
With the same reasoning we have
\begin{align*}
  u \mapsto u \mathscr L^{\top}(t) +  u \mathscr L(t) + \mathscr C(t), \quad u \in S_d^{++},
\end{align*}
is quasi-constant.
Moreover due to the admissibility condition \eqref{eqn:lindrift} the function
\begin{align*}
  u \mapsto B^*(u) - \int_{S_d^+ \setminus \{0\}} 
\frac{\operatorname{Tr}(u\chi(\xi))}{\parallel \xi \parallel^2 \wedge 1} \mu(d\xi)  , \quad u \in S_d^{++},
\end{align*}
is quasi-monotone increasing.
From the definition of $\mu$ together with the fact that $g_M$ is non-decreasing in its spatial variable and $\operatorname{Tr}(xy)\geq 0$ for all $x,y \in S_d^+$, the map
\begin{align*}
  u \mapsto 
\int_{S_d^+ \setminus \{0\}} 
\frac{g_M(t,\operatorname{Tr}(u\xi)) + \operatorname{Tr}(u\xi)}{\parallel \xi \parallel^2 \wedge 1} \mu(d\xi), \quad u \in S_d^{++},
\end{align*}
is quasi-monotone increasing.
Due to {\rm (A3$^+$)} we have
\begin{align*}
&\operatorname{Tr} \Biggl( r
\int_{S_d^+ \setminus \{0\}} \Bigl(
(w+v) \Sigma^{\top} g_{z \sqrt{x}}(t,\operatorname{Tr}((w+v)\xi))
+ g_{z \sqrt{x}}^{\top}(t,\operatorname{Tr}((w+v)\xi)) \Sigma (w+v) 
\\
&  \hspace{2.6cm}
-v \Sigma^{\top} g_{z \sqrt{x}}(\operatorname{Tr}(t,\operatorname{Tr}(v\xi))) 
- g_{z \sqrt{x}}^{\top}(t,\operatorname{Tr}(v\xi)) \Sigma v 
+ (w+v) g_y(t,\operatorname{Tr}((w+v)\xi))
\\
& \hspace{2.6cm}
- v g_y(t,\operatorname{Tr}(v\xi))
+ g_x(t,\operatorname{Tr}((w+v)\xi)) 
- g_x(t,\operatorname{Tr}(v\xi)) 
\Bigr) m(d\xi)
\Biggr)
\\
&=
\operatorname{Tr} \Biggl( r
\int_{S_d^+ \setminus \{0\}} \Bigl(
v \Sigma^{\top} g_{z \sqrt{x}}(t,\operatorname{Tr}((w+v)\xi))
+ g_{z \sqrt{x}}^{\top}(t,\operatorname{Tr}((w+v)\xi)) \Sigma v
\\
& \hspace{3cm}
-v \Sigma^{\top} g_{z \sqrt{x}}(\operatorname{Tr}(t,\operatorname{Tr}(v\xi))) 
- g_{z \sqrt{x}}^{\top}(t,\operatorname{Tr}(v\xi)) \Sigma v 
+ v g_y(t,\operatorname{Tr}((w+v)\xi)) 
\\
& \hspace{3cm}
- v g_y(t,\operatorname{Tr}(v\xi)) 
+ g_x(t,\operatorname{Tr}((w+v)\xi)) 
- g_x(t,\operatorname{Tr}(v\xi)) 
\Bigr) m(d\xi)
\Biggr)
\\
& \geq 0
,
\end{align*}
which implies that
\begin{align*}
u \mapsto &
\int_{S_d^+ \setminus \{0\}} \Bigl(
 u \Sigma^{\top} g_{z \sqrt{x}}(t,\operatorname{Tr}(u\xi))
+ g_{z \sqrt{x}}^{\top}(t,\operatorname{Tr}(u\xi)) \Sigma u 
+ g_{y}((t,\operatorname{Tr}(u\xi)) 
+ g_x(t,\operatorname{Tr}(u\xi)) 
\Bigr) m(d\xi)
,
\end{align*}
is a quasi-monotone increasing function for all $u \in S_d^{++}$.
We finally obtain from (A4$^+$) that the function
\begin{align*}
&u \mapsto \int_{ S_d^+ \setminus \{0\}} 
\left(
\sigma^{\top}(t) a g_{\hat z \hat z} (t, \operatorname{Tr}(u\xi)) a^{\top} \sigma(t)
+ \sigma^{\top}(t) a g_{\hat z  z} (t, \operatorname{Tr}(u\xi)) \Sigma u
 \right.
 \\ \nonumber
 &  \hspace{2.5cm}  \left.
+ u \Sigma^{\top} g^{\top}_{\hat z  z} (t, \operatorname{Tr}(u\xi)) a^{\top} \sigma(t)
+ \sigma^{\top}(t) a g_{\hat z \sqrt x} (t, \operatorname{Tr}(u\xi))
\right) m(d\xi),
\end{align*}
is quasi-monotone increasing for all $u \in S_d^{++}$.
\end{proof}

In the following Lemma we show that the growth of $\theta(t,u)$ in $u$, where $(t,u) \in [0,T] \times S_d^+$, can be controlled under suitable conditions.

\begin{itemize}
  \item[\bf{(A5$^+$)}] 
The function $c_{zz}$ has values in $S_d^-$ and $g_M$ and $g_x$ are 
of linear growth in the spatial variable, i.e. there exist continuous functions $C_M:[0,T]\to \R_{++}$ and $C_x:[0,T] \to S_d^{++}$ such that
\begin{align*}
g_M(t,y) & \leq C_M(t) ( |y| + 1),
\\ \nonumber
 g_x(t,y) & \preceq C_x(t) ( |y| + 1),
\end{align*}
for all $(t,y) \in [0,T]\times \R_+$.
Also $g_{z \sqrt x}$ and $g_y$ are bounded above in the spatial variable, i.e. there exists a matrix valued continuous function $C_{z \sqrt x}:[0,T]\to S_d^{++}$ such that
\begin{align*}
g_{z \sqrt x}(t,r) + g_y(t,r) \preceq C_{z \sqrt x}(t) , \quad \mbox{ for all } (t,r) \in [0,T]\times \R_+.
\end{align*}
  \item[\bf{(A5$^-$)}] 
The function $c_{zz}$ has values in $S_d^+$ and $-g_M$ and $-g_x$ are 
of linear growth in the spatial variable, i.e. there exist continuous functions $C_M:[0,T]\to \R_{++}$ and $C_x:[0,T] \to S_d^{++}$ such that
\begin{align*}
-g_M(t,y) & \leq C_M(t) ( |y| + 1),
\\ \nonumber
- g_x(t,y) & \preceq C_x(t) ( |y| + 1),
\end{align*}
for all $(t,y) \in [0,T]\times \R_-$.
Also $g_{z \sqrt x}$ and $g_y$ are bounded above in the spatial variable, i.e. there exists a matrix valued continuous function $C_{z \sqrt x}:[0,T]\to S_d^{++}$ such that
\begin{align*}
 g_{z \sqrt x}(t,r) +g_y(t,r)\preceq C_{z \sqrt x}(t) , \quad \mbox{ for all } (t,r) \in [0,T]\times \R_-.
\end{align*}
  \item[\bf{(A6$^+$)}]  The functions  
$g_{\hat z \hat z}$ and
$g_{\hat z \sqrt x}$, 
are of linear growth in the spatial variable, i.e. there exist continuous functions 
$C_{\hat z \hat z}:[0,T] \to S_d^{++}$ such that
\begin{align*}
 g_{\hat z \hat z}(t,y) +  g_{\hat z \sqrt x}(t,y)& \preceq C_{\hat z \hat z}(t) (|y|+1), 
\end{align*}
for all $(t,y) \in [0,T]\times \R_+ $.
Also 
$g_{\hat z z}$ is bounded above in the spatial variable, i.e. there exists a continuous function $C_{\hat z z }:[0,T] \to S_d^{++}$ such that
\begin{align*}
g_{\hat z z}(t,y) & \preceq C_{\hat z z}(t) ,\quad \mbox{ for all } (t,y) \in [0,T]\times \R_+.
\end{align*}
 \item[\bf{(A6$^-$)}]  The functions 
$-g_{\hat z \hat z}$ and
$-g_{\hat z \sqrt x}$, 
are of linear growth in the spatial variable, i.e. there exists a continuous function 
$C_{\hat z \hat z}:[0,T] \to S_d^{++}$ such that
\begin{align*}
 -g_{\hat z \hat z}(t,y) -  g_{\hat z \sqrt x}(t,y)& \preceq C_{\hat z \hat z}(t) (|y|+1), 
\end{align*}
for all $(t,y) \in [0,T]\times \R_- $.
Also 
$g_{\hat z z}$ is bounded above in the spatial variable, i.e. there exists a continuous function
$C_{\hat z z }:[0,T] \to S_d^{++}$ such that
\begin{align*}
g_{\hat z z}(t,y) & \preceq C_{\hat z z}(t) ,\quad \mbox{ for all } (t,y) \in [0,T]\times \R_-.
\end{align*}
\end{itemize}

\begin{lem}
\label{lem:u2+1}
Let {\rm (A5$^+$)} and {\rm (A6$^+$)} hold.
Then there exists a continuous function $K:[0,T]\to \R_{++}$ such that for $(t,u) \in [0,T] \times S_d^+$ the following inequality holds
\begin{align*}
  \operatorname{Tr}(u \theta(t,u)) \leq K(t) \left( \parallel u \parallel^2 +1\right).
\end{align*}
\end{lem}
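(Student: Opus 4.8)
The plan is to expand $\operatorname{Tr}(u\,\theta(t,u))$, by linearity of the trace, into the contributions of the individual summands of \eqref{eqn:matrixodetheta}, to dominate each contribution by $K_i(t)\bigl(\parallel u\parallel^2+1\bigr)$ for some continuous $K_i:[0,T]\to\R_{++}$, and then to take $K:=\sum_i K_i$. Throughout I would abbreviate $r:=\operatorname{Tr}(u\xi)\ge 0$ (recall that $u,\xi\in S_d^+$) and use only three elementary facts: submultiplicativity of $\parallel\cdot\parallel$, the identity $\operatorname{Tr}(A)=\operatorname{Tr}(A^\top)$, and monotonicity of $q\mapsto\operatorname{Tr}(uq)$ on $S_d^+$ for fixed $u\in S_d^+$ (equivalently $\operatorname{Tr}(pq)\ge 0$ for $p,q\in S_d^+$).

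The terms outside the two integrals are the easy part, but they carry the one structural point of the proof: the quadratic term has the right sign. By cyclicity of the trace and $u=u^{1/2}u^{1/2}$ one has $4\operatorname{Tr}\bigl(u\cdot u\Sigma^\top c_{zz}(t)\Sigma u\bigr)=4\operatorname{Tr}\bigl((\Sigma u^{3/2})^\top c_{zz}(t)\,\Sigma u^{3/2}\bigr)\le 0$, since $c_{zz}(t)\in S_d^-$ by {\rm (A5$^+$)}; this is exactly what prevents $\operatorname{Tr}(u\,\theta(t,u))$ from growing cubically in $\parallel u\parallel$. The remaining non-integral pieces $\operatorname{Tr}\bigl(u(\mathscr L(t)u+u\mathscr L^\top(t)+B^*(u)+\mathscr C(t))\bigr)$ are bounded by $\bigl(2\parallel\mathscr L(t)\parallel+\parallel B^*\parallel_{\mathrm{op}}+\parallel\mathscr C(t)\parallel\bigr)\bigl(\parallel u\parallel^2+1\bigr)$ via submultiplicativity, with $t\mapsto(\mathscr L(t),\mathscr C(t))$ continuous by its definition in terms of the continuous coefficient functions and $B^*$ a fixed bounded linear operator.

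The real work is in the two jump integrals. For the $\mu$-integral I would use $\operatorname{Tr}(u\,\mu(d\xi))=(\parallel\xi\parallel^2\wedge1)\,M(u,d\xi)$ and $M(u,\cdot)\ge 0$ to rewrite $\operatorname{Tr}(u\,\cdot)$ of it as $\int_{S_d^+\setminus\{0\}}\bigl(\operatorname{Tr}(u(\xi-\chi(\xi)))+g_M(t,r)\bigr)\,M(u,d\xi)$; since $\xi-\chi(\xi)$ vanishes near $0$, the first summand is estimated on $\{\delta<\parallel\xi\parallel\le1\}$ through $M(u,d\xi)\le\delta^{-2}\operatorname{Tr}(u\mu(d\xi))$ and on $\{\parallel\xi\parallel>1\}$ through the finite positive matrix $\int_{\parallel\xi\parallel>1}\parallel\xi\parallel\mu(d\xi)\in S_d^+$ (finite by \eqref{eqn:integrability1}), giving $\le C\parallel u\parallel^2$, while the $g_M$-summand is handled by the nonnegativity and linear-growth bound on $g_M$ from {\rm (A5$^+$)} together with \eqref{eqn:integrabilitym} and \eqref{eqn:integrability1}. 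The $m$-integral is treated in the same spirit: after taking $\operatorname{Tr}(u\,\cdot)$ and using cyclicity it becomes $\int_{S_d^+\setminus\{0\}}\bigl(\operatorname{Tr}\bigl(u^2[\Sigma^\top g_{z\sqrt x}(t,r)+g_{z\sqrt x}^\top(t,r)\Sigma+g_y(t,r)I_d]\bigr)+\operatorname{Tr}(u\,g_x(t,r))+(\text{the }\hat z\text{-terms})\bigr)\,m(d\xi)$; {\rm (A5$^+$)} bounds the first term so that it is $\le C(t)\parallel u\parallel^2$, the monotonicity of the trace and {\rm (A5$^+$)} give $\operatorname{Tr}(u\,g_x(t,r))\le\operatorname{Tr}(u\,C_x(t))(|r|+1)\le\parallel u\parallel\parallel C_x(t)\parallel(\parallel u\parallel\,\parallel\xi\parallel+1)$, and {\rm (A6$^+$)} disposes of the $\hat z$-terms analogously; integrating against $m$ and using once more \eqref{eqn:integrabilitym} and \eqref{eqn:integrability1} produces $\le K_m(t)\bigl(\parallel u\parallel^2+1\bigr)$.

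The step I expect to be the main obstacle is precisely this last group of estimates: one has to pair each $g$-function with the correct kernel factor so that the linear-growth hypotheses are not wasted, and one must be careful near $\xi=0$, where $M(u,\cdot)$ carries a $\parallel\xi\parallel^{-2}$ singularity and $m$ may have infinite mass — here one relies on the standing assumption that the integrands of $\theta$ are integrable (which, together with the monotonicity of the $g$-functions, forces them to vanish suitably at $0$ and hence to be bounded) and on $\int_{S_d^+\setminus\{0\}}(\parallel\xi\parallel\wedge1)\,m(d\xi)<\infty$. Everything else is bookkeeping: once each summand has been dominated by a continuous multiple of $\parallel u\parallel^2+1$, summing the finitely many resulting continuous functions yields the desired $K$.
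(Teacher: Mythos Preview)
Your proposal is correct and follows essentially the same route as the paper: use $c_{zz}(t)\in S_d^-$ to discard the quadratic term, then exploit the linear-growth and upper-bound hypotheses of (A5$^+$), (A6$^+$) together with \eqref{eqn:integrabilitym} and \eqref{eqn:integrability1} to dominate the remaining pieces by continuous multiples of $\parallel u\parallel^2+1$. The only organizational difference is that the paper first bounds $\theta(t,u)$ itself in the Loewner order (replacing each $g$-function by its majorant from the assumptions) and then applies $\operatorname{Tr}(u\,\cdot\,)$ in one stroke, whereas you take $\operatorname{Tr}(u\,\cdot\,)$ first and estimate term by term; the two are equivalent, and your treatment is in fact more explicit about the integrability near $\xi=0$ and at infinity than the paper's rather terse argument.
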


\begin{proof}
For every $(t,u) \in [0,T] \times S_d^{+}$ we deduce that
\begin{align*}
 \theta(t,u) 
& \preceq
  u \mathscr L^{\top}(t) + \mathscr L u + B^*(u) + \mathscr C(t)
\\
& \quad + \int_{S_d^+ \setminus \{0\} } 
\frac{C_M(t) \left(\operatorname{Tr}\left(u\xi \right) +1 \right)  + \operatorname{Tr}\left(u(\xi-\chi(\xi))\right)}{\parallel \xi \parallel^2 \wedge 1} \mu(d\xi) 
\\
& \quad 
+ \int_{S_d^+ \setminus \{0\}} \Bigl(
 u \Sigma^{\top} C_{z \sqrt x}(t) 
+ C^{\top}_{z \sqrt x}(t) \Sigma u 
+ C_x(t) (\operatorname{Tr}\left(u\xi \right) +1) 
\\ \nonumber
& \hspace{2.5cm} 
+ \sigma^{\top}(t) a C_{\hat z \hat z} (\operatorname{Tr}(u\xi) + 1) a^{\top} \sigma(t)
+  \sigma^{\top}(t) a C_{\hat z z}(t) \Sigma u
\\ \nonumber
& \hspace{2.5cm} 
+ \sigma^{\top}(t) a C_{\hat z \hat z}(t) (\operatorname{Tr}\left(u\xi \right) +1)
\Bigr) m(d\xi)
,
\end{align*}
where we have used the Assumptions (A5$^+$) and (A6$^+$). 
In particular for $u \in S_d^+$ it then follows that
\begin{align*}
 \operatorname{Tr}(u \theta(t,u)) 
& \leq
 \operatorname{Tr}(uu \mathscr L^{\top}(t)) 
+ \operatorname{Tr}( u\mathscr L(t) u) + \operatorname{Tr}(uB^*(u)) 
+ \operatorname{Tr}(u \mathscr C(t)) 
\\
& \quad 
+ \operatorname{Tr}\left( u\int_{S_d^+ \setminus \{0\} } 
\frac{ C_M(t) \left(\operatorname{Tr}\left(u\xi \right) +1 \right)+  \operatorname{Tr}\left(u(\xi-\chi(\xi))\right)}{\parallel \xi \parallel^2 \wedge 1} \mu(d\xi) \right)
\\
& \quad 
+ \int_{S_d^+ \setminus \{0\}} 
\operatorname{Tr} \biggl( u \Bigl(
u \Sigma^{\top} C_{z \sqrt x}(t) 
+ C^{\top}_{z \sqrt x}(t) \Sigma u 
+ C_x(t) (\operatorname{Tr}\left(u\xi \right) +1) 
\\ \nonumber
& \hspace{3cm}  
+ \sigma^{\top}(t) a C_{\hat z \hat z} (\operatorname{Tr}\left(u\xi \right) +1) a^{\top} \sigma(t)
+ \sigma^{\top}(t) a C_{\hat z z}(t) \Sigma u
 \\ \nonumber
 & \hspace{3cm}  
+ \sigma^{\top}(t) a C_{\hat z \hat z}(t) (\operatorname{Tr}\left(u\xi \right) +1)
\Bigr) \biggr) m(d\xi)
,
\end{align*}
which gives the result.
\end{proof}

The following theorem goes back to \cite{v73}.
In the form stated here, it is adjusted to symmetric matrices. 
\begin{thm}
\label{thm:volkmann}
  Let $U \subset S_d$ be an open set.
Suppose $\theta: [0,T) \times U \to S_d$ is a continuous locally Lipschitz map with $\theta(t, \cdot)$ quasi-monotone increasing on $U$ for all $t \in [0,T)$.
Given $t_0 \in (0, T]$ let $x,y: [0,t_0) \to U$ be differentiable maps such that $x(0) \preceq y(0)$ and 
\begin{align*}
 \frac{dx(t)}{dt}-\theta(t,x(t)) \preceq \frac{dy(t)}{dt}-\theta(t,y(t)), \quad t \in [0,t_0].
\end{align*}
Then we have $x(t) \preceq y(t)$ for all $t \in [0,t_0)$.
\end{thm}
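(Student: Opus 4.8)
The plan is to run the classical argument for quasi-monotone differential inequalities, in the form going back to Volkmann \cite{v73}: first prove a \emph{strict} comparison statement by a first-exit-time argument at the boundary of the order cone, then remove the strictness by an $\eps$-perturbation together with a limiting argument. Throughout, $\preceq$, $\prec$ denote the Loewner order induced by $S_d^+$, and I would use repeatedly the elementary facts that $M\in S_d^+$ iff $\operatorname{Tr}(Mr)\ge 0$ for all $r\in S_d^+$; that $M\in\partial S_d^+$ iff $M\in S_d^+$ and $\operatorname{Tr}(Mr)=0$ for some $r\in S_d^+\setminus\{0\}$; and that $M\in S_d^{++}$, $r\in S_d^+\setminus\{0\}$ force $\operatorname{Tr}(Mr)>0$ (together with $\|N\|\le c\Rightarrow N\preceq cI_d$ for $N\in S_d$).

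\emph{Step 1 (strict comparison).} I would first show: if moreover $x(0)\prec y(0)$ and $x'(t)-\theta(t,x(t))\prec y'(t)-\theta(t,y(t))$ on $[0,t_0)$, then $x(t)\prec y(t)$ on $[0,t_0)$. Put $v:=y-x$, so $v(0)\succ0$. If the claim fails, a first-exit-time argument (using that $S_d^{++}$ is open and $S_d^+$ closed) produces $t_1\in(0,t_0)$ with $v(t)\succ0$ on $[0,t_1)$ and $v(t_1)\in\partial S_d^+$, hence some $r\in S_d^+\setminus\{0\}$ with $\operatorname{Tr}(v(t_1)r)=0$. The scalar map $g(t):=\operatorname{Tr}(v(t)r)$ is differentiable, positive on $[0,t_1)$ and zero at $t_1$, so $g'(t_1)\le0$. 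But from
\begin{align*}
v'(t_1)=\bigl[\theta(t_1,y(t_1))-\theta(t_1,x(t_1))\bigr]+\Bigl(\bigl[y'(t_1)-\theta(t_1,y(t_1))\bigr]-\bigl[x'(t_1)-\theta(t_1,x(t_1))\bigr]\Bigr),
\end{align*}
quasi-monotonicity of $\theta(t_1,\cdot)$ (applied to the increment $v(t_1)\succeq0$, which satisfies $\operatorname{Tr}(v(t_1)r)=0$) makes the first bracket's trace against $r$ non-negative, while the second summand is $\succ0$ and hence has strictly positive trace against $r\neq0$; thus $g'(t_1)=\operatorname{Tr}(v'(t_1)r)>0$, a contradiction.

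\emph{Step 2 (perturbation and limit).} Fix $t_1\in(0,t_0)$. Since $\theta$ is locally Lipschitz and $\{(t,y(t)):t\in[0,t_1]\}$ is compact, there are $\delta,L>0$ such that $\theta(t,\cdot)$ is $L$-Lipschitz on the $\delta$-ball about $y(t)$ (which lies in $U$), uniformly in $t\in[0,t_1]$. For $\eps>0$ set $\tilde y_\eps(t):=y(t)+\eps e^{\kappa t}I_d$ with $\kappa>L\sqrt d$; for $\eps$ small $\tilde y_\eps(t)$ stays in that ball (hence in $U$), so $\theta(t,\tilde y_\eps(t))\preceq\theta(t,y(t))+L\sqrt d\,\eps e^{\kappa t}I_d$, whence $\tilde y_\eps'(t)-\theta(t,\tilde y_\eps(t))\succeq[y'(t)-\theta(t,y(t))]+(\kappa-L\sqrt d)\eps e^{\kappa t}I_d\succ x'(t)-\theta(t,x(t))$ and $\tilde y_\eps(0)=y(0)+\eps I_d\succ x(0)$. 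Step 1 applied to $x$ and $\tilde y_\eps$ on $[0,t_1]$ gives $x(t)\prec\tilde y_\eps(t)$ there; letting $\eps\downarrow0$ yields $x(t)\preceq y(t)$ on $[0,t_1]$, and since $t_1<t_0$ is arbitrary, $x(t)\preceq y(t)$ on $[0,t_0)$.

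The step I expect to require the most care is the boundary analysis in Step 1: correctly setting up the first-exit time from the open cone $S_d^{++}$, extracting the witnessing functional $r$, and coupling the one-sided bound $g'(t_1)\le0$ with the quasi-monotonicity inequality \emph{in the right direction}. A minor but necessary technicality is keeping the perturbed curve inside $U$, which is why Step 2 is carried out on closed subintervals $[0,t_1]$ with $t_1\uparrow t_0$ rather than directly on $[0,t_0)$; the remaining Lipschitz/continuous-dependence estimates and the facts about $S_d^+$ are routine.
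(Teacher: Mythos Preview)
The paper does not actually prove this theorem: it is stated with the remark ``The following theorem goes back to \cite{v73}. In the form stated here, it is adjusted to symmetric matrices,'' and is then used without proof. So there is no in-paper argument to compare against.

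Your proposal is the standard Volkmann argument and is correct. The two steps---strict comparison via a first-exit-time contradiction at $\partial S_d^+$, then an $\eps e^{\kappa t}I_d$ perturbation exploiting the local Lipschitz constant to manufacture the strict inequalities---are exactly the classical route. One small point worth tightening: the paper's Definition of quasi-monotone increasing is phrased with ``$w,u\in U$'', but at the exit time you apply it with $w=v(t_1)\in\partial S_d^+$, which need not lie in $U$ (in the main application $U=S_d^{++}$). In practice the intended hypothesis is that $u$ and $u+w$ lie in $U$ with $w\succeq 0$ and $\operatorname{Tr}(wr)=0$; your use of it is the correct one, and it matches how the paper itself verifies quasi-monotonicity in Lemma~\ref{lem:qminc}. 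You might flag this as a harmless imprecision in the definition rather than a gap in your argument. Everything else---the derivation $g'(t_1)\le 0$, the splitting of $v'(t_1)$, the Lipschitz bound $\theta(t,\tilde y_\eps(t))-\theta(t,y(t))\preceq L\sqrt d\,\eps e^{\kappa t}I_d$, and the exhaustion by closed subintervals $[0,t_1]$---is in order.
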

We will use this theorem for proving an existence and uniqueness result for the generalized Riccati ODE \eqref{eqn:matrixode}.

\begin{itemize}
  \item[\bf(A7)] The mappings $g_M, g_{ z \sqrt x},g_x,g_y,
g_{\hat z \hat z},
g_{\hat z z},$
and $g_{\hat z \sqrt x}$ are locally Lipschitz continuous in the spatial variable.

\end{itemize}
Note that the map $\theta(t,\cdot)$ may not be Lipschitz at the boundary $\partial S_d^+$, see \cite{dfs03} Example 9.3.
So we look for a solution of \eqref{eqn:matrixode} in $S_d^{++}$.

\begin{thm}
\label{prop:existencematrixode}
Let the map $t \mapsto (c_{zz}(t),\mathscr L(t),\mathscr C(t))$ be real analytic and suppose {\rm (A2$^+$)}, {\rm (A3$^+$)}, {\rm (A4$^+$)}, {\rm (A5$^+$)}, {\rm (A6$^+$)} 
and {\rm (A7)} hold.
Then the ODE \eqref{eqn:matrixode} has a unique solution $\Gamma(\cdot,u) \in S_d^{++}$ for every terminal value $u \in S_d^{++}$.
\end{thm}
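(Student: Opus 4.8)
The plan is to prove that \eqref{eqn:matrixode} — equivalently the forward form \eqref{eqn:matrixodeforward} — admits a unique solution on all of $[0,T]$ taking values in the open cone $S_d^{++}$, by combining the Picard--Lindel\"of local theory with two a priori estimates. First, by \textrm{(A7)} and the assumed time-continuity of all coefficients, the map $\theta(t,\cdot)$ on the right-hand side of \eqref{eqn:matrixodeforward} is continuous and locally Lipschitz in $u$ on $S_d^{++}$, uniformly for $t$ in compacts (one must work on the open cone because, as noted after \textrm{(A7)}, $\theta(t,\cdot)$ may fail to be Lipschitz on $\partial S_d^+$). Hence every $u\in S_d^{++}$ has a unique maximal solution $\Gamma(\cdot,u)$ on a maximal interval $(\tau,T]$, and if $\tau>0$ then $\Gamma(t,u)$ leaves every compact subset of $S_d^{++}$ as $t\downarrow\tau$: either $\|\Gamma(t,u)\|\to\infty$ or its smallest eigenvalue $\lambda_{\min}(\Gamma(t,u))$ tends to $0$. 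It then suffices to exclude both alternatives.

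Next I would rule out blow-up. On $(\tau,T]$ the solution lies in $S_d^{++}\subset S_d^+$, so Lemma~\ref{lem:u2+1} applies — this is where \textrm{(A5$^+$)} and \textrm{(A6$^+$)} enter — and provides a continuous $K:[0,T]\to\R_{++}$ with $\operatorname{Tr}(\Gamma(t,u)\,\theta(t,\Gamma(t,u)))\le K(t)(\|\Gamma(t,u)\|^2+1)$. Consequently $t\mapsto\|\Gamma(t,u)\|^2$ obeys a linear differential inequality, and Gronwall's lemma bounds it on $(\tau,T]$ by a constant depending only on $\|u\|$ and $\int_0^T K(s)\,ds$; so $\sup_{t\in(\tau,T]}\|\Gamma(t,u)\|<\infty$.

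The heart of the argument is keeping $\Gamma(\cdot,u)$ away from $\partial S_d^+$. By Lemma~\ref{lem:qminc} — using \textrm{(A3$^+$)} and \textrm{(A4$^+$)} — the map $\theta(t,\cdot)$ is quasi-monotone increasing on $S_d^{++}$, which makes the comparison Theorem~\ref{thm:volkmann} applicable with $U=S_d^{++}$. I would compare $\Gamma(\cdot,u)$ from below with the solution $\underline\Gamma(\cdot,u)$ of the continuous Riccati ODE \eqref{eqn:conttheta} having the same terminal value $u$: under \textrm{(A2$^+$)} and the real-analyticity of $t\mapsto(c_{zz}(t),\mathscr L(t),\mathscr C(t))$, Proposition~\ref{christaeberhardriccati} guarantees that $\underline\Gamma(\cdot,u)$ exists on all of $[0,T]$ and stays in $S_d^{++}$, so $\delta:=\inf_{[0,T]}\lambda_{\min}(\underline\Gamma(\cdot,u))>0$. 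It then remains to check, using the monotonicity and sign conditions of \textrm{(A3$^+$)}, \textrm{(A4$^+$)} (and the admissibility relations exploited in the proof of Lemma~\ref{lem:qminc}), that the jump part of $\theta$ enters the associated differential inequality with the sign required by Theorem~\ref{thm:volkmann}, yielding $\underline\Gamma(t,u)\preceq\Gamma(t,u)$ on $(\tau,T]$ and hence $\lambda_{\min}(\Gamma(t,u))\ge\delta$ there.

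Combining the last two estimates, $\Gamma(\cdot,u)$ is confined on its whole interval of existence to the compact set $\{v\in S_d^+:\lambda_{\min}(v)\ge\delta,\ \|v\|\le C\}\subset S_d^{++}$; by the escape-from-compacts alternative this is impossible unless $\tau=0$, so the solution is global and $S_d^{++}$-valued, and uniqueness on $[0,T]$ follows from the local uniqueness in the first step. The main obstacle will be this third step: since $\theta$ is Lipschitz only on the open cone, one must genuinely prevent the solution from touching $\partial S_d^+$, which is feasible only because the real-analyticity hypothesis promotes the comparison solution of Proposition~\ref{christaeberhardriccati} from $S_d^+$ to $S_d^{++}$; and verifying that the jump terms in $\theta$ respect the matrix partial order finely enough for the comparison theorem — more than the quasi-monotonicity of Lemma~\ref{lem:qminc} already gives — is the delicate point, where \textrm{(A3$^+$)} and \textrm{(A4$^+$)} are used in full.
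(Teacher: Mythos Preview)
Your proposal is correct and follows essentially the same route as the paper: local existence on $S_d^{++}$ via Picard--Lindel\"of, a lower barrier from the continuous Riccati solution of Proposition~\ref{christaeberhardriccati} through Volkmann's comparison Theorem~\ref{thm:volkmann}, and the Gronwall bound from Lemma~\ref{lem:u2+1} to preclude blow-up. The only cosmetic difference is that the paper applies Theorem~\ref{thm:volkmann} with the jump-free $\tilde\theta$ rather than the full $\theta$; in either formulation the ``delicate point'' you flag reduces to the pointwise inequality $\theta(t,u)\succeq\tilde\theta(t,u)$ for $u\in S_d^+$, which follows directly from the sign conditions in \textrm{(A3$^+$)} and \textrm{(A4$^+$)} and is no stronger than what Lemma~\ref{lem:qminc} already establishes.
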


\begin{proof}
Since $u \mapsto \theta(t,u)$ is locally Lipschitz on $S_d^{++}$ and $t \mapsto \theta(t,u)$ is continuous, standard ODE theory gives that there exists a unique local $S_d^{++}$-valued solution $\Gamma(t,u)$ of \eqref{eqn:matrixodeforward} for $t \in [0,t_+(u))$ with
\begin{align*}
  t_+(u)= \liminf_{n \to \infty} \left\{ t \geq 0 \ : \ \parallel \Gamma(t,u) \parallel \geq n \mbox{ or } \Gamma(t,u) \in \partial S_d^+ \right\} \wedge T.
\end{align*}
Wanting a global solution we thus need to show that $t_+(u)=T$.
Since $\theta(t, \cdot)$ may fail to be Lipschitz continuous at $\partial S_d^+$, we look at $\theta$ without its jump terms and define 
\begin{align*}
  \tilde \theta(t,u) &=
4 u \Sigma^{\top} c_{zz}(t) \Sigma u + u \mathscr L^{\top}(t) + \mathscr L(t) u + B^*(u) 
+ \mathscr C(t), 
\end{align*}
for $(t,u) \in [0,T] \times S_d$.
By assumption (A2$^+$) and since the map $t \mapsto (c_{zz}(t), \mathscr L(t), \mathscr C(t))$ is real analytic we may apply Proposition \ref{christaeberhardriccati}.
Hence there exists a unique $S_d^{++}$-valued solution $\tilde \Gamma$ of
\begin{align*}
  \frac{\partial \tilde \Gamma(t,u)}{\partial t} = \tilde \theta (t,\tilde \Gamma(t,u)), \quad \tilde \theta(0,u)=u,
\end{align*}
for all $t \in [0,T]$.
By means of (A3$^+$) and (A4$^+$) we have that for all $(t,u) \in [0,T] \times S_d^+$
\begin{align*}
&  \theta(t,u) - \tilde \theta (t,u) 
\\
&=  \int_{S_d^+ \setminus \{0\}} \frac{g_M(t,\operatorname{Tr}(u\xi)) + \operatorname{Tr}(u(\xi-\chi(\xi)))}{\parallel \xi \parallel^2 \wedge 1} \mu(d\xi) 
\\
& \quad
+ \int_{S_d^+ \setminus \{0\}} \left(
2u \Sigma^T g_{z \sqrt{x}}(\operatorname{Tr}(t,\operatorname{Tr}(u\xi))) 
+ g_x(t,\operatorname{Tr}(u\xi)) 
+ g_y(t,\operatorname{Tr}(u\xi))
\right) m(d\xi)
\\ \nonumber
& \quad 
+ \int_{S_d^+ \setminus \{0\}} \left(
\sigma^{\top}(t) a g_{\hat z \hat z} (t, \operatorname{Tr}(u\xi)) a^{\top} \sigma(t)
+ 2 \sigma^{\top}(t) a g_{\hat z  z} (t, \operatorname{Tr}(u\xi)) \Sigma u
\right.
\\ \nonumber
& \hspace{2.5cm}  \left.
+ \sigma^{\top}(t) a g_{\hat z \sqrt x} (t, \operatorname{Tr}(u\xi))
\right) m(d\xi)
\\
& \succeq 0,
\end{align*}
and thus for all $u \in S_d^{++}$ and $t \in [0,t_+(u))$ we obtain
\begin{align*}
\frac{\partial \Gamma (t,u)}{\partial t} - \tilde \theta (t, \Gamma (t,u)) 
\succeq
\frac{\partial \tilde \Gamma (t,u)}{\partial t} - \tilde \theta (t,\tilde \Gamma (t,u))  .
\end{align*}
Theorem \ref{thm:volkmann} then yields for $t \in [0,t_+(u))$
\begin{align*}
  \Gamma(t,u) \succeq \tilde \Gamma(t,u) \in S_d^{++},
\end{align*}
and hence
$t_+(u)=\liminf_{n \to \infty} \{t \geq 0 \ : \ \parallel \Gamma(t,u) \parallel \geq n \}\wedge T$.
By assumptions (A5$^+$) and (A6$^+$) we have from Lemma \ref{lem:u2+1} that for all $u \in S_d^{++}$ and $t \in [0,t_+(u))$
\begin{align*}
  \partial_t \parallel \Gamma (t,u) \parallel^2 &=
2 \operatorname{Tr}\left( \Gamma (t,u) \partial_t  \Gamma (t,u)\right) \leq  K(t) \left( \parallel \Gamma (t,u) \parallel^2 +1 \right),
\end{align*}
for a continuous positive-valued function $K$.
Then Gronwall's inequality applied to $\parallel \Gamma (t,u) \parallel^2+1$ gives
\begin{align*}
\parallel \Gamma (t,u) \parallel^2 & \leq 
e^{\int_0^t K(s)ds} \left( \parallel u \parallel^2 + 1 \right) \mbox{ for } t < t_+(u),
\end{align*}
and thus $t_+(u)=T$ for $u \in S_d^{++}$.
\end{proof}

\begin{cor}
\label{cor:sd-mitsprung}
Let the map $t \mapsto (c_{zz}(t),\mathscr L(t),\mathscr C(t))$ be real analytic and suppose {\rm(A2$^-$)}, {\rm(A3$^-$)}, {\rm(A4$^-$)}, {\rm(A5$^-$)}, {\rm(A6$^-$)} and {\rm (A7)} hold.
Then the ODE \eqref{eqn:matrixode} has a unique solution $\Gamma(\cdot,u) \in S_d^{--}$ for every terminal value $u \in S_d^{--}$.
\end{cor}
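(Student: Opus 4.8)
The plan is to reduce this statement to Theorem~\ref{prop:existencematrixode} by the same reflection trick that turns Proposition~\ref{christaeberhardriccati} into Corollary~\ref{cor:christaeberhardriccati}. Fix $u \in S_d^{--}$, so that $-u \in S_d^{++}$, and set $\tilde\Gamma(t,-u) := -\Gamma(t,u)$ and $\tilde\theta(t,\tilde u) := -\theta(t,-\tilde u)$. Then $\Gamma(\cdot,u):[0,T] \to S_d^{--}$ solves \eqref{eqn:matrixode} if and only if $\tilde\Gamma(\cdot,-u):[0,T] \to S_d^{++}$ solves
\begin{align*}
  -\frac{\partial \tilde\Gamma(t,-u)}{\partial t} = \tilde\theta(t,\tilde\Gamma(t,-u)), \qquad \tilde\Gamma(T,-u) = -u .
\end{align*}
Since $\Gamma \leftrightarrow -\Gamma$ is a bijection between the solution sets, existence and uniqueness for one problem follow from those for the other, so it suffices to show that the reflected problem is covered by Theorem~\ref{prop:existencematrixode}.

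To that end one rewrites $\tilde\theta$ in the canonical form \eqref{eqn:matrixodetheta}. Using linearity of $B^*$ and of $\tilde u \mapsto \mathscr L(t)\tilde u + \tilde u \mathscr L^\top(t)$, together with $\operatorname{Tr}((-\tilde u)\xi) = -\operatorname{Tr}(\tilde u \xi)$, one reads off that $\tilde\theta$ is again of the form \eqref{eqn:matrixodetheta}, now with coefficient data $(\tilde c_{zz},\tilde{\mathscr L},\tilde{\mathscr C}) = (-c_{zz},\mathscr L,-\mathscr C)$ and with the jump coefficient functions obtained from $g_M, g_x, g_{z\sqrt x}, g_y, g_{\hat z\hat z}, g_{\hat z z}, g_{\hat z\sqrt x}$ by precomposing with $r \mapsto -r$ (restricted to $r \geq 0$) and additionally negating those coefficient functions whose corresponding term in $\theta$ does not already carry a factor of the spatial variable (the terms that do carry such a factor pick up their own sign under $\tilde u \mapsto -\tilde u$, so there the net sign is trivial). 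One then checks termwise that the negative-definite hypotheses for the original data become the positive-definite hypotheses for the reflected data: real analyticity of $t \mapsto (-c_{zz}(t),\mathscr L(t),-\mathscr C(t))$ follows from that of $(c_{zz},\mathscr L,\mathscr C)$; {\rm (A2$^-$)} becomes {\rm (A2$^+$)} because $c_{zz} \in S_d^+ \Leftrightarrow -c_{zz} \in S_d^-$ and $\mathscr C \in S_d^- \Leftrightarrow -\mathscr C \in S_d^+$; {\rm (A3$^-$)} and {\rm (A4$^-$)} become {\rm (A3$^+$)} and {\rm (A4$^+$)}, e.g.\ ``non-decreasing on $\R_-$ with values in $\R_-$'' turns into ``non-decreasing on $\R_+$ with values in $\R_+$'' after $r \mapsto -r$ and negation, and ``non-increasing on $\R_-$'' turns into ``non-decreasing on $\R_+$''; {\rm (A5$^-$)} and {\rm (A6$^-$)} become {\rm (A5$^+$)} and {\rm (A6$^+$)}, since the linear-growth bounds on $-g_M,-g_x,-g_{\hat z\hat z},-g_{\hat z\sqrt x}$ over $\R_-$ turn into linear-growth bounds on the reflected functions over $\R_+$ while the upper bounds on $g_{z\sqrt x}+g_y$ and on $g_{\hat z z}$ survive reflection; and {\rm (A7)} is preserved because $r \mapsto -r$ and negation are locally Lipschitz.

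Granting this, Theorem~\ref{prop:existencematrixode} applied to the reflected problem with terminal value $-u \in S_d^{++}$ produces a unique global solution $\tilde\Gamma(\cdot,-u) \in S_d^{++}$ on $[0,T]$, and then $\Gamma(\cdot,u) := -\tilde\Gamma(\cdot,-u) \in S_d^{--}$ is the unique solution of \eqref{eqn:matrixode}, which is the assertion. I expect the only delicate point to be the termwise translation of the assumptions in the second paragraph: one must keep track of which terms of $\theta$ are linear in the spatial variable and which are constant in it, because $\tilde u \mapsto -\tilde u$ acts differently on them and correspondingly flips the monotonicity and positivity conventions for the $M_d$-valued coefficient functions. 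Everything of analytic substance --- the comparison with the jump-free Riccati flow via Theorem~\ref{thm:volkmann} and the Gronwall a priori bound from Lemma~\ref{lem:u2+1} --- is already contained in Theorem~\ref{prop:existencematrixode}.
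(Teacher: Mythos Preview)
Your proposal is correct and follows essentially the same approach as the paper: reduce to Theorem~\ref{prop:existencematrixode} via the reflection $\tilde\Gamma(t,-u)=-\Gamma(t,u)$, $\tilde\theta(t,\tilde u)=-\theta(t,-\tilde u)$, and then observe that the reflected problem satisfies the $(+)$-assumptions. If anything, your termwise verification that {\rm(A2$^-$)}--{\rm(A6$^-$)} become {\rm(A2$^+$)}--{\rm(A6$^+$)} is more explicit than the paper's proof, which simply writes out $\tilde\theta$ and invokes Theorem~\ref{prop:existencematrixode} directly.
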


\begin{proof}
Fix $u \in S_d^{--}$. 
As we have already seen in the proof of Corollary \ref{christaeberhardriccati}, finding a solution $\Gamma(\cdot, u)$ to the ODE \eqref{eqn:matrixode} is equivalent to solving
\begin{align}
\label{eqn:odenegdef2}
  -\frac{\partial \tilde \Gamma(t,-u)}{\partial t} & = \tilde \theta (t,\tilde \Gamma(t,-u)), \quad \tilde \Gamma(T,-u)=-u,
\end{align}
where $\tilde \Gamma (t,-u)= -\Gamma(t,u)$ and
\begin{align*}
  \tilde \theta(t,\tilde u) 
& = -\theta(t,- \tilde u) 
\\
&=
- 4 \tilde u \Sigma^{\top} c_{zz}(t) \Sigma \tilde u 
+ \mathscr L(t) \tilde u 
+ \tilde u \mathscr L^{\top}(t) 
+ B^*(\tilde u)
- \mathscr C(t)
\\ \nonumber
& \quad 
+ \int_{S_d^+ \setminus \{0\}} 
\frac{\operatorname{Tr}\left(\tilde u(\xi-\chi(\xi))\right) - g_M(t,- \operatorname{Tr}(\tilde u\xi)) }{\parallel \xi \parallel^2 \wedge 1} \mu(d\xi)
\\ \nonumber
& \quad 
+ \int_{S_d^+ \setminus \{0\}} \Bigl(
g_{z \sqrt{x}}^{\top}(t,-\operatorname{Tr}(\tilde u\xi)) \Sigma u 
- g_x(t,- \operatorname{Tr}(\tilde u\xi)) 
 \\ \nonumber
 & \hspace{2.5cm}
+  \sigma^{\top}(t) a g_{\hat z  z} (t, - \operatorname{Tr}(\tilde u\xi)) \Sigma \tilde u
- \sigma^{\top}(t) a g_{\hat z \sqrt x} (t,- \operatorname{Tr}(\tilde u\xi))
\Bigr) m(d\xi)
\end{align*}
for $(t,\tilde u) \in [0,T] \times S_d^+$. By Theorem \ref{prop:existencematrixode} the ODE \eqref{eqn:odenegdef2} has a unique solution $\tilde \Gamma(\cdot, -u) \in S_d^+$ and hence there exists a unique solution $\Gamma(\cdot ,u)= - \tilde \Gamma (\cdot,-u) \in S_d^-$.
\end{proof}

\section{Application in multivariate affine stochastic volatility models}
\label{sec:stochvol}

In this chapter we apply the results of the previous chapter to the classical problem of utility maximization in a multivariate stochastic volatility setting.
Stochastic volatility models are an extension of the Black-Scholes model, where the previously constant assumed volatility is now modeled as a stochastic process.
The key feature of affine stochastic volatility models is that their Fourier-Laplace transform has an exponentially affine form.
For a multivariate model consider the $d$-dimensional logarithmic price process $N$ whose stochastic volatility is given by an affine process $R$ on $S_d^+$, then the following formula holds
\begin{align*}
  \E \l[ e^{\operatorname{Tr}(uR_t) + v^\top N_t}\r] &= e^{\operatorname{Tr}(\Psi(t,u,v)R_0)+v^\top N_0 + \Phi(t,u,v)},
\end{align*}
for suitable arguments $t \in [0,T]$, $u \in S_d +$i$S_d$ and $v \in \mathbb C^d$. 
The functions $\Phi$ and $\Psi$ solve a system of generalized Riccati ODEs which are specified by the model parameters.
This formula is the main reason for the analytic tractability of affine stochastic volatility models.
In the multivariate stochastic volatility models mainly used in the literature, the dynamics of $R$ follow
\begin{align*} 
  dR_t &= (b+ \hat B R_t + R_t \hat B^\top) dt + \sqrt{R_t}dW_t \Sigma + \Sigma^\top dW^\top \sqrt{R_t} + dJ_t,
\\
R_0&=r \in S_d^+,
\end{align*}
where $W$ is a matrix-valued Brownian motion possibly correlated with the Brownian motion driving $N$.
Moreover $b$ is a suitably chosen matrix in $S_d^+$, $\Sigma$, $\hat B$ are some invertible matrices and $J$ is a pure jump process with a compensator that is affine in $R$.
Without jumps this process is a Wishart-process (see also \eqref{eqn:wishart} and thereafter).
They were introduced by \cite{b91} and have been applied to many different fields such as term structure modeling and derivative pricing in \cite{gs03,gs04,dgt07,dgt08}.
The authors of \cite{bs07,bs11} consider multivariate stochastic volatility models for a class of matrix-valued Ornstein-Uhlenbeck processes driven by a L\`evy process of finite variation.
Closer to our subject is the work of \cite{dig09}.
There the authors investigate the power utility maximization problem in a multivariate Heston model where the covariation process follows a Wishart process.
They obtain the optimal portfolio and utility via a duality approach.

We start with the definition of multivariate affine stochastic volatility models.
Thereafter we carry out the martingale property of the stochastic exponential of some process which will allow us to prove optimality in the utility maximization problem.
More precisely we want to maximize expected utility of terminal wealth.
The wealth process $X^{x, \pi}$ is composed of the initial capital $x \in \R$ and gains from trading with strategy $\pi$ in the market.
We want to solve the problem in presence of random revenues $F$ which are paid at terminal time $T$, i.e.
\begin{align*}
V(x)= \sup_{\pi \in \mathcal A} \E \left[ U( X_T^{x,\pi} + F ) \right], \quad x \in \R,
\end{align*}
where $U$ is an exponential utility function.
Once a notion of admissibility is fixed we call any $\pi \in \mathcal A$ an admissible (trading) strategy.
Our aim is to explicitly describe the value function $V$ and the corresponding optimal strategy $\pi^{opt}$.
Similarly we examine the problem
\begin{align*}
V(x)= \sup_{\pi \in \mathcal A} \E \left[ U( X_T^{x,\pi} e^F ) \right], \quad x \in \R,
\end{align*}
where $U$ now is a power utility function.
It is known that the logarithmic utility maximization problem with $F=0$ can be solved explicitly for almost all semimartingale models, see e.g. \cite{gk03} and the references therein.
This is why we do not consider logarithmic utility in this work.


\subsection{Definition}


Multivariate affine stochastic volatility models are characterized via the joint Fourier-Laplace transform of the stochastic logarithm of the price process and the covariation process.
We suppose that the discounted $d$-dimensional asset price process $H$ is modeled as a stochastic exponential
\begin{align*}
  H_t= H_0 \mathcal E(N)_t, \quad t \in [0,T],
\end{align*}
where $N$ is the discounted $d$-dimensional logarithmic price process with $N_0=n \in \R^d$.
Let $R$ denote the stochastic covariation process with states in $S_d^+$ and starting in $R_0=r \in S_d^+$.

\begin{defn}[\cite{christa} Definition 5.3.5.]
 We call a stochastic process $(R,N)=(R_t,N_t)_{t \in [0,T]}$ with values in $S_d^+ \times \R^d$ a multivariate affine stochastic volatility model, if the following conditions are satisfied.
\begin{itemize}
  \item[(i)] The pair of processes $(R,N)$ is a stochastically continuous Markov process.
  \item[(ii)] Under the risk neutral measure $\mathbb Q$, the Fourier-Laplace transform of $(R, N)$ is  exponentially affine in the initial states $(r,n)$, i.e. there exist functions $(t,u,v) \mapsto \Psi(t,u,v)$ and $(t,u,v) \mapsto \Phi(t,u,v)$ such that
\begin{align}
\label{eqn:expaffinstochvolmod}
  \E^{\mathbb Q} \left[ e^{\operatorname{Tr} \left( u R_t\right) + v^{\top}  N_t}\right]
&=
\exp \left(  \operatorname{Tr}\left( \Psi(t,u,v) r \right) + v^{\top} n + \Phi(t,u,v) \right),
\end{align}
for all $(t,u,v) \in \mathcal Q$, where
\begin{align*}
  \mathcal Q = \left\{ (t,u,v) \in [0,T] \times S_d+\mathrm{i}S_d \times  \mathbb C^d \ : \ 
\E^{\mathbb Q} \left[ e^{\operatorname{Tr} \left( u R_t\right) + v^{\top}   N_t}\right]
< \infty
\right\}.
\end{align*}
	\item[(iii)] The asset price process $H$ is a martingale under the risk-neutral probability measure $\mathbb Q$.
\end{itemize}
\end{defn}

\subsection{The Martingale property}
\label{sec:martprop}

In the following sections it will play an important role under which conditions the stochastic exponential of a process involving an affine process becomes a true martingale.
This problem has applications in fields including absolute continuity of distributions of stochastic processes (see \cite{cfy05}) and the verification of optimality in utility maximization as we use it here.
On the state space $\R^n_+ \times \R^m$ and in a time-homogeneous setting the problem has been addressed already in \cite{km10,mms11}, which has then been extended to the state space $S_d^+ \times \R^d$ in \cite{christa}.

Suppose $R$ is an affine process with admissible parameter set $(\alpha, b,\beta^{ij},m,0)$ associated with truncation function $\chi^R$.
Let for all $s \in [0,T]$
\begin{align}
\label{eqn:ehoch}
 \int_{ \{|\operatorname{Tr}(\sigma_{\mu}(s) \xi)|>1\}} e^{\operatorname{Tr}(\sigma_{\mu}(s) \xi )} m(d\xi) 
& < \infty,
\end{align}
and consider the process 
\begin{align}
\label{eqn:P}
  P_t 
&= 
\int_0^t \sigma_Q^{\top}(s) \sqrt{R_s} dQ_s 
+ \int_0^t \operatorname{Tr} \left( \sigma_W(s) \sqrt{R_s} dW_s \right) 
+ \int_0^t \operatorname{Tr} \left( \sigma_{\hat Q}(s) \sqrt{R_s} d\hat Q_s \right) 
\\ \nonumber
& \quad
+ \int_0^t \int_{S_d^+ \setminus \{0\}} \left( e^{\operatorname{Tr}(\sigma_{\mu}(s) \xi)} -1 \right)d(\mu^R(ds,d\xi) - m(d\xi)ds),
t \in [0,T],
\end{align}
where $\sigma_Q:[0,T] \to \R^d$ and $\sigma_W, \sigma_{\hat Q},\sigma_{\mu}:[0,T] \to M^d$ are continuous functions of time.
The $d$-dimensional Brownian motion $Q$ is correlated to the matrix Brownian motion $W$ by
\begin{align*}
  dQ_t &=
dW_t \rho + \sqrt{1-\rho^{\top} \rho} dD_t.
\end{align*}
Here $D$ is a $d$-dimensional Brownian motion independent of $W$ and $\rho$ a $d$-dimensional vector with entries $\rho_i \in [-1,1]$, $i=1, \ldots,d$, satisfying $\rho^{\top} \rho \leq 1$.
The process $\hat Q$ is another independent $d \times d$-matrix Brownian motion.
Finally $\mu^R$ denotes the random measure associated to the jumps of $R$.
The goal of this section is to show that the stochastic exponential of $P$ is a martingale which will help us proving optimality in the utility maximization problems considered in the following sections.

\begin{thm}
\label{thm:martingale}
Assume \eqref{eqn:ehoch}, then the process $\mathcal E(P)$ is a martingale.
\end{thm}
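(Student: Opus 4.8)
The plan is to recognise $\mathcal E(P)$ as the candidate density process of an affine change of measure and to reduce its martingale property to the non-explosion (conservativeness) of the affine process obtained under the candidate measure, in the spirit of \cite{km10,mms11} and their extension to the $S_d^+\times\R^d$ setting in \cite{christa}. First I would record that each integrand in \eqref{eqn:P} is locally bounded (the coefficient functions are continuous on $[0,T]$ and $R$ is $S_d^+$-valued) and that the compensated jump integral is a genuine local martingale, using \eqref{eqn:integrabilitym} near the origin and \eqref{eqn:ehoch} for the large jumps; hence $P$ is a local martingale and $\mathcal E(P)$ a nonnegative local martingale with $\mathcal E(P)_0=1$. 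Being nonnegative it is a supermartingale, so it is a true martingale if and only if $\E^{\P_x}[\mathcal E(P)_T]=1$ for every $x\in S_d^+$.

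To prove the latter I would carry out the affine measure change. Along a localising sequence $(\sigma_k)$ for $\mathcal E(P)$, the stopped processes $\mathcal E(P)^{\sigma_k}$ are true martingales and define probabilities $\mathbb Q_k^x$ on $\mathcal F_{\sigma_k}$ by $d\mathbb Q_k^x/d\P_x=\mathcal E(P)_{\sigma_k}$. By Girsanov's theorem for semimartingales (\cite{js87}, Section III.3), under $\mathbb Q_k^x$ the Brownian motions $Q,W,\hat Q$ acquire drifts proportional to $\sqrt{R}$ times $\sigma_Q,\sigma_W,\sigma_{\hat Q}$, and the compensator $m(d\xi)\,ds$ of the jumps of $R$ is replaced by $e^{\operatorname{Tr}(\sigma_{\mu}(s)\xi)}m(d\xi)\,ds$. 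One then reads off that, under the candidate measure, $R$ is a time-inhomogeneous affine process on $S_d^+$ with vanishing killing coefficients and parameters $(\alpha,\tilde b(\cdot),\tilde\beta^{ij}(\cdot),\tilde m(\cdot,\cdot),0)$, where $\tilde m(s,d\xi)=e^{\operatorname{Tr}(\sigma_{\mu}(s)\xi)}m(d\xi)$, $\tilde b(s)=b+\int_{S_d^+\setminus\{0\}}\chi(\xi)\bigl(e^{\operatorname{Tr}(\sigma_{\mu}(s)\xi)}-1\bigr)m(d\xi)$, and $\tilde\beta^{ij}$ absorbs the extra linear drift created by the covariation of $\int_0^{\cdot}\operatorname{Tr}(\sigma_W(s)\sqrt{R_s}\,dW_s)$ with $R$. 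The role of \eqref{eqn:ehoch} is precisely to make $\tilde b(s)$ finite and $\tilde m(s,\cdot)$ an admissible jump measure with $\int_{\|\xi\|>1}\|\xi\|\,\tilde m(s,d\xi)<\infty$, so that the transformed parameter set is admissible in the sense of Definition \ref{defn:admparset} --- in particular $\tilde b(s)\succeq(d-1)\alpha$ and the linear-drift inequality \eqref{eqn:lindrift} hold --- uniformly in $s\in[0,T]$; hence the transformed process admits a conservative version that does not explode on $[0,T]$.

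Finally I would combine the two facts. Write $\mathbb Q^x$ for the law (started at $x$) of the transformed affine process, set $\tau_n=\inf\{t:\|R_t\|\ge n\}\wedge T$, and let $\mathbb Q_n^x$ be the probability on $\mathcal F_{\tau_n}$ with $d\mathbb Q_n^x/d\P_x=\mathcal E(P)_{\tau_n}$ (this requires $\mathcal E(P)^{\tau_n}$ to be uniformly integrable, which again follows from \eqref{eqn:ehoch}). Optional stopping together with $\{\tau_n<T\}\in\mathcal F_{\tau_n}$ gives $1=\E^{\P_x}[\mathcal E(P)_{\tau_n}]=\E^{\P_x}\bigl[\mathcal E(P)_T\mathbf{1}_{\{\tau_n=T\}}\bigr]+\mathbb Q_n^x(\tau_n<T)$. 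Since $\tau_n\uparrow T$ $\P_x$-a.s. (conservativeness of $R$ under $\P_x$), monotone convergence sends the first term to $\E^{\P_x}[\mathcal E(P)_T]$; and $\mathbb Q_n^x(\tau_n<T)\le\mathbb Q^x(\sup_{t\le T}\|R_t\|\ge n)\to 0$ because the transformed process does not explode on $[0,T]$. Hence $\E^{\P_x}[\mathcal E(P)_T]=1$ for all $x$, i.e. $\mathcal E(P)$ is a martingale. (Equivalently, once the transformed parameter set has been identified, the conclusion is immediate from the general affine-martingale criterion of \cite{christa}, built on \cite{km10,mms11}.)

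The step I expect to be the main obstacle is verifying that the Girsanov-transformed parameter set is genuinely admissible --- above all the drift conditions $\tilde b(s)\succeq(d-1)\alpha$ and \eqref{eqn:lindrift}, together with their stability under the time-dependent coefficients --- since this is exactly where \eqref{eqn:ehoch} must be brought to bear and where the interplay between the truncation function $\chi$ and the exponentially tilted jump measure has to be handled with care. Everything else (the local martingale property, Girsanov's formulas, the supermartingale observation, and the limiting argument) is routine once that is in place.
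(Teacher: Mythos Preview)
Your approach is valid but takes a genuinely different route from the paper. The paper first establishes (in Lemma \ref{lem:laplaceexpaff}) that the pair $(R,\hat P)$ with $\hat P=\ln\mathcal E(P)$ has an exponentially affine conditional Laplace transform governed by Riccati equations $\partial_s\Psi=-\mathscr R(s,\Psi,v)$, $\partial_s\Phi=-\mathscr F(s,\Psi,v)$. After observing via \eqref{eqn:ehoch} and \cite{k04} that $e^{\hat P}$ is a positive $\sigma$-martingale, hence a supermartingale, the paper simply evaluates $\mathscr R(s,0,1)=0$ and $\mathscr F(s,0,1)=0$. Thus $\Psi(s,t,0,1)\equiv 0$, $\Phi(s,t,0,1)\equiv 0$, and the affine formula gives $\E[e^{\hat P_t}\mid(R_s,\hat P_s)]=e^{\hat P_s}$ directly. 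No Girsanov transform, no transformed parameter set, no conservativeness argument is needed.

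Your measure-change route is the classical Cheridito--Filipovi\'c--Yor / Kallsen--Muhle-Karbe line and is perfectly sound here; in fact the admissibility checks you flag as delicate are easier than you fear. Since $R$ has $\mu=0$, the intrinsic constant drift parameter $b$ is unchanged by the exponential tilt of $m$ (the extra jump contribution is absorbed in $\int\chi(\xi)\tilde m(s,d\xi)$), so $\tilde b(s)=b\succeq(d-1)\alpha$ is automatic; and the Brownian Girsanov drift for $R$ has the form $xC(s)+C(s)^\top x$, which satisfies \eqref{eqn:lindrift} trivially because $xu=0$ whenever $\operatorname{Tr}(xu)=0$ for $x,u\in S_d^+$. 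The genuine extra work in your approach is handling the \emph{time-inhomogeneity} of the transformed parameters and the consistency of the locally defined measures $\mathbb Q_n^x$ with a global $\mathbb Q^x$, which you correctly defer to \cite{christa}. By contrast, the paper's argument sidesteps all of this: once Lemma \ref{lem:laplaceexpaff} is in hand, the martingale property is a two-line observation that $(0,1)$ is a fixed point of the Riccati system. What your route buys is conceptual transparency about \emph{why} the martingale property holds (non-explosion under the tilted dynamics); what the paper's route buys is brevity and self-containment, since the same Riccati machinery is reused throughout Section \ref{sec:stochvol}.
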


It will be crucial that the conditional Fourier-Laplace transform of $(R,\hat P)$ with
\begin{align*} 
  \hat P&=\ln \left( \mathcal E (P)\right),
\end{align*}
is exponentially affine.
This is stated in the following Lemma.

\begin{lem}
\label{lem:laplaceexpaff}
The conditional Fourier-Laplace transform of $(R, \hat P)$ has an exponentially affine form.
More precisely, there exist functions $(s,t,u,v) \mapsto \Psi(s,t,u,v)$ and $(s,t,u,v) \mapsto \Phi(s,t,u,v)$ such that 
\begin{align}
\label{eqn:expaffinstochvolmod1}
  \E \left[ e^{\operatorname{Tr} \left( u R_t\right) + v \hat  P_t} \Big| (R_s, \hat P_s)\right]
&=
\exp \left(  \operatorname{Tr}\left( \Psi(s,t,u,v) R_s \right) + v \hat P_s + \Phi(s,t,u,v) \right),
\end{align}
for all $(s,t,u,v) \in \mathcal I$, where
\begin{align*}
  \mathcal I 
&= \Bigl\{ (s,t,u,v) \in [0,T]\times [0,T] \times S_d+\mathrm{i}S_d \times  \mathbb C \ : \ s \leq t, 
\E\left[ e^{\operatorname{Tr} \left( u R_t\right) + v \hat P_t} \Big| (R_s, \hat P_s)\right]
< \infty
\Bigr\}.
\end{align*}
The functions $\Phi$ and $\Psi$ have the following form
\begin{align}
\label{eqn:odephi}
  -\frac{\partial \Phi(s,t,u,v)}{\partial s} & = \mathscr F(s,\Psi(s,t,u,v),v), \quad \Phi(t,t,u,v)=0,
\\
\label{eqn:odepsi}
-\frac{\partial \Psi(s,t,u,v)}{\partial s} & = \mathscr R(s,\Psi(s,t,u,v),v), \quad \Psi(t,t,u,v)=u,
\end{align}
where
\begin{align*}
  \mathscr F(s,u,v)
&= \operatorname{Tr}(bu)
+ \int_{S_d^+ \setminus \{0\}} \left( e^{\operatorname{Tr}(u\xi)-v \operatorname{Tr}(\sigma_{\mu}(s) \xi)} - v e^{\operatorname{Tr}(\sigma_{\mu}(s) \xi)} + v -1 -\operatorname{Tr}(u \xi)
\right) m(d\xi),
\\
\mathscr R(s,u,v)
&=
2 u \alpha u 
+ B^*(u )
+ \frac12 v(v-1)\big(2 \sigma_Q(s)\rho^{\top}\sigma_W(s)+\sigma_W^{\top}(s)\sigma_W(s)+\sigma_{\hat Q}^{\top}(s)\sigma_{\hat Q}(s)
\\
& \quad
+\sigma_Q(s) \sigma_Q^{\top}(s)\big)
+ vu(\sigma_Q(s) \rho^{\top}+\sigma_W^{\top}(s))\Sigma
+v \Sigma^{\top} (\sigma_W(s) + \rho \sigma_Q^{\top}(s))u
,
\end{align*}
for all $(s,t,u,v) \in \mathcal I $.
\end{lem}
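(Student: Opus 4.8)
The plan is to recognise $(R,\hat P)$ as a time-inhomogeneous affine Markov process on $S_d^+\times\R$ and to read off the Riccati system \eqref{eqn:odephi}--\eqref{eqn:odepsi} from its generator. First I would make the dynamics of $\hat P=\ln\mathcal E(P)$ explicit. By the Dol\'eans--Dade formula $\hat P_t=P^c_t-\tfrac12\langle P^c\rangle_t+\sum_{s\leq t}(\ln(1+\Delta P_s)-\Delta P_s)$, where $P^c$ is the continuous local martingale part of $P$ in \eqref{eqn:P}; since the jumps of $P$ are $\Delta P_s=e^{\operatorname{Tr}(\sigma_\mu(s)\Delta R_s)}-1$, we have $\ln(1+\Delta P_s)=\operatorname{Tr}(\sigma_\mu(s)\Delta R_s)$, and rearranging the jump sum against $\mu^R$ and its $m(d\xi)\,ds$ compensator one obtains
\begin{align*}
  \hat P_t &= P^c_t - \tfrac12\langle P^c\rangle_t + \int_0^t\int_{S_d^+\setminus\{0\}}\operatorname{Tr}(\sigma_\mu(s)\xi)\bigl(\mu^R(ds,d\xi)-m(d\xi)\,ds\bigr) \\
  &\quad + \int_0^t\int_{S_d^+\setminus\{0\}}\bigl(\operatorname{Tr}(\sigma_\mu(s)\xi)-e^{\operatorname{Tr}(\sigma_\mu(s)\xi)}+1\bigr)m(d\xi)\,ds.
\end{align*}
Hence $\hat P$ is driven by the same Brownian motions $Q,W,\hat Q$ and the same jump measure $\mu^R$ as $R$, with all coefficients affine in $R_s$ (the diffusion coefficients linear through the $\sqrt{R_s}$ factor, exactly as in \eqref{representationofx}), so that $(R,\hat P)$ is Markov with respect to $(\mathcal F_t)$. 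Together with the stochastic continuity of $R$ this places $(R,\hat P)$ in the affine class on $S_d^+\times\R$, which is the setting treated in \cite{christa} (extending \cite{km10,mms11} to $S_d^+\times\R^d$); from there the exponential-affine form \eqref{eqn:expaffinstochvolmod1} and the existence of an associated generalized Riccati system follow.

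It remains to identify the generators $\mathscr F$ and $\mathscr R$. Fixing $t$ and $(u,v)$ with $(s,t,u,v)\in\mathcal I$, I would consider the candidate process $M_s:=\exp\bigl(\operatorname{Tr}(\Psi(s,t,u,v)R_s)+v\hat P_s+\Phi(s,t,u,v)\bigr)$, $s\in[0,t]$, with $\Phi,\Psi$ solving \eqref{eqn:odephi}--\eqref{eqn:odepsi}, apply It\^o's formula using \eqref{representationofx} and the dynamics of $\hat P$ above, and collect the finite-variation terms. Setting the resulting drift to zero yields precisely the stated Riccati system: the $2u\alpha u$ term and $B^*(u)$ come from the diffusion and linear-drift characteristics of $R$ (the sign relative to Theorem \ref{thm:charaffineproc} reflecting the $+\operatorname{Tr}(uR_t)$ rather than $-\operatorname{Tr}(uR_t)$ in the transform), the jump integral in $\mathscr F$ encodes the interplay between the exponential structure of the jumps of $P$ and the compensator of $\mu^R$, the $\tfrac12 v(v-1)(\cdots)$ term arises from the quadratic variation of the continuous martingale part of $\operatorname{Tr}(\Psi R)+v\hat P$ combined with the $-\tfrac12\langle P^c\rangle$ drift in $\hat P$, and the cross terms $vu(\sigma_Q(s)\rho^{\top}+\sigma_W^{\top}(s))\Sigma+v\Sigma^{\top}(\sigma_W(s)+\rho\sigma_Q^{\top}(s))u$ come from the covariation of the $W$-driven parts of $R$ and of $\hat P$, where the $Q$--$W$ correlation $\rho$ enters. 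This shows $M$ is a local martingale.

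Finally I would upgrade $M$ to a true martingale, so that $\E[M_t\mid\mathcal F_s]=M_s$; since $M_t=e^{\operatorname{Tr}(uR_t)+v\hat P_t}$ and $(R,\hat P)$ is Markov, conditioning on $\mathcal F_s$ coincides with conditioning on $(R_s,\hat P_s)$, which is exactly \eqref{eqn:expaffinstochvolmod1}. The only genuine integrability obstruction is the large jumps of $P$, and this is what hypothesis \eqref{eqn:ehoch} controls: it secures the exponential moments needed to run the affine-transform argument of \cite{christa}. I expect this last step --- verifying that the exponential local martingale $M$ loses no mass and that the Riccati system is solvable up to time $t$ for all arguments in $\mathcal I$ --- together with the (minor) passage from the time-homogeneous results of \cite{christa} to the present time-dependent coefficients, to be the main obstacle; the It\^o computation identifying $\mathscr F$ and $\mathscr R$ is lengthy but routine.
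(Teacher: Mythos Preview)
Your approach is essentially the same as the paper's: derive the dynamics of $\hat P$ via the Dol\'eans--Dade formula (the paper obtains exactly your expression as its equation (\ref{eqn:hatp})), apply It\^o's formula to the candidate exponential-affine process, set the drift to zero, and read off $\mathscr F,\mathscr R$ by equating coefficients in $R_s$.

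The one point where the paper's route differs from yours is the final step you flag as ``the main obstacle''. You propose to upgrade the local martingale $M$ to a true martingale directly, invoking the moment condition \eqref{eqn:ehoch}. The paper avoids this altogether by reversing the logic: it \emph{starts} from the conditional expectation $h(s,R_s,\hat P_s)=\E[e^{\operatorname{Tr}(uR_t)+v\hat P_t}\mid(R_s,\hat P_s)]$, which is a martingale automatically by the tower property, applies It\^o to $h$ under the exponential-affine ansatz, and concludes that the drift must vanish, yielding \eqref{eqn:odephi}--\eqref{eqn:odepsi}. The remaining issue is then not a martingale verification but a Riccati existence question---does $\Psi$ stay finite on the relevant time interval?---and here the paper invokes \cite{sv10}~Theorem~3.7, which says precisely that the explosion time of the generalized Riccati system coincides with the set where the transform on the left of \eqref{eqn:expaffinstochvolmod1} is finite, i.e.\ with $\mathcal I$. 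This buys a cleaner argument than checking uniform integrability of $M$ by hand, and also explains why $\mathcal I$ is exactly the right domain; your citation of \cite{christa} would cover the time-homogeneous case, but the reference actually doing the work here (for time-dependent coefficients and the explosion/moment correspondence) is \cite{sv10}.
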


\begin{proof}
Since $\Delta P_t>-1$ for all $t \in [0,T]$ we have $\mathcal E(P)>0$ and hence the process $\hat P$ is well defined.
It\^o's formula 
gives
\begin{align}
\label{eqn:hatp} 
 \hat P_t 
&=
P_t - \frac12 \langle P, P \rangle_t + \sum_{s \leq t} \ln (1 + \Delta P_s) - \Delta P_s
\\ \nonumber
&=
P_t - \frac12 \langle P, P \rangle_t + \int_0^t \int_{S_d^+ \setminus \{0\}} \operatorname{Tr}(\sigma_{\mu}(s) \xi) (\mu^R(ds,d\xi)-m(d\xi)ds) 
\\ \nonumber
& \quad
+ \int_0^t \int_{S_d^+ \setminus \{0\}} \left( \operatorname{Tr}(\sigma_{\mu}(s) \xi) - e^{\operatorname{Tr}(\sigma_{\mu}(s) \xi)} + 1 \right)m(d\xi)ds.
\end{align}
Let us assume that there exist functions $\Psi:\mathcal I \to S_d+\mathrm{i}S_d$ and $\Phi: \mathcal I  \to \mathbb C$ such that the conditional Fourier-Laplace transform is of the form \eqref{eqn:expaffinstochvolmod1},
i.e. $\Psi(t,t,u,v)=u$ and $\Phi(t,t,u,v)=0$.
We consider $(s,t,u,v) \in \mathcal I$ from now on and denote
$$h(s,R_s , \hat P_s) = \exp\left( \operatorname{Tr}(\Psi(s,t,u,v) R_s) + v \hat P_s + \Phi(s,t,u,v)\right), \quad s\leq t.$$ 
Then we obtain with It\^o's formula for $s \leq t \in [0,T]$
\begin{align*}
&  dh(s,R_s,\hat P_s)
\\
&=
\frac{\partial h(s,R_s,\hat P_s)}{\partial s}  ds 
+ \sum_{i,j=1}^d \frac{\partial h(s,R_s, \hat P_s)}{\partial r_{ij}} dR_{ij,s} 
+ \frac{\partial h(s,R_s,\hat P_s)}{\partial  \hat p} d \hat P_s
\\
& \quad
+\frac{1}{2} \sum_{i,j,k,l=1}^d \frac{\partial^2 h(s,R_s, \hat P_s)}{\partial r_{ij} \partial r_{kl}} d \langle R_{ij},R_{kl} \rangle_s
+\frac{1}{2} \frac{\partial^2 h(s,R_s,\hat P_s)}{\partial \hat p \partial \hat p} d \langle \hat P, \hat P \rangle_s 
\\
& \quad
+ \sum_{i,j=1}^d \frac{\partial^2 h(s,R_s,\hat P_s)}{\partial r_{ij} \partial p} d \langle R_{ij},\hat P \rangle_s 
\\
& \quad 
+ \sum_{s \leq t} \biggl(
 h(s,R_s,\hat P_s) - h(s,R_{s-},\hat P_{s-}) 
- \sum_{i,j=1}^d \frac{\partial h(s,R_{s-},\hat P_{s-})}{\partial r_{ij}} \Delta R_{ij,s} 
- \frac{\partial h(s,R_{s-},\hat P_{s-})}{\partial \hat p} \Delta \hat P_s
\biggr).
\end{align*}
By the law of iterated expectations, for all $s \in [0,t]$ and $\bar s \in [s,t]$,
\begin{align*}
  h(s,R_s,\hat P_s) 
&=\E \left[ \E \l[ e^{\operatorname{Tr}(uR_t) +v \hat P_t} | (R_{\bar s}, \hat P_{\bar s}) \r] | (R_s,\hat P_s)\right] 
= \E \left[ h(\bar s , R_{\bar s}, \hat P_{\bar s}) | (R_s,\hat P_s)\right].
\end{align*}
This means that $h(\cdot,R,\hat P)$ is a martingale and hence the bounded variation part in the above equation has to be zero for all $s \leq t \in [0,T]$.
Straightforward computations then lead to
\begin{align}
\label{eqn:locmartingal}
0
&=
  h(s, R_s,\hat P_s) 
\Biggl(
\operatorname{Tr} \l( \frac{\partial \Psi(s,t,u,v)}{\partial s} R_s \r) 
  + \frac{\partial \Phi(s,t,u,v)}{\partial s}
+ 2\operatorname{Tr}(R_s \Psi(s,t,u,v)  \alpha \Psi(s,t,u,v))
\\ \nonumber
& \quad
+\operatorname{Tr}((b+B(R_s))\Psi(s,t,u,v))
+ \frac12 v(v-1) \operatorname{Tr}(R_s (2 \sigma_Q(s) \rho^{\top }\sigma_W(s) 
+\sigma_W^{\top}(s) \sigma_W(s)
\\ \nonumber
& \quad
+\sigma_{\hat Q}^{\top}(s) \sigma_{\hat Q}(s)
 + \sigma_Q(s) \sigma_Q^{\top}(s)))
+ 2 v \operatorname{Tr}( \Psi(s,t,u,v)(\sigma_Q(s) \rho^{\top} + \sigma_W(s))\Sigma)
\\ \nonumber
& \quad
+ \int_{S_d^+ \setminus\{0\}} \left( 
v\operatorname{Tr}(\sigma_{\mu}(s) \xi) 
- v e^{\operatorname{Tr}(\sigma_{\mu}(s)\xi)} 
+v 
+ e^{\operatorname{Tr}(\Psi(s,t,u,v)\xi) + v\operatorname{Tr}(\sigma_{\mu}(s) \xi)}-1
\right) m(d\xi) ds
\\ \nonumber
& \quad
+ \int_{S_d^+ \setminus\{0\}} 
\Bigl(
-\operatorname{Tr}(\Psi(s,t,u,v)\xi) -v\operatorname{Tr}(\sigma_{\mu}(s) \xi)
\Bigr) m(d\xi) ds
\Biggr).
\end{align}
Here we have used the representation \eqref{representationofx} of $R$, formula \eqref{eqn:hatp} and
\begin{align*}
d\langle R_{ij},R_{kl}\rangle_s
&=
A_{ijkl}(R_s),
\\
d\langle  \hat P, \hat P\rangle_s &=\operatorname{Tr} (R_s(2 \sigma_Q(s) \rho^{\top }\sigma_W(s) +\sigma_W^{\top}(s) \sigma_W(s) +\sigma_{\hat Q}^{\top}(s) \sigma_{\hat Q}(s) + \sigma_Q(s) \sigma_Q^{\top}(s))) ds,
\\
d \langle R, \hat P  \rangle_s &= 2 R_s ((\sigma_Q(s) \rho^{\top} + \sigma_W(s))\Sigma) ds, \quad s \in [0,T].
\end{align*}
We obtain the ODEs \eqref{eqn:odepsi} and \eqref{eqn:odephi} by equating coefficients.
There exists a unique local solution for equation \eqref{eqn:odepsi} since it is locally Lipschitz on $S_d+\mathrm{i}S_d$.
From \cite{sv10} Theorem 3.7 we know that the time before the ODE possibly explodes coincides with the expectation on the left hand side of \eqref{eqn:expaffinstochvolmod1} to exist.
This gives the result.
\end{proof}

In a time homogeneous setting \cite{christa} Theorem 5.3.4 gives necessary and sufficient conditions for the exponential of $e^{\tilde P}$ of a process $\tilde P \in \R^d$ to be a martingale.
We adapt the proof to our time-inhomogeneous process $\hat P$.

\begin{proof}[Proof of Theorem \ref{thm:martingale}]
We show that $e^{\hat P}$ is a martingale which gives the result due to $e^{\hat P}= \mathcal E(P)$.
Since \eqref{eqn:ehoch} gives
\begin{align*}
 \int_{ \{|\Delta \hat P_s|>1\}} e^{\Delta \hat P_s } m(d\xi) 
& = \int_{ \{|\operatorname{Tr}(\sigma_{\mu}(s) \xi)|>1\}} e^{\operatorname{Tr}(\sigma_{\mu}(s) \xi )} m(d\xi)  < \infty, 
\end{align*}
and $P$ is a local martingale, \cite{k04} Lemma 3.1 implies that $e^{\hat P}$ is a $\sigma$-martingale.
Due to $e^{\hat P}>0$ additionally we obtain by \cite{k04} Proposition 3.1 that $e^{\hat P}$ is a supermartingale.
Hence it is in particular integrable and thus $(s,t,0,1) \in \mathcal I$ for all $s \leq t \in [0,T]$.
From Lemma \ref{lem:laplaceexpaff} we have for all $s \leq t \in [0,T]$ 
\begin{align*}
  \E \left[ e^{\hat P_t} \Big| (R_s,\hat P_s)\right] 
&=
\exp \left( \operatorname{Tr}(\Psi(s,t,0,1)R_s) +\hat P_s + \Phi(s,t,0,1)\right).
\end{align*}
Thus, $\Psi(s,t,0,1)=0$ and $\Phi(s,t,0,1)=0$ for all $s \leq t \in [0,T]$ would ensure that $e^{\hat P}$ is a martingale.
Indeed Lemma \ref{lem:laplaceexpaff} implies
\begin{align*}
  \mathscr F(s,0,1)
&= \int_{S_d^+ \setminus \{0\}} \left( e^{-1 \operatorname{Tr}(\sigma_{\mu}(s) \xi)} - 1 e^{\operatorname{Tr}(\sigma_{\mu}(s) \xi)} +1-1 \right) m(d\xi) = 0,
\\
\mathscr R(s,0,1)
&=
 B^*(0 )
+ \frac12 1(1-1)(2 \sigma_Q(s)\rho^{\top}\sigma_W(s)+\sigma_W^{\top}(s)\sigma_W(s)+\sigma_{\hat Q}^{\top}(s)\sigma_{\hat Q}(s)+\sigma_Q(s) \sigma_Q^{\top}(s)) =0,
\end{align*}
and hence the solution of 
\begin{align*}
  -\frac{\partial \Phi(s,t,0,1)}{\partial s} & = \mathscr F(s,\Psi(s,t,0,1),1)=0, \quad \Phi(t,t,0,1)=0,
\\
-\frac{\partial \Psi(s,t,0,1)}{\partial s} & = \mathscr R(s,\Psi(s,t,0,1),1)=0, \quad \Psi(t,t,0,1)=0,
\end{align*}
is $\Psi(s,t,0,1)=0$ and $\Phi(s,t,0,1)=0$ for all $s \leq t \in [0,T]$.
\end{proof}


\subsection{Solution in a continuous multivariate affine stochastic volatility model}
\label{sec:heston}

In this section we introduce a continuous affine stochastic volatility model which is a natural multivariate extension of the Heston model.
We then formulate the utility maximization problem in this model and solve it for power and exponential utility.
This allows to describe the optimal strategy and maximal expected utility in terms of the model parameters and a Riccati ODE.

Assume that there exists a financial market with one riskless bond with zero interest rate and $d$ risky assets $H=(H_1, \ldots , H_d)$. 
The process $H$ is modeled as stochastic exponential $H=H_0 \mathcal E(N)$, where $N=(N_1, \ldots, N_d)$ is given by 
\begin{align}
\label{price}
 dN_t 
& = R_t \eta  dt + \sqrt{R_t} d Q_t,  \quad t \in [0,T],
\end{align}
with $Q$ being a vector Brownian motion with values in $\R^d$ and $\eta \in \R^d$ being a constant vector.
The stochastic volatility process is continuous and affine with admissible parameter set $(\alpha,b,\beta^{ij},0,0)$, i.e. it satisfies the SDE
\begin{align}
\label{risk}
 dR_t= (b + B(R_t)) dt + \sqrt{R_t} dW_t \Sigma + \Sigma^{\top} dW_t^{\top} \sqrt{R_t}, \quad R_0= r \in S_d^+, \quad t \in [0,T],
\end{align}
where $W$ is a $d \times d$ matrix-valued Brownian motion.
The Brownian motion $Q$ driving the assets returns and the Brownian motion $W$ of the stochastic covariation matrix are allowed to be correlated in a certain way.
Let $\rho=(\rho_1, \ldots,\rho_d)^{\top}$ be a vector with entries $\rho_i \in [-1,1]$, $i=1,\ldots ,d$, and such that $\rho^{\top} \rho \leq 1$.
With a $d$-dimensional Brownian motion $D$ independent of $W$ we can write
\begin{align*}
dQ_t=dW_t \rho + \sqrt{1-\rho^{\top} \rho} dD_t.
\end{align*}
Hence the correlation between the scalar Brownian motions $Q^i$ and $W^{mn}$ is given by $\rho_n$ if $i=m$ and else it is $0$.
The structure of the correlation between $Q$ and $W$ has been chosen in this way in order to ensure the model to be affine, which can be seen in the following Proposition.

\begin{prop}
\label{prop:contstochvol}
The pair of processes $(R,N)$ is a multivariate stochastic volatility model with functions $\Phi$ and $\Psi$ solving
\begin{align}
\label{eqn:laplacecont}
\frac{\partial  \Phi(t,u,v)} {\partial t} &=  b \Psi(t,u,v),
\\ \nonumber
\Phi(0,u,v)&=0,
\\ \nonumber
  \frac{\partial \Psi(t,u,v)}{\partial t}
  &=
  2 \Psi(t,u,v) \alpha \Psi(t,u,v) + B^*(\Psi(t,u,v)) 
+ \left( v- \eta \right) \rho^{\top} \Sigma \Psi(t,u,v) 
\\ 
\label{eqn:laplacecont2}
& \quad
+ \Psi(t,u,v) \Sigma^{\top} \rho \left( v - \eta\right)^{\top} +\frac12 v v^{\top},
\\\nonumber
\Psi(0,u,v)&=u,
\end{align}
for all $(t,u,v) \in \mathcal Q$.
\end{prop}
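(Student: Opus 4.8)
The plan is to verify the three defining properties of a multivariate affine stochastic volatility model for the pair $(R,N)$, the only substantial point being the exponentially affine form of the Fourier--Laplace transform together with the Riccati system \eqref{eqn:laplacecont}--\eqref{eqn:laplacecont2}. Property (i) is immediate: $R$ solves \eqref{risk} and $N$ solves \eqref{price} with continuous coefficients, so $(R,N)$ has continuous paths and is in particular stochastically continuous, while the Markov property of the pair follows from uniqueness of the system \eqref{price}--\eqref{risk} (pathwise uniqueness holds on the interior $S_d^{++}$ and the behaviour of $\sqrt{\cdot}$ at $\partial S_d^+$ is controlled as for the affine process $R$ itself in \cite{cfmt09}; conditionally on the path of $R$ the process $N$ is an explicit stochastic integral).

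For property (ii) I would argue as in the proof of Lemma \ref{lem:laplaceexpaff}, first reconciling the stated dynamics with the risk--neutral measure $\mathbb Q$. One removes the drift $R_t\eta\,dt$ from $N$ by the Esscher--type change of measure with density $\mathcal E(-\int_0^{\cdot}\eta^{\top}\sqrt{R_s}\,dQ_s)$, which is a genuine martingale by Theorem \ref{thm:martingale} (applied with $\sigma_Q=-\eta$ and all remaining coefficients zero, \eqref{eqn:ehoch} being vacuous in the continuous case). Under the resulting measure $\mathbb Q$ the process $N$ is a local martingale with $dN_t=\sqrt{R_t}\,dQ_t$, whereas, because $Q$ and $W$ are correlated through $dQ_t=dW_t\rho+\sqrt{1-\rho^{\top}\rho}\,dD_t$, the volatility $R$ acquires the additional linear drift $-R_t\eta\rho^{\top}\Sigma-\Sigma^{\top}\rho\eta^{\top}R_t$; it is precisely this term that generates the $(v-\eta)$ contributions in \eqref{eqn:laplacecont2}. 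Now fix $(t,u,v)\in\mathcal Q$, set $M_s:=\E^{\mathbb Q}[e^{\operatorname{Tr}(uR_t)+v^{\top}N_t}\mid\mathcal F_s]$ for $s\le t$, and make the exponentially affine ansatz $M_s=h(s,R_s,N_s)$ with $h(s,r,n)=\exp(\operatorname{Tr}(\Psi(t-s,u,v)r)+v^{\top}n+\Phi(t-s,u,v))$, so that $\Psi(0,u,v)=u$ and $\Phi(0,u,v)=0$. Applying It\^o's formula to $h(s,R_s,N_s)$, using the $\mathbb Q$--dynamics just described, the covariations $d\langle R_{ij},R_{kl}\rangle_s=A_{ijkl}(R_s)\,ds$ together with the identity \eqref{eqn:A}, $d\langle N_i,N_j\rangle_s=(R_s)_{ij}\,ds$ and $d\langle R_{ij},N_k\rangle_s=((R_s)_{ik}(\Sigma^{\top}\rho)_j+(R_s)_{jk}(\Sigma^{\top}\rho)_i)\,ds$, and the martingale property of $M$ (so that its finite--variation part vanishes), one divides by $h(s,R_s,N_s)>0$ and is left with an identity of the form $\operatorname{Tr}(C(s)R_s)+c(s)=0$ in which $C$ and $c$ depend only on $\Psi$, $\Phi$ and their time derivatives. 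Since for $s>0$ the law of $R_s$ is not supported on a proper affine subspace of $S_d$, one may separately equate the (symmetric) coefficient of $R_s$ and the constant: the former gives the generalized Riccati equation \eqref{eqn:laplacecont2} for $\Psi$ --- the $2\Psi\alpha\Psi$ coming from $\langle R,R\rangle$, $B^{*}(\Psi)$ from the linear drift $B$, $\frac12 vv^{\top}$ from $\langle N,N\rangle$, and the remaining terms from $\langle R,N\rangle$ and the extra $\mathbb Q$--drift of $R$ --- and the latter the linear equation \eqref{eqn:laplacecont} for $\Phi$, in which only $\operatorname{Tr}(\Psi b)=b\Psi$ survives.

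It then remains to see that the system is well posed on $\mathcal Q$ and to return to (iii). The right--hand side of \eqref{eqn:laplacecont2} is polynomial, hence locally Lipschitz on $S_d+\mathrm{i}S_d$, so $\Psi(\cdot,u,v)$ has a unique maximal local solution and $\Phi(\cdot,u,v)$ is obtained from it by a direct integration; invoking \cite{sv10} Theorem 3.7 exactly as in Lemma \ref{lem:laplaceexpaff} identifies the maximal existence time with the supremum of $t$ such that $(t,u,v)\in\mathcal Q$, which both justifies the ansatz and yields \eqref{eqn:expaffinstochvolmod} on all of $\mathcal Q$ (alternatively, one may read off \eqref{eqn:laplacecont}--\eqref{eqn:laplacecont2} from the general characterization of affine processes on $S_d^+\times\R^d$ in \cite{christa}). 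Property (iii), that $H=H_0\mathcal E(N)$ is a $\mathbb Q$--martingale, follows from the construction of $\mathbb Q$ above together with the martingale criterion of Section \ref{sec:martprop}.

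The step I expect to be the main obstacle is the bookkeeping around the change of measure: one must verify that the Girsanov density removing the drift of $N$ is a true and not merely local martingale --- this is the role of Theorem \ref{thm:martingale} --- and then transport the dynamics of $R$ correctly through the correlation structure, since it is the induced extra linear drift, rather than the drift of $N$ itself, that accounts for the $\eta$--dependence in \eqref{eqn:laplacecont2}. A secondary technical nuisance is the failure of $\sqrt{\cdot}$ to be Lipschitz on $\partial S_d^+$, which has to be handled --- as for affine processes on $S_d^+$ in general --- when invoking uniqueness, the Markov property and It\^o's formula.
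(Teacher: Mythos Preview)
Your proposal is correct and follows essentially the same route as the paper: the same Esscher--type change of measure via Theorem \ref{thm:martingale} with $\sigma_Q=-\eta$, the same Girsanov transport of both $Q$ and $W$ (yielding the extra linear drift $-R_t\eta\rho^{\top}\Sigma-\Sigma^{\top}\rho\eta^{\top}R_t$ under $\mathbb Q$), the same exponential--affine ansatz for the conditional transform, It\^o's formula with the bounded--variation part set to zero, equating coefficients, and the appeal to local Lipschitz continuity plus \cite{sv10} Theorem 3.7 for well--posedness on $\mathcal Q$. Your additional remarks on the Markov property, the support argument for separating coefficients, and the boundary behaviour of $\sqrt{\cdot}$ are reasonable elaborations that the paper leaves implicit.
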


\begin{proof}
That $(R,N)$ is stochastically continuous follows by definition.
From Theorem \ref{thm:martingale} we obtain with $\sigma_W \equiv \sigma_{\hat Q} \equiv \sigma_\mu \equiv 0$ and $\sigma_Q \equiv -\eta$ that the process $\mathcal E (X)$, where $X_t= - \int_0^{t} \eta^{\top} \sqrt{R_s} dQ_s$, $t \in [0,T]$, is a martingale.
This means that we can define a new probability measure $\mathbb Q$ by $d \mathbb Q = \mathcal E \left( X \right)_T d\mathbb P$.
The theorems of Girsanov and L\'evy then give that $\tilde Q= Q + \int_0^{\cdot} \sqrt{R_s} \eta ds$ is a $\mathbb Q$-Brownian motion.
Hence $N=\int_0^{\cdot} \sqrt{R_s} d\tilde Q_s$ is a local $\mathbb Q$-martingale and again, according to Lemma 3.1 and Proposition 3.1 in \cite{k04} the asset price process $H=\mathcal E \left( N \right)$ is a $\mathbb Q$-martingale.

It remains to prove formula \eqref{eqn:expaffinstochvolmod}.
Since for all $t \in [0,T]$, the covariation $d\langle W_{ij},X\rangle_t= - (\sqrt{R_t} \eta)_i \rho_j dt$, $i,j=1, \ldots,d$,  
we have by Girsanov's theorem that 
\begin{align*}
  \tilde W=W+ \int_0^{\cdot} \sqrt{R_s} \eta \rho^{\top} ds, 
\end{align*}
is a Brownian motion.
Hence the dynamics of \eqref{price} and \eqref{risk} under $\mathbb Q$ can be written as
\begin{align*}
dN_t &=
\sqrt{R_t} d \tilde Q_t ,
\\
 dR_t&= (b + B(R_t)- R_t \eta \rho^{\top} \Sigma - \Sigma^{\top} \rho \eta^{\top} R_t) dt + \sqrt{R_t} d \tilde W_t \Sigma + \Sigma^{\top} d \tilde W_t^{\top} \sqrt{R_t}.
\end{align*}

Consider the conditional Fourier-Laplace transform $\E^{\mathbb Q} \left[ e^{\operatorname{Tr} \left( u R_t\right) + v^\top  N_t} | (R_s,N_s)\right]$ under the measure $\mb Q$ for $s \leq t$.
Since $R$ and $N$ are time-homogeneous processes, we have that for $(t,u,v) \in \mathcal Q$ the process 
\begin{align*}
  h(t-s,R_s,N_s) &=\E^{\mathbb Q} \left[ e^{\operatorname{Tr} \left( u R_t\right) + v^\top  N_t} | (R_s,N_s)\right]
\end{align*}
is a martingale.
We conjecture that the conditional Fourier-Laplace transform is of exponentially affine form, more precisely that there exist functions $\Psi:\mathcal Q \to S_d+\mathrm{i}S_d$ and $\Phi:\mathcal Q \to \mathbb C^d$ such that $h$ has the form
\begin{align*}
  h(t-s,R_s,N_s)&=
\exp \left(  \operatorname{Tr}\left( \Psi(t-s,u,v) R_s \right) + v^\top N_s + \Phi(t-s,u,v) \right).
\end{align*}
Applying It\^o's formula to $h(t-s,R_s,N_s)$ and taking into account that it is a martingale, the bounded variation term needs to satisfy
\begin{align*}
  0&=
  \operatorname{Tr} \l( \frac{\partial \Psi(t-s,u,v)}{\partial s} R_s \r)
+ \frac{\partial \Phi(t-s,u,v)}{\partial s}
+ 2 \operatorname{Tr}(R_s \Psi(t-s,u,v) \alpha \Psi(t-s,u,v)) 
+ \frac12 v^\top R_s v
\\
& \quad
+ \operatorname{Tr}\left( (b + B(R_s) - R_s \eta \rho^{\top} \Sigma - \Sigma^{\top} \rho \eta^{\top} R_s) \Psi(t-s,u,v) \right)
+ 2 \operatorname{Tr}(\Psi(t-s,u,v) R_s \Sigma^\top \rho v^\top),
\end{align*}
for $s \leq t$.
Here we have used the fact that $d \langle R_{ij}, N_k\rangle_t = 2 (\rho \Sigma)_i R_{t,jk} dt$ for $i,j,k=1,\ldots,d$.
Identifying coefficients leads to the following system of ODEs
\begin{align*}
-\frac{\partial \Psi(t-s,u,v)}{\partial s} 
&= 
2 \Psi(t-s,u,v) \alpha \Psi(t-s,u,v) + B^*(\Psi(t-s,u,v)) 
\\ \nonumber
& \quad
+ \left( v - \eta \right) \rho^{\top} \Sigma \Psi(t-s,u,v) 
+ \Psi(t-s,u,v) \Sigma^{\top} \rho \left( v - \eta\right)^{\top} +\frac12 v v^{\top}, 
\\ \nonumber
-\frac{\partial \Phi(t-s,u,v)}{\partial s} &=  b \Psi(t-s,u,v),
\end{align*}
with boundary conditions
\begin{align*}
  \Psi(t,u,v)=u, \quad \Phi(t,u,v)=0.
\end{align*}
With a change of variable and since $\Phi$ can be determined via integration as soon as $\Psi$ is known, we derive \eqref{eqn:laplacecont},
provided there exists a solution to the ODE for $\Psi$, i.e.
\begin{align*}
  \frac{\partial \Psi(t,u,v)}{\partial t} &= \theta(\Psi(t,u,v),v), \quad 
\Psi(0,u,v)=u,
\end{align*}
with
\begin{align*}
\theta(u,v)&= 2 u\alpha u + B^*(u) 
+ (v-\eta) \rho^{\top} \Sigma u 
+  u \Sigma^{\top} \rho (v - \eta )^{\top} 
+\frac12 vv^{\top} ,
\end{align*}
for all $(t,u,v) \in \mathcal Q$.
Since $\theta$ is locally Lipschitz, there exists a unique local solution $\Psi$ of \eqref{eqn:laplacecont2}. 
From \cite{sv10} Theorem 3.7 we know that the time before the ODE possibly explodes coincides with the expectation on the left hand side of \eqref{eqn:expaffinstochvolmod} to exist.
This gives the result.
\end{proof}

\begin{rmk}
\label{rmk:ordexp}
  We have chosen to model the asset price process $H$ as stochastic exponential of $N$.
It is also possible to model $H$ as ordinary exponential, i.e. $H= H_0 e^N$.
In this case we have $H=H_0 \mathcal E (\tilde N)$ with $d\tilde N_t= \sqrt{R_t} dQ_t + R_t (\eta + \frac12)dt$, $t \in [0,T]$.
Hence we are back in the setting considered above.
\end{rmk}

\subsubsection{Power utility}

We set $F=0$ and assume that the investor's preferences are described by the power utility function
 \begin{align*}
  U(x) = \frac{1}{\gamma} x^{\gamma}, \quad \quad x \geq 0, \ \gamma \in (0,1).
 \end{align*}
Let $\mathcal A$ be the set of all $d$-dimensional predictable processes $\pi$ that satisfy $\int_0^T \pi_s^{\top} \pi_s ds < \infty$ a.s.
Note that $\pi_i$ denotes the fraction of the wealth invested in stock $i$, where $i=1, \ldots, d$.
Any process $\pi \in \mathcal A$ is called an admissible (trading) strategy.
Under these assumptions the wealth process evolves as follows
\begin{align*}
 X_t^{x,\pi} 
&=
x+ \int_0^t X_s^{x, \pi} \pi_s^{\top} dN_s = x + \int_0^t X_s^{x, \pi} \pi_s^{\top} R_s \eta \ ds + \int_0^t X_s^{x, \pi} \pi_s^{\top} \sqrt{R_s} d Q_s, 
\end{align*}
for $t \in [0,T]$.
It can also be written as stochastic exponential
\begin{align*}
 X_t^{x,\pi} = x \mathcal E \left( \int_0^t \pi_s^{\top} R_s \eta ds + \int_0^t \pi_s^{\top} \sqrt{R_s} d Q_s \right), \quad t \in [0,T].
\end{align*}
As described earlier the investor wants to maximize their expected utility of terminal wealth. 
In order to model interest and exchange rates later, we want to take the function
\begin{align*}
 F(O_T) &= Tr(aO_T),
\end{align*}
into account, where $a$ is a $d \times d$-matrix and $O_T$ the final value of the process
\begin{align}
\label{eqn:Ot}
   O_t &= \int_0^t \sigma \sqrt{R_s} d\hat Q_s + \int_0^t (o_1 +o_2 R_s)ds, \quad t \in [0,T].
\end{align}
Here $o_1,o_2 ,\sigma \in M_d$ and $\hat Q$ is a $d \times d$-dimensional Brownian motion independent of the Brownian motions $W$ and $Q$.
From now on we will write $F^{a,\sigma,o_1,o_2}$ for $F(O_T)$ with $O_T$ given by \eqref{eqn:Ot} to depict the structure of $F$ in more detail.
We want to solve the maximization problem
\begin{align}
\label{maxprobpower} 
V^{a,\sigma,o_1,o_2}(x)= \sup_{\pi \in \mathcal A} \E \left[ \frac{1}{\gamma} \Big(X_T^{x,\pi} \exp(F^{a,\sigma,o_1,o_2})\Big)^{\gamma}  \right], \quad x \geq 0.
\end{align}
Our main result describes the value function and the optimal strategy explicitly in terms of the model parameters.
\begin{thm}
\label{thmpower}
Let the linear diffusion term $\alpha$ belong to $S_d^{++}$ and suppose the linear drift term
$B$ in \eqref{risk} is of the form $B(r)=r\hat B  +  \hat B^{\top}r$, $r \in S_d^+$, with $\hat B \in M_d$.
Define the matrix-valued functions $A:[0,T] \to M_{2d}$ and $A_{ij}:[0,T] \to M_d$, $i,j =1, \ldots,d$, by
\begin{align}
\label{eqn:Apower}
  A(t)
&=\left( \begin{matrix}
             A_{11}(t)  & A_{12}(t) \\
  	     A_{21}(t)	& A_{22}(t)
             \end{matrix}
\right)
= \exp \left( (T-t) \left( \begin{matrix}
                            \frac{\gamma}{1-\gamma} \Sigma^{\top} \rho \eta^{\top} + \hat B^{\top} &  -2 \alpha -\frac{2 \gamma}{1-\gamma} \Sigma^{\top} \rho \rho^{\top} \Sigma \\
			     \frac12 \sigma^\top a a^\top \sigma +\frac{\gamma}{2(1- \gamma)} \eta \eta^\top + a o_2	&	- \frac{\gamma}{1-\gamma} \eta \rho^{\top} \Sigma - \hat B 
                          \end{matrix} \right)
\right).
\end{align}

\noindent
Then the value function and the corresponding optimal strategy are given by
\begin{align*}
  V^{a,\sigma,o_1,o_2}(x)
&=
\frac{1}{\gamma} x^{\gamma} 
\exp\left(
	 \operatorname{Tr}(  A_{22}^{-1}(0)  A_{21}(0) r ) + \int_0^T \operatorname{Tr}\l(A_{22}^{-1}(s)  A_{21}(s) b + ao_1 \r) ds
\right),
\\
\pi^{opt}_t&= \frac{1}{1-\gamma} \left( \eta + 2 A_{22}^{-1}(t)  A_{21}(t) \Sigma^{\top} \rho\right) , \quad x \geq 0, \ t \in [0,T].
\end{align*}
\end{thm}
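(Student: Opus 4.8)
The plan is to combine the martingale method with Theorem~\ref{thm:matrixBSDEsolution}. First I would introduce, for $\pi\in\mathcal A$, the process
\[
 L^\pi_t:=\frac1\gamma\bigl(X^{x,\pi}_t\bigr)^\gamma\exp(Y_t),\qquad t\in[0,T],
\]
where $(Y,Z,\hat Z,K)$ solves the BSDE \eqref{defBSDE} associated with the continuous affine process $R$ of \eqref{risk} (so $m=\mu=0$ and $K\equiv 0$), with terminal value $Y_T=F^{a,\sigma,o_1,o_2}=\operatorname{Tr}(aO_T)$, $O$ as in \eqref{eqn:Ot}, and with a generator $f$ yet to be chosen. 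Since $X^{x,\pi}$ is a stochastic exponential it is strictly positive, and It\^o's formula shows that the drift of $L^\pi$ equals $L^\pi$ times
\[
 \gamma\pi^\top R\eta-\frac{\gamma(1-\gamma)}2\pi^\top R\pi+\gamma\pi^\top\sqrt{R}\,Z\rho-f+\frac12\bigl(\operatorname{Tr}(Z^\top Z)+\operatorname{Tr}(\hat Z^\top\hat Z)\bigr),
\]
a concave quadratic in $\pi$ (using $0<\gamma<1$ and, since $\alpha\in S_d^{++}$, $R_t\in S_d^{++}$ for a.e.\ $t$ a.s.). Taking $f$ to be the pointwise supremum of this expression over $\pi$ makes $L^\pi$ a local supermartingale for every $\pi\in\mathcal A$ and a local martingale for the maximiser $\pi^{opt}_t=\frac1{1-\gamma}\bigl(\eta+(\sqrt{R_t})^{-1}Z_t\rho\bigr)$, and reading off the coefficients exhibits $f$ as a special case of \eqref{matrixgenerator}: all the $g$-functions and all coefficient functions vanish except
\[
 c_{zz}=\tfrac12 I_d+\tfrac{\gamma}{2(1-\gamma)}\rho\rho^\top,\quad c_{z\sqrt x}=\tfrac{\gamma}{1-\gamma}\rho\eta^\top,\quad c_x=\tfrac{\gamma}{2(1-\gamma)}\eta\eta^\top,\quad c_{\hat z\hat z}=\tfrac12 I_d.
\]
(This step may equivalently be isolated in a supermartingale lemma, in the spirit of the exponential case.)

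Next I would apply Theorem~\ref{thm:matrixBSDEsolution}. The terminal condition is affine with vanishing $S_d$-part and vanishing additive constant, $f$ has the required structure, and \eqref{eqn:integrability1} is vacuous ($m=\mu=0$); hence the BSDE has the explicit solution \eqref{solmatrixBSDE}, i.e.
\[
 Y_t=\operatorname{Tr}(\Gamma(t,0)R_t)+\operatorname{Tr}(aO_t)+w(t,0,0),\qquad Z_t=2\sqrt{R_t}\,\Gamma(t,0)\Sigma^\top,\qquad \hat Z_t=\sqrt{R_t}\,\sigma^\top a,
\]
where $\Gamma(\cdot,0)$ solves the continuous Riccati ODE \eqref{eqn:conttheta} with terminal value $0$. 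Plugging the coefficients above into the definitions of $\mathscr L(t)$ and $\mathscr C(t)$ in Theorem~\ref{thm:matrixBSDEsolution} gives the constant data $\mathscr L=\frac{\gamma}{1-\gamma}\eta\rho^\top\Sigma$ and $\mathscr C=\frac12\sigma^\top aa^\top\sigma+\frac{\gamma}{2(1-\gamma)}\eta\eta^\top+ao_2$. Since $\alpha,c_{zz}\in S_d^{++}$ and, by hypothesis, $B(r)=r\hat B+\hat B^\top r$ is of the form (A1), Proposition~\ref{easyriccati} applies with terminal value $u=0$: it yields existence of $\Gamma(\cdot,0)$ on all of $[0,T]$ and the closed form $\Gamma(t,0)=A_{22}^{-1}(t)A_{21}(t)$, and substituting $\mathscr L$ and $\mathscr C$ the block matrix there becomes precisely $A(t)$ of \eqref{eqn:Apower}. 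Finally, since here $c_t=c_y=0$, $o_1$ is constant and $u=0$, the linear equation \eqref{eqn:odew} integrates (cf.\ Remark~\ref{rmk:integration}) to $w(0,0,0)=\int_0^T\operatorname{Tr}\bigl(\Gamma(s,0)b+ao_1\bigr)ds$, so that, using $O_0=0$,
\[
 Y_0=\operatorname{Tr}\bigl(A_{22}^{-1}(0)A_{21}(0)r\bigr)+\int_0^T\operatorname{Tr}\bigl(A_{22}^{-1}(s)A_{21}(s)b+ao_1\bigr)ds,
\]
and inserting $Z_t=2\sqrt{R_t}\,\Gamma(t,0)\Sigma^\top$ into $\pi^{opt}$ gives $\pi^{opt}_t=\frac1{1-\gamma}\bigl(\eta+2A_{22}^{-1}(t)A_{21}(t)\Sigma^\top\rho\bigr)$, as claimed.

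It remains to justify optimality, which splits into two parts. Admissibility of $\pi^{opt}$ is immediate: $\Gamma(\cdot,0)$ is continuous on $[0,T]$, so $\pi^{opt}$ is a bounded deterministic function and $\int_0^T(\pi^{opt}_s)^\top\pi^{opt}_s\,ds<\infty$. For the value identity, each $L^\pi$ is a nonnegative local supermartingale, hence a genuine supermartingale, giving the easy bound $V^{a,\sigma,o_1,o_2}(x)\le L^\pi_0=\frac1\gamma x^\gamma e^{Y_0}$ (the right side being independent of $\pi$). The converse requires that $L^{\pi^{opt}}$ be a \emph{true} martingale, and this is the main obstacle. To overcome it I would write $L^{\pi^{opt}}_t/L^{\pi^{opt}}_0=\mathcal E(P)_t$ with
\[
 P_t=\int_0^t\gamma(\pi^{opt}_s)^\top\sqrt{R_s}\,dQ_s+\int_0^t 2\operatorname{Tr}\bigl(\Sigma\Gamma(s,0)\sqrt{R_s}\,dW_s\bigr)+\int_0^t\operatorname{Tr}\bigl(a^\top\sigma\sqrt{R_s}\,d\hat Q_s\bigr),
\]
which is precisely of the form \eqref{eqn:P} (with $\sigma_Q(s)=\gamma\pi^{opt}_s$, $\sigma_W(s)=2\Sigma\Gamma(s,0)$, $\sigma_{\hat Q}(s)=a^\top\sigma$, $\sigma_\mu\equiv 0$, all continuous) and for which \eqref{eqn:ehoch} holds trivially since $m\equiv 0$; hence Theorem~\ref{thm:martingale} yields that $\mathcal E(P)$ is a true martingale, so $\E[L^{\pi^{opt}}_T]=L^{\pi^{opt}}_0$. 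Combined with the supermartingale inequality this gives $V^{a,\sigma,o_1,o_2}(x)=\frac1\gamma x^\gamma e^{Y_0}$, attained at $\pi^{opt}$, and the stated formulas follow. (Positivity of the denominators $A_{22}(t)$, i.e.\ non-explosion of \eqref{eqn:conttheta} on $[0,T]$, is covered by Proposition~\ref{easyriccati} under the stated hypotheses.)
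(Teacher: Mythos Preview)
Your approach is essentially the same as the paper's: the paper organizes the argument by first isolating a supermartingale lemma (Lemma~\ref{lem:supermartpower}) that identifies the generator \eqref{generatorpower}, solves the BSDE via Theorem~\ref{thm:matrixBSDEsolution} and Proposition~\ref{easyriccati}, and verifies the true martingale property through Theorem~\ref{thm:martingale}; the theorem itself is then a two-line application of the martingale optimality principle. You do exactly the same steps in the same order, just inline. One small slip: since $U(X_T^{x,\pi}e^{F})=\tfrac1\gamma (X_T^{x,\pi})^\gamma e^{\gamma F}$, the terminal condition for $Y$ should be $\gamma F^{a,\sigma,o_1,o_2}$ rather than $F^{a,\sigma,o_1,o_2}$, as the paper writes in \eqref{BSDEpower}; this only affects where the factor $\gamma$ sits in the $a$-dependent terms and does not change the method.
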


Before we prove the above theorem we motivate the present approach which can also be found in \cite{HuImkellerMuller}.
Note that in contrast to \cite{HuImkellerMuller} the coefficients in the evolution of $N$ are not bounded.
We solve this problem by using the martingale optimality principle, in particular we aim to construct processes $L^{\pi}$ as well as a strategy $\pi^{opt}$ such that
\begin{itemize}
 \item $L^{\pi}_T = U(X_T^{x,\pi} \exp(F^{a,\sigma,o_1,o_2}))$ for all $\pi \in \mathcal A$,
 \item $L^{\pi}$ is a supermartingale for all $\pi \in \mathcal A$ and there is a particular strategy $\pi^{opt} \in \mathcal A$ such that $L^{\pi^{opt}}$ is a martingale.
\end{itemize}
Note that our assumptions on the filtration then imply that $L_0^{\pi}=C$ for all $\pi \in \mathcal A$ and a constant $C > 0$.
Applying the utility function to $X^{x, \pi}_T \exp(F^{a,\sigma,o_1,o_2})$ we get
\begin{align*}
& \frac{1}{\gamma} \Big( X_T^{x, \pi} \exp(F^{a,\sigma,o_1,o_2}) \Big)^{\gamma} 
\\
&= 
\frac{1}{\gamma} x^{\gamma} \exp \left( \int_0^T \gamma \pi_s^{\top} R_s \eta ds + \int_0^T \gamma \pi_s^{\top} \sqrt{R_s} d Q_s -\frac12 \int_0^T \gamma \pi_s^{\top} R_s \pi_s ds + \gamma F^{a,\sigma,o_1,o_2}\right).
\end{align*}
This suggests the following choice of $L^{\pi}$
\begin{align*}
 L_t^{\pi} = x^{\gamma} \exp \left( \int_0^t \gamma \pi_s^{\top} R_s \eta ds + \int_0^t \gamma \pi_s^{\top} \sqrt{R_s} d Q_s -\frac12 \int_0^t \gamma \pi_s^{\top} R_s \pi_s ds + Y_t \right),
\end{align*}
where $Y$ is the first component of the solution of a BSDE with terminal condition $\gamma F^{a,\sigma,o_1,o_2}$.
More precisely we want to find a generator $f$ for the BSDE
\begin{align}
  \label{BSDEpower} 
Y_t 
&= 
\gamma F^{a,\sigma,o_1,o_2} - \int_t^T \operatorname{Tr}( Z^{\top}_s dW_s) - \int_t^T \operatorname{Tr}( \hat Z^{\top}_s d \hat Q_s) + \int_t^T f(R_s,Z_s,\hat Z_s) ds, \quad t \in [0,T],
\end{align}
such that its solution $(Y,Z,\hat Z)$ implies that $L^{\pi}$ meets the above requirements.

\begin{lem}
\label{lem:supermartpower}
Let $\alpha \in S_d^{++}$, $B(r)=r\hat B  +  \hat B^{\top}r$ with $\hat B \in M_d$ and recall \eqref{eqn:Apower}.
If the generator $f: S_d^+ \times M_d \to \R$ is of the form 
\begin{align}
\label{generatorpower}
 f(r,z, \hat z) &= \frac12 \operatorname{Tr}(zz^{\top}) + \frac12 \operatorname{Tr}(\hat z \hat z^{\top}) + \frac{\gamma}{2(1-\gamma)} | \sqrt{r} \eta + z \rho|^2, \quad (r,z,\hat z) \in S_d^+ \times M_d \times M_d ,
\end{align}
then \eqref{BSDEpower} is solved by 
\begin{align}
\label{eqn:BSDEsolutionpower}
Y_t & = 
\operatorname{Tr}(A_{22}^{-1}(t)  A_{21}(t) R_t)
+ \operatorname{Tr} (a O_t)
+ \int_{t}^T \operatorname{Tr} \l(A_{22}^{-1}(s)  A_{21}(s) b + ao_1\r) ds ,
\\ \nonumber
Z_t & = 2 \sqrt{R_t} A_{22}^{-1}(t)  A_{21}(t) \Sigma^{\top},
\\ \nonumber
\hat Z_t & = \sqrt{R_t} \sigma^\top a, 
\qquad t \in [0,T].
\end{align}
Moreover $L^{\pi}$ is a supermartingale for every strategy $\pi \in \mathcal A$ and for
\begin{align}
\label{optstratpower}
\pi^{opt}_t & = \frac{1}{1- \gamma} \left( \eta + 2 A_{22}^{-1}(t) A_{21}(t) \Sigma^{\top} \rho \right) , \quad t \in [0,T],
\end{align}
the process $L^{\pi^{opt}}$ is a martingale.
\end{lem}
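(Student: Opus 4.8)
The plan is to establish the two assertions of the lemma in turn: that the triple \eqref{eqn:BSDEsolutionpower} solves the BSDE \eqref{BSDEpower}, and that the resulting family $(L^\pi)_{\pi\in\mathcal A}$ is a supermartingale for every $\pi$, being a martingale for the candidate $\pi^{opt}$ of \eqref{optstratpower}.

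For the first assertion I would check that the generator \eqref{generatorpower} belongs to the admissible class \eqref{matrixgenerator}. Expanding $|\sqrt r\eta+z\rho|^2=\operatorname{Tr}(r\eta\eta^\top)+2\operatorname{Tr}(z\rho\eta^\top\sqrt r)+\operatorname{Tr}(z\rho\rho^\top z^\top)$ and comparing coefficients, this amounts to $c_{zz}(t)\equiv\tfrac12 I_d+\tfrac{\gamma}{2(1-\gamma)}\rho\rho^\top$, $c_{z\sqrt x}(t)\equiv\tfrac{\gamma}{1-\gamma}\rho\eta^\top$, $c_x(t)\equiv\tfrac{\gamma}{2(1-\gamma)}\eta\eta^\top$, $c_{\hat z\hat z}(t)\equiv\tfrac12 I_d$, with $c_y=c_t\equiv 0$, $c_{\hat z z}=c_{\hat z\sqrt x}\equiv 0$, and — since $R$ is continuous ($m=\mu=0$) — all jump coefficients $g_\bullet$ equal to zero. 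With these data the matrices $\mathscr L$ and $\mathscr C$ of Theorem~\ref{thm:matrixBSDEsolution} are constant and the generalized Riccati ODE \eqref{eqn:matrixode} collapses to the continuous, constant-coefficient equation \eqref{eqn:conttheta}; because $B(r)=r\hat B+\hat B^\top r$ has the form required by (A1), $\alpha\in S_d^{++}$ and $c_{zz}\succeq\tfrac12 I_d\succ 0$, Proposition~\ref{easyriccati} applies with terminal value $0$ and identifies $\Gamma(t,0)=A_{22}^{-1}(t)A_{21}(t)$, with $A$ the matrix exponential in \eqref{eqn:Apower}. Theorem~\ref{thm:matrixBSDEsolution} then yields the stated $(Y,Z,\hat Z)$, the linear equation \eqref{eqn:odew} for $w$ integrating — since $c_y=c_t\equiv 0$ and $o_1$ is constant — to $w(t)=\int_t^T\operatorname{Tr}\big(\Gamma(s,0)b+ao_1\big)\,ds$. (Equivalently one may bypass the general theorem and apply It\^o's formula directly to $\operatorname{Tr}(\Gamma(t,0)R_t)+\operatorname{Tr}(aO_t)+w(t)$, using \eqref{risk} and \eqref{eqn:Ot}, and match the finite-variation part against $-f$.)

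For the second assertion I would write $L^\pi_t=x^\gamma\exp(G^\pi_t)$ with $G^\pi_t=\int_0^t\gamma\pi_s^\top R_s\eta\,ds+\int_0^t\gamma\pi_s^\top\sqrt{R_s}\,dQ_s-\tfrac12\int_0^t\gamma\pi_s^\top R_s\pi_s\,ds+Y_t$, apply It\^o to the exponential to get $dL^\pi_t=L^\pi_t\big(dG^\pi_t+\tfrac12\,d\langle G^\pi\rangle_t\big)$, and insert the dynamics of $Y$ from \eqref{BSDEpower} together with $Z_t=2\sqrt{R_t}\Gamma(t,0)\Sigma^\top$, $\hat Z_t=\sqrt{R_t}\sigma^\top a$ and the decomposition $dQ_t=dW_t\rho+\sqrt{1-\rho^\top\rho}\,dD_t$. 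The two pure squares in $f$, namely $\tfrac12\operatorname{Tr}(Z_tZ_t^\top)$ and $\tfrac12\operatorname{Tr}(\hat Z_t\hat Z_t^\top)$, exactly cancel the contributions of $W$ and $\hat Q$ to $\tfrac12\,d\langle G^\pi\rangle_t$; using $\sqrt{R_t}\eta+Z_t\rho=\sqrt{R_t}\big(\eta+2\Gamma(t,0)\Sigma^\top\rho\big)=(1-\gamma)\sqrt{R_t}\,\pi^{opt}_t$ and collecting what remains, the finite-variation part of $dG^\pi_t+\tfrac12\,d\langle G^\pi\rangle_t$ reduces to $-\tfrac{\gamma(1-\gamma)}{2}(\pi_t-\pi^{opt}_t)^\top R_t(\pi_t-\pi^{opt}_t)\,dt$. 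Since $\gamma\in(0,1)$ and $R_t\in S_d^+$ this is $\le 0$, so $L^\pi$ is a non-negative local supermartingale, hence a true supermartingale; for $\pi=\pi^{opt}$ the drift vanishes and $L^{\pi^{opt}}$ is a non-negative local martingale. To see it is a genuine martingale I would note that $L^{\pi^{opt}}/x^\gamma=\mathcal E(P^{opt})$, where $P^{opt}$ is the (continuous) martingale part of $G^{\pi^{opt}}$ — a process of exactly the form \eqref{eqn:P} with deterministic coefficients $\sigma_Q=\gamma\pi^{opt}$, $\sigma_W=2\Sigma\Gamma(\cdot,0)$, $\sigma_{\hat Q}=a^\top\sigma$ and $\sigma_\mu\equiv 0$ — so that condition \eqref{eqn:ehoch} holds trivially and Theorem~\ref{thm:martingale} yields the martingale property.

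The bulk of the work, and the only genuinely delicate point, is the drift computation in the second step: assembling every $dt$-term coming from $\gamma\pi^\top R\eta$, $-\tfrac\gamma2\pi^\top R\pi$, $-f(R,Z,\hat Z)$ and $\tfrac12\,d\langle G^\pi\rangle$, and recognising that the specific shape \eqref{generatorpower} of $f$ is precisely what completes the square and leaves the negative-semidefinite expression above. Everything else — the coefficient identification in step one, and the verification that $\pi^{opt}\in\mathcal A$ and that $\mathcal E(P^{opt})$ loses no mass — is routine, the latter being supplied by Theorem~\ref{thm:martingale}.
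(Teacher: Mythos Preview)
Your proposal is correct and follows essentially the same route as the paper: identify the generator \eqref{generatorpower} with the template \eqref{matrixgenerator}, invoke Proposition~\ref{easyriccati} to solve the constant-coefficient Riccati ODE as $\Gamma(t)=A_{22}^{-1}(t)A_{21}(t)$, and then Theorem~\ref{thm:matrixBSDEsolution} for the BSDE solution; for the second part, apply It\^o to $L^\pi$, complete the square to exhibit the drift as $-\tfrac{\gamma(1-\gamma)}{2}(\pi_t-\pi^{opt}_t)^\top R_t(\pi_t-\pi^{opt}_t)\,dt$, use non-negativity and Fatou to pass from local to true supermartingale, and appeal to Theorem~\ref{thm:martingale} for the martingale property of $L^{\pi^{opt}}$. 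Your coefficient $c_{z\sqrt x}=\tfrac{\gamma}{1-\gamma}\rho\eta^\top$ is in fact the correct reading of \eqref{matrixgenerator}; the paper writes $\tfrac{\gamma}{2(1-\gamma)}\rho\eta^\top$ but compensates with an extra factor $2$ in its displayed ODE, so the two presentations agree.
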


\begin{proof}
Let us define the constants
$$c_{zz}= \frac12 I_d + \frac{\gamma}{2(1-\gamma)}\rho \rho^{\top}, \quad
c_{\hat z \hat z}= \frac12 I_d, \quad c_{z \sqrt x}=\frac{\gamma}{2(1-\gamma)} \rho \eta^{\top}, \quad c_x=\frac{\gamma}{2(1-\gamma)}\eta \eta^{\top}.$$
Note that $c_{zz}$ is positive definite.
By Proposition \ref{easyriccati} we know that the ODE 
\begin{align*}
- \frac{d\Gamma(t)}{d t} &=
 \Gamma(t) \Sigma^{\top} c_{zz} \Sigma \Gamma(t)  \nonumber
+ B^*(\Gamma(t)) + 2\Gamma(t)\Sigma^{\top}c_{z \sqrt x}+ 2c_{z \sqrt x}^{\top} \Sigma \Gamma(t) + \frac12 \sigma^\top a a^\top \sigma +c_x + a o_2, 
\\
\quad \Gamma(T)&=0 , \nonumber
\end{align*}
has the solution $\Gamma(t) = A_{22}^{-1}(t)  A_{21}(t)$, $t \in [0,T]$.
We then obtain from Theorem \ref{thm:matrixBSDEsolution} that the BSDE \eqref{BSDEpower} with generator \eqref{generatorpower} is solved by \eqref{eqn:BSDEsolutionpower}.

We show the local (super)martingale property for $L^\pi$.
  With It\^o's formula applied to $L^{\pi}$ we have for all $\pi \in \mathcal A$
  \begin{align*}
&   dL_t^{\pi} 
\\
& =
L_t^{\pi} \left( 
\gamma \pi^{\top}_t \sqrt{R_t} dQ_t 
+ \operatorname{Tr}(Z_t^{\top} dW_t) 
+ \operatorname{Tr}( \hat Z^{\top}_t d \hat Q_t) 
\right) 
+ L_t^{\pi} \left( \gamma \pi^{\top}_t R_t \eta - \frac12 \gamma \pi^{\top}_t R_t \pi_t - f(R_t,Z_t,\hat Z_t) \right) dt
\\
& \quad 
+ \frac12 L_t^{\pi} \left( \left|\gamma \sqrt{R_t} \pi_t \rho^{\top} + Z_t\right|^2 + \left|\gamma \sqrt{1 - \rho^{\top} \rho}\pi^{\top}_t \sqrt{R_t}\right|^2 
+ \operatorname{Tr} (\hat Z_t \hat Z_t^\top)
\right) dt,
  \end{align*}
where we have used $dQ_t=dW_t \rho + \sqrt{1-\rho^{\top} \rho} dD_t$.
If the finite variation part satisfies $dt \otimes \mathbb P$-a.e.
\begin{align*}
&  L_t^{\pi} \left( \gamma \pi_t^{\top} R_t \eta - \frac12 \gamma \pi_t^{\top} R_t \pi_t - f(R_t,Z_t,\hat Z_t)  
+ \frac12 \left|\gamma \sqrt{R_t} \pi_t \rho^{\top} + Z_t\right|^2 + \frac12 \left|\gamma \sqrt{1 - \rho^{\top} \rho}\pi_t^{\top} \sqrt{R_t}\right|^2 
+ \frac12 |\hat Z_t|^2 \right)  
\leq 0 , 
\end{align*}
then we know that $L^\pi$ is a local supermartingale.
Indeed, since $L_t^\pi >0$ for all $t \in [0,T]$ we only need to check whether
\begin{align*}
 - f(R_t,Z_t,\hat Z_t) 
& \leq 
-\gamma \pi^{\top}_t R_t \eta 
+ \frac12 \gamma \pi^{\top}_t R_t \pi_t
- \frac12 \left|\gamma \sqrt{R_t} \pi_t \rho^{\top} + Z_t\right|^2
- \frac12 \left|\gamma \sqrt{1 - \rho^{\top} \rho}\pi^{\top}_t \sqrt{R_t}\right|^2
- \frac12 |\hat Z_t|^2 .
\end{align*}
This is equivalent to
\begin{align*}
-  f(R_t,Z_t,\hat Z_t) 
&\leq 
-\gamma \pi_t^{\top} R_t \eta 
+ \frac12 \gamma \pi^{\top}_t R_t \pi_t  
- \frac12 \left|Z_t\right|^2 
- \gamma \operatorname{Tr}( \sqrt{R_t} \pi_t \rho^{\top} Z_t^{\top})  
- \frac12 \gamma^2 \pi^{\top}_t R_t \pi_t 
- \frac12 |\hat Z_t|^2 
\\
& = 
\frac12 \gamma (1-\gamma) \left|\sqrt{R_t} \pi_t - \frac{1}{1-\gamma} ( \sqrt{R_t}\eta + Z_t \rho )\right|^2
-\frac{\gamma}{2(1-\gamma)} \left| \sqrt{R_t}\eta + Z_t \rho  \right|^2 
- \frac12 |Z_t|^2
-\frac12 |\hat Z_t|^2 .
\end{align*}
If we use \eqref{generatorpower}, we see that this inequality is true for every $\pi \in \mathcal A$.
For $\pi^{opt}$ from \eqref{optstratpower} and applying the particular form of $Z$, the above inequality turns out to be an equality and hence the process $L^{\pi^{opt}}$ is a local martingale.
Note that $\pi^{opt} \in \mathcal A$.

We proceed showing that $L^{\pi}$ is a true supermartingale for all $\pi \in \mathcal A$.
By definition there exists a sequence of stopping times $(\tau_n)_{n \in \mathbb N}$ converging to $T$ such that $L^{\pi}_{\cdot \wedge \tau_n}$ is a supermartingale.
Since $L^{\pi}$ is bounded below by zero we may use Fatou's Lemma to pass to the limit:
$$\mathbb E \left[ L_t^{\pi } | \mathcal F_s \right] 
= \mathbb E \left[ \lim_{n \rightarrow \infty} L^{\pi}_{t \wedge \tau_n} | \mathcal F_s \right] 
\leq \lim_{n \rightarrow \infty} \mathbb E \left[  L^{\pi}_{t \wedge \tau_n} | \mathcal F_s \right]
\leq \lim_{n \rightarrow \infty} L_{s \wedge \tau_n}^{\pi}
= L_s^{\pi}, \quad s \leq t \in  [0,T]. $$
Note that from \eqref{eqn:BSDEsolutionpower} and \eqref{optstratpower} we have
\begin{align*}
  L^{\pi^{opt}}_t &= x^{\gamma} \mathcal E \left( \gamma \int_0^t (\pi^{opt})^{\top}_s \sqrt{R_s} dQ_s + \int_0^t \operatorname{Tr}\left(2 \Sigma (A_{22}^{-1}(s) A_{21}(s))^{\top} \sqrt{R_s} dW_s \right)
+ \int_0^t \operatorname{Tr}\left( a^\top \sigma \sqrt{R_s} d\hat Q_s \right) \right).
\end{align*}
By choosing $\sigma_Q(s)= \gamma \pi^{opt}_s$, $\sigma_W(s)=2 \Sigma (A_{22}^{-1}(s) A_{21}(s))^{\top}$, $\sigma_{\hat Q}(s) \equiv a^\top \sigma $ and $\sigma_{\mu} \equiv 0$, $s \in [0,T]$, we derive from Theorem \ref{thm:martingale} that $L^{\pi^{opt}}$ is a true martingale.
\end{proof}

\begin{proof}[Proof of Theorem \ref{thmpower}.]
Note that we derive from Lemma \ref{lem:supermartpower} for all $ \pi \in \mathcal A$
\begin{align*}
\mathbb E\left[U(X_T^{x,\pi} \exp(F^{a,\sigma,o_1,o_2}))\right] 
 =\mathbb E \left[\frac{1}{\gamma} L_T^{\pi} \right] 
& \leq \mathbb E \left[ \frac{1}{\gamma} L_0^{\pi} \right] 
= \frac{1}{\gamma} x^{\gamma} \exp(Y_0). 
\end{align*}
The strategy $\pi^{opt}$ is indeed optimal since we have that $L^{\pi^{opt}}$ is a martingale and hence
$$\mathbb E\left[U(X_T^{x,\pi^{opt}}\exp(F^{a,\sigma,o_1,o_2}))\right] = \mathbb E \left[ \frac{1}{\gamma} L_0^{\pi^{opt}} \right] .$$
This immediately gives the value function.
\end{proof}

\begin{rmk}
In dimension $d=1$, for the case $F(O_T)=0$ and with a slightly different choice of parameters, this result was derived by the authors of \cite{km08}.
They represent the optimal strategy in terms of an opportunity process and use semimartingale characteristics.
In our setting the opportunity process is $e^Y$, see also \cite{n10} and in particular \cite{hmuw10} for a survey on the relationship between BSDEs and duality methods in utility maximization.
On a heuristic level the result for $d=1$ and $F(O_T)=0$ appears in \cite{l07}.
Also using duality methods \cite{dig09} derive a result similar to Theorem \ref{thmpower}.
\end{rmk}

Finally we are able to give the indifference value of change of numeraire in two examples. 
Let us first look at the special situation where 
\begin{align*}
F^{-I_d,0,o_3,0}&=
- \operatorname{Tr}(o_3) T 
 \quad \mbox{or} \quad
F^{-I_d,0,o_1,o_2}=
- \int_0^T \operatorname{Tr}(o_1 + o_2 R_s) ds, \quad o_1, \ldots, o_3 \in M_d,
\end{align*}
and understand this as the possibly stochastic discounting of the investors terminal wealth. 
The indifference value $p$ of changing between those two numeraires is then defined by 
\begin{align*}
 V^{-I_d,0,o_1,o_2}(x-p(x)) &= V^{-I_d,0,o_3,0}(x).
\end{align*}

\begin{prop}
\label{prop:powerind}
 The indifference value of changing from a fixed interest rate $F^{-I_d,0,o_3,0}$ to the floating one $F^{-I_d,0,o_1,o_2}$ is
\begin{align*}
 p(x) &= x - x \exp \l( \frac{1}{\gamma} \l( 
\operatorname{Tr}(  B_{22}^{-1}(0)  B_{21}(0) r ) + \int_0^T \operatorname{Tr}\l(B_{22}^{-1}(s)  B_{21}(s) b - o_3 \r) ds 
\right.\right. 
\\  & \hspace{2cm} 
\left.\left.
- \operatorname{Tr}(  A_{22}^{-1}(0)  A_{21}(0) r ) - \int_0^T \operatorname{Tr}\l(A_{22}^{-1}(s)  A_{21}(s) b - o_1 \r) ds \r) \r) ,
\end{align*}
with
\begin{align*}
\left( \begin{matrix}
             A_{11}(t)  & A_{12}(t) \\
  	     A_{21}(t)	& A_{22}(t)
             \end{matrix}
\right)
&
= \exp \left( (T-t) \left( \begin{matrix}
                    \frac{\gamma}{1-\gamma} \Sigma^{\top} \rho \eta^{\top} + \hat B^{\top} 
		    &  -2 \alpha -\frac{2 \gamma}{1-\gamma} \Sigma^{\top} \rho \rho^{\top} \Sigma \\
			     \frac{\gamma}{2(1- \gamma)} \eta \eta^\top - o_2	&	- \frac{\gamma}{1-\gamma} \eta \rho^{\top} \Sigma - \hat B 
                          \end{matrix} \right)
\right)
\\
\left( \begin{matrix}
             B_{11}(t)  & B_{12}(t) \\
  	     B_{21}(t)	& B_{22}(t)
             \end{matrix}
\right)
&
= \exp \left( (T-t) \left( \begin{matrix}
                            \frac{\gamma}{1-\gamma} \Sigma^{\top} \rho \eta^{\top} + \hat B^{\top} &  -2 \alpha -\frac{2 \gamma}{1-\gamma} \Sigma^{\top} \rho \rho^{\top} \Sigma \\
			     \frac{\gamma}{2(1- \gamma)} \eta \eta^\top	&	- \frac{\gamma}{1-\gamma} \eta \rho^{\top} \Sigma - \hat B 
                          \end{matrix} \right)
\right).
\end{align*}

\end{prop}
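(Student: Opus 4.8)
The plan is to read off both value functions from Theorem \ref{thmpower} and then solve the defining equation $V^{-I_d,0,o_1,o_2}(x-p(x)) = V^{-I_d,0,o_3,0}(x)$ for $p(x)$ by elementary algebra. First I would observe that both numeraire specifications are instances of $F^{a,\sigma,o_1,o_2}$ with $a=-I_d$ and $\sigma=0$, so the hypotheses of Theorem \ref{thmpower} (namely $\alpha\in S_d^{++}$ and $B(r)=r\hat B+\hat B^\top r$ with $\hat B\in M_d$) are precisely the ones assumed in the proposition, and the theorem applies directly to each of the two problems.

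Next I would specialize the matrix exponential \eqref{eqn:Apower}. For the floating numeraire $F^{-I_d,0,o_1,o_2}$, setting $\sigma=0$ makes the term $\tfrac12\sigma^\top a a^\top\sigma$ vanish and $a=-I_d$ turns $a o_2$ into $-o_2$, so the lower-left block becomes $\tfrac{\gamma}{2(1-\gamma)}\eta\eta^\top-o_2$ while the remaining three blocks are unaffected; this is exactly the matrix called $A(t)$ in the statement. For the fixed numeraire $F^{-I_d,0,o_3,0}$ one has additionally $o_2=0$, so the same block reduces to $\tfrac{\gamma}{2(1-\gamma)}\eta\eta^\top$, which is the matrix called $B(t)$. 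Thus the blocks $A_{21},A_{22},B_{21},B_{22}$ appearing in the proposition are exactly those produced by Theorem \ref{thmpower} for the two respective cases.

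Then I would substitute into the value-function formula of Theorem \ref{thmpower}, using $a o_1=-o_1$ and $a o_3=-o_3$:
\begin{align*}
V^{-I_d,0,o_1,o_2}(y) &= \frac1\gamma y^\gamma\exp\left(\operatorname{Tr}(A_{22}^{-1}(0)A_{21}(0)r)+\int_0^T\operatorname{Tr}(A_{22}^{-1}(s)A_{21}(s)b-o_1)\,ds\right),\\
V^{-I_d,0,o_3,0}(x) &= \frac1\gamma x^\gamma\exp\left(\operatorname{Tr}(B_{22}^{-1}(0)B_{21}(0)r)+\int_0^T\operatorname{Tr}(B_{22}^{-1}(s)B_{21}(s)b-o_3)\,ds\right).
\end{align*}
Writing $C_A$ and $C_B$ for the two exponents, the equation $V^{-I_d,0,o_1,o_2}(x-p(x))=V^{-I_d,0,o_3,0}(x)$ becomes $(x-p(x))^\gamma=x^\gamma e^{C_B-C_A}$ after cancelling $\tfrac1\gamma$; taking $\gamma$-th roots — legitimate since $x\ge 0$, $\gamma\in(0,1)$ and both sides are positive — gives $x-p(x)=x\,e^{(C_B-C_A)/\gamma}$, hence $p(x)=x-x\exp\big(\tfrac1\gamma(C_B-C_A)\big)$, which is the asserted formula once $C_B-C_A$ is written out in full.

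There is essentially no deep obstacle here beyond bookkeeping: the two points that need care are the correct collapse of the lower-left block of \eqref{eqn:Apower} when $\sigma=0$ and (for $B$) $o_2=0$, and the sign produced by $a=-I_d$ in the $\int_0^T\operatorname{Tr}(a o_1)\,ds$ term of the value function. Everything else is an algebraic rearrangement of the closed form delivered by Theorem \ref{thmpower}.
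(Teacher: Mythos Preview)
Your proposal is correct and follows exactly the same approach as the paper: apply Theorem \ref{thmpower} to each of the two numeraire specifications, then equate $V^{-I_d,0,o_1,o_2}(x-p(x))$ with $V^{-I_d,0,o_3,0}(x)$ and solve for $p(x)$. Your write-up is in fact more explicit than the paper's own proof, which simply records the two value functions and says ``which gives the result''; your careful tracking of how $\sigma=0$ and $a=-I_d$ collapse the lower-left block of \eqref{eqn:Apower} and produce the sign in the $\operatorname{Tr}(ao_1)$ term is exactly the bookkeeping the paper leaves implicit.
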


\begin{proof}
 By Theorem \ref{thmpower} we have that 
\begin{align*}
 V^{-I_d,0,o_1,o_2}(x-p(x)) &=
\frac{1}{\gamma} (x-p(x))^\gamma \exp\l(  \operatorname{Tr}(  A_{22}^{-1}(0)  A_{21}(0) r ) + \int_0^T \operatorname{Tr}\l(A_{22}^{-1}(s)  A_{21}(s) b -o_1 \r) ds \r)
\\
 V^{-I_d,0,o_3,0}(x) &=
\frac{1}{\gamma} x^\gamma \exp\l(  \operatorname{Tr}(  B_{22}^{-1}(0)  B_{21}(0) r ) + \int_0^T \operatorname{Tr}\l(B_{22}^{-1}(s)  B_{21}(s) b - o_3 \r) ds \r),
\end{align*}
which gives the result.
\end{proof}

In a similar way we can describe the indifference value of change of numeraire from a fixed exchange rate
\begin{align*}
 F^{I_d,0,o_3,0}&=
 \operatorname{Tr}(o_3) T,
\end{align*}
with $o_3 \in M_d$ to a random valued exchange rate
\begin{align*}
 F^{a,\sigma,o_1,o_2}&= \operatorname{Tr}(aO_T),
\end{align*}
with $a \in M_d$ and $O_T$ from \eqref{eqn:Ot}.

\begin{prop}
 The indifference value of changing from a fixed exchange rate $F^{-I_d,0,o_3,0}$ to the floating one $F^{a,\sigma,o_1,o_2}$ is
\begin{align*}
 p(x) &= x - x \exp \l( \frac{1}{\gamma} \l( \operatorname{Tr}(  B_{22}^{-1}(0)  B_{21}(0) r ) + \int_0^T \operatorname{Tr}\l(B_{22}^{-1}(s)  B_{21}(s) b + o_3 \r) ds 
\right. \right. 
\\  & \hspace{2cm} 
\left.\left.
- \operatorname{Tr}(  A_{22}^{-1}(0)  A_{21}(0) r ) - \int_0^T \operatorname{Tr}\l(A_{22}^{-1}(s)  A_{21}(s) b +ao_1 \r) ds \r) \r) ,
\end{align*}
with
\begin{align*}
\left( \begin{matrix}
             A_{11}(t)  & A_{12}(t) \\
  	     A_{21}(t)	& A_{22}(t)
             \end{matrix}
\right)
&
= \exp \left( (T-t) \left( \begin{matrix}
                    \frac{\gamma}{1-\gamma} \Sigma^{\top} \rho \eta^{\top} + \hat B^{\top} 
		    &  -2 \alpha -\frac{2 \gamma}{1-\gamma} \Sigma^{\top} \rho \rho^{\top} \Sigma \\
			    \frac12 \sigma^\top a a ^\top \sigma + \frac{\gamma}{2(1- \gamma)} \eta \eta^\top +a o_2	&	- \frac{\gamma}{1-\gamma} \eta \rho^{\top} \Sigma - \hat B 
                          \end{matrix} \right)
\right)
\\
\left( \begin{matrix}
             B_{11}(t)  & B_{12}(t) \\
  	     B_{21}(t)	& B_{22}(t)
             \end{matrix}
\right)
&
= \exp \left( (T-t) \left( \begin{matrix}
                            \frac{\gamma}{1-\gamma} \Sigma^{\top} \rho \eta^{\top} + \hat B^{\top} &  -2 \alpha -\frac{2 \gamma}{1-\gamma} \Sigma^{\top} \rho \rho^{\top} \Sigma \\
			     \frac{\gamma}{2(1- \gamma)} \eta \eta^\top	&	- \frac{\gamma}{1-\gamma} \eta \rho^{\top} \Sigma - \hat B 
                          \end{matrix} \right)
\right).
\end{align*}

\end{prop}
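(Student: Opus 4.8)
The plan is to repeat, almost verbatim, the argument used for Proposition \ref{prop:powerind}; the only new feature is that the ``floating'' numeraire now carries a genuine diffusion coefficient $\sigma$ and an arbitrary matrix $a$, rather than the special choice $a=-I_d$, $\sigma=0$ made there. As in that proof, the indifference value $p(x)$ is the quantity determined by the identity
\begin{align*}
V^{a,\sigma,o_1,o_2}(x-p(x)) &= V^{I_d,0,o_3,0}(x),
\end{align*}
and both sides are computed directly from Theorem \ref{thmpower}.

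First I would apply Theorem \ref{thmpower} to the terminal revenue $F^{a,\sigma,o_1,o_2}=\operatorname{Tr}(aO_T)$ with $O_T$ given by \eqref{eqn:Ot}. This gives
\begin{align*}
V^{a,\sigma,o_1,o_2}(x-p(x)) &= \frac{1}{\gamma}(x-p(x))^\gamma \exp\left(\operatorname{Tr}(A_{22}^{-1}(0)A_{21}(0)r) + \int_0^T \operatorname{Tr}\left(A_{22}^{-1}(s)A_{21}(s)b + a o_1\right) ds\right),
\end{align*}
where $A(t)$ is exactly the $2d\times 2d$ matrix exponential \eqref{eqn:Apower}, whose lower-left block is $\frac12\sigma^\top a a^\top\sigma + \frac{\gamma}{2(1-\gamma)}\eta\eta^\top + a o_2$---this is the matrix $A(t)$ appearing in the statement. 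Next I would apply the same theorem to the fixed exchange rate, which is the parameter choice $a=I_d$, $\sigma=0$, $o_1=o_3$, $o_2=0$ in \eqref{eqn:Ot}, so that $O_T=o_3 T$ and $F^{I_d,0,o_3,0}=\operatorname{Tr}(o_3)T$. With $\sigma=0$ and $o_2=0$ the lower-left block of the linearizing generator collapses to $\frac{\gamma}{2(1-\gamma)}\eta\eta^\top$, so the matrix exponential becomes precisely the matrix $B(t)$ of the statement, and $\operatorname{Tr}(a o_1)$ becomes $\operatorname{Tr}(o_3)$; hence
\begin{align*}
V^{I_d,0,o_3,0}(x) &= \frac{1}{\gamma}x^\gamma \exp\left(\operatorname{Tr}(B_{22}^{-1}(0)B_{21}(0)r) + \int_0^T \operatorname{Tr}\left(B_{22}^{-1}(s)B_{21}(s)b + o_3\right) ds\right).
\end{align*}

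Finally I would equate the two expressions: the factors $\frac1\gamma$ cancel, dividing by $x^\gamma$ reduces the identity to $\left(1-\frac{p(x)}{x}\right)^\gamma$ being equal to the exponential of the difference of the two exponents, and taking $\gamma$-th roots yields $x-p(x)=x\exp\!\big(\frac1\gamma(\cdots)\big)$, which is the asserted formula for $p(x)$. I do not anticipate any genuine obstacle: the result is a direct corollary of Theorem \ref{thmpower}, and the only point demanding care is the bookkeeping that matches each numeraire to the correct block of the $2d\times 2d$ linearizing matrix and to the correct integrand term---in particular the observation that switching off $\sigma$ and $o_2$ turns the $A$-system into the $B$-system.
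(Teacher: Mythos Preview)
Your proposal is correct and follows exactly the approach indicated by the paper, which simply remarks that the proof is very similar to that of Proposition \ref{prop:powerind} and omits it. You apply Theorem \ref{thmpower} to each numeraire, identify the resulting linearizing matrices with $A(t)$ and $B(t)$, and solve for $p(x)$ by equating the two value functions---precisely the intended argument.
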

Since the proof is very similar to the proof of Proposition \ref{prop:powerind} we omit it here.

\subsubsection{Exponential utility}
\label{sec:contexput}

In this section we want to solve the utility maximization problem for the exponential utility function
\begin{align*}
 U(x)=  -\exp(-\gamma x), \quad x \in \R,
\end{align*}
where $\gamma > 0$ denotes the risk aversion.
Note that we have already discussed this problem in the one-dimensional case in the beginning of Chapter \ref{chap:explsol} and now study it in a multivariate setting in detail.
We are also interested in pricing variance swaps which depend on the realized variance via utility indifference pricing.
We consider the case where the variance swaps are not available in the market and the initial capital $x$ is invested in the (incomplete) financial market $H$. 
For $i=1, \ldots,d$, a variance swap on the $i$-th asset of maturity $T$ is a contract which pays 
\begin{align*}
  \frac{1}{T} \int_0^T (R_{ii})_s ds 
\end{align*}
at terminal time $T$ in exchange for a previously fixed amount $K_{i}$.
That is to say the payoff of a variance swap on $H_i$ is a function of $O_T=\int_0^T R_s ds$, more precisely
\begin{align*}
 F^{i}(O_T)&=  \operatorname{Tr}( a^{ii} O_T ) -K_i,
\end{align*}
where $a^{ii}= \frac{1}{T} e^{ii}$. 
If we are only interested in the utility maximization problem without a random endowment, i.e. $F^{i}=0$, we define $a^{ii}=0$, $K_i=0$ for $i=0$.

In this section we also need a notion of admissibility.
For $\R^m_+ \times \R^n$-valued affine stochastic volatility models Vierthauer \cite{v10} shows in Theorem 3.17 that the optimal strategy in the exponential utility maximization problem is a deterministic function of time. 
Motivated by this we introduce the set $\mathcal A$ of admissible trading strategies as the set of $d$-dimensional deterministic functions of time $\pi=(\pi(t))_{t \in [0,T]}$.
This time, the trading strategy $\pi$ describes the amount of money invested in the stocks $H$ so that the number of shares is $\pi_j/H_j$ for $j= 1, \ldots, d$.
The wealth process $X^{x,\pi}$ corresponding to strategy $\pi$ and initial capital $x$ is then given by
\begin{align*}
 X_t^{x,\pi} 
&= x + \sum_{i=1}^d \int_0^t \frac{\pi_{i}(s)}{H_{i,s}} dH_{i,s} 
= x + \int_0^t \pi^{\top}(s) R_s \eta ds + \int_0^t \pi^{\top}(s) \sqrt{R_s} dQ_s.
\end{align*}

\begin{rmk}
Note that we measure the trading strategies $\pi$ in different units than in the power utility case. 
This then leads to a similar exponential structure in the process $L$.
This was also deployed in \cite{HuImkellerMuller} for example.
\end{rmk}

In the following Theorem we characterize the maximal expected utility from trading in the financial market in presence of a variance swap on the $i$-th asset
\begin{align}
\label{maxprobexp} 
V^{F^{i}}(x) = & \sup_{\pi \in \mathcal A} \E \left[-\exp \left(-\gamma \left(X_T^{x,\pi}+F^{i}(O_T)\right) \right) \right], \quad x \in \R,
\end{align}
and the optimal strategy $\pi^{F^{i}} $ for $i=0,1,\ldots,d$.

\begin{thm}
\label{thmexp}
For $i \in \{0,1, \ldots, d\}$ let $\Gamma^i$ be the solution of the ODE
\begin{align}
\label{odeexp}
-\frac{\partial \Gamma^i(t)}{\partial t} &=
 \Gamma^i(t) \left(-2\gamma \alpha + 2\gamma \Sigma^{\top} \rho \rho^{\top} \Sigma \right) \Gamma^i(t)  + B^*(\Gamma^i(t))
\\ \nonumber
& \quad  - \frac{1}{\gamma}\Gamma^i(t)\Sigma^{\top}\rho\eta^{\top} - \frac{1}{\gamma}\eta \rho^{\top} \Sigma \Gamma^i(t) +  \frac{1}{2\gamma^3}  \eta \eta^{\top}
+a^{ii},
\\
\Gamma^i(T)&=0 , \nonumber
\end{align}
for all $t \in [0,T]$.
Then the value function has the form
\begin{align*}
 V^{F^i}(x) 
&
=-\exp\left(- \gamma \left( x -K_i + \operatorname{Tr}(\Gamma^i(0)r) + \int_0^T \operatorname{Tr}(\Gamma^i(s)b) ds \right) \right), \quad x \in \R,
\end{align*}
and the optimal strategy $\pi^{F^i}$ is given by 
\begin{align}
\label{eqn:stratexp}
  \pi^{F^i}(t) = \frac{1}{\gamma^2} \eta - 2 \Gamma^i(t) \Sigma^{\top} \rho ,
\end{align}
for all $t \in [0,T]$.
\end{thm}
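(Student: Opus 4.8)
The plan is to prove Theorem~\ref{thmexp} by the martingale optimality principle, along the same lines as Lemma~\ref{lem:supermartpower} and Theorem~\ref{thmpower} but with the exponential utility. Fix $i\in\{0,1,\dots,d\}$. I would introduce the family of processes
\begin{align*}
 L^\pi_t=-\exp\!\big(-\gamma(X^{x,\pi}_t+Y_t)\big),\qquad t\in[0,T],\ \pi\in\mathcal A,
\end{align*}
where $(Y,Z,\hat Z)$ solves a BSDE of the type \eqref{defBSDE} with vanishing jump part (the model \eqref{risk} is continuous, $m=\mu=0$), terminal condition $F(R_T,O_T)=F^i(O_T)=\operatorname{Tr}(a^{ii}O_T)-K_i$, which is affine in the sense of \eqref{eqn:terminalcondition} with $u=0$, $a=a^{ii}$, $v=-K_i$, and a generator $f$ of the admissible form \eqref{matrixgenerator} whose time-independent coefficients are chosen so that the associated matrix Riccati ODE \eqref{eqn:matrixode}, specialised to this setting and to terminal value $u=0$, becomes exactly \eqref{odeexp}. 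In particular the $z$-quadratic coefficient is $c_{zz}\equiv\frac\gamma2(\rho\rho^\top-I_d)\in S_d^-$ and the constant term $\mathscr C$ of \eqref{eqn:matrixodetheta} equals $c_x+a^{ii}\in S_d^+$ (here $c_x$ is a positive semidefinite multiple of $\eta\eta^\top$ and $a^{ii}\in S_d^+$ for every $i\in\{0,\dots,d\}$). Exactly as in the power case, this $f$ is characterised by the property that the finite-variation rate of $dL^\pi_t/L^\pi_t$, after completing the square in $\pi$, equals its pointwise maximum over $\pi$; I would record the consequence---$L^\pi$ is a supermartingale for every $\pi\in\mathcal A$, and $L^{\pi^{F^i}}$ a martingale---as a separate lemma, the exact analogue of Lemma~\ref{lem:supermartpower}.

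Next I would solve the BSDE explicitly. Since $c_{zz}\in S_d^-$, $\mathscr C\in S_d^+$ and the (constant) coefficients are real analytic, assumption {\rm (A2$^+$)} holds, so Proposition~\ref{christaeberhardriccati} with terminal value $u=0\in S_d^+$ yields a unique $S_d^+$-valued solution $\Gamma^i:=\Gamma(\cdot,0)$ of \eqref{odeexp} on all of $[0,T]$, and the linear equation \eqref{eqn:odew} then determines $w$. Theorem~\ref{thm:matrixBSDEsolution} gives the explicit solution $Y_t=\operatorname{Tr}(\Gamma^i(t)R_t)+\operatorname{Tr}(a^{ii}O_t)+w(t)$ with $w(t)=-K_i+\int_t^T\operatorname{Tr}(\Gamma^i(s)b)\,ds$, together with $Z_t=2\sqrt{R_t}\,\Gamma^i(t)\Sigma^\top$ and $\hat Z_t=\sqrt{R_t}\,\sigma^\top a^{ii}$ (which vanishes here, since $\sigma=0$ for the process $O$ of \eqref{eqn:Ot} with $O_T=\int_0^TR_s\,ds$). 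Then $L^\pi_T=U(X^{x,\pi}_T+F^i(O_T))$, and, because $O_0=0$ and $R_0=r$, the quantity $L^\pi_0=-\exp(-\gamma(x-K_i+\operatorname{Tr}(\Gamma^i(0)r)+\int_0^T\operatorname{Tr}(\Gamma^i(s)b)\,ds))$ is independent of $\pi$ and coincides with the value function claimed in the theorem.

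I would then verify the (super)martingale statements. Applying It\^o's formula to $L^\pi$ and inserting the dynamics of $X^{x,\pi}$, the BSDE for $Y$, the dynamics \eqref{risk} of $R$ and the decomposition $dQ=dW\rho+\sqrt{1-\rho^\top\rho}\,dD$, the drift of $dL^\pi_t/L^\pi_t$ reduces---after completing the square and using the explicit forms of $f$, $Z$ and $\hat Z$---to a quadratic expression in $\pi(t)$ that is $\le 0$ for every $\pi\in\mathcal A$ and vanishes precisely when $\pi(t)=\frac1{\gamma^2}\eta-2\Gamma^i(t)\Sigma^\top\rho$, i.e.~for $\pi=\pi^{F^i}$ as in \eqref{eqn:stratexp}. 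Hence $L^\pi$ is a local supermartingale and $L^{\pi^{F^i}}$ a local martingale. For $\pi^{F^i}$ one writes $L^{\pi^{F^i}}=L^{\pi^{F^i}}_0\,\mathcal E(P)$ with $P$ of the form \eqref{eqn:P}, reading off $\sigma_Q,\sigma_W,\sigma_{\hat Q}$ from the martingale part of $dL^{\pi^{F^i}}/L^{\pi^{F^i}}$ and taking $\sigma_\mu\equiv 0$; since $m=0$ the integrability condition \eqref{eqn:ehoch} is vacuous, so Theorem~\ref{thm:martingale} shows that $\mathcal E(P)$, and hence $L^{\pi^{F^i}}$, is a true martingale. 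The delicate step is upgrading $L^\pi$ itself from a local to a genuine supermartingale for every $\pi\in\mathcal A$: as $L^\pi<0$, the Fatou argument from the power case does not apply, so instead I would use that $\pi$ is a deterministic function of time and $R$ an affine process to control the exponential moments of $X^{x,\pi}_\tau$ uniformly over stopping times $\tau\le T$---via the affine transform formula and the non-explosion of the relevant generalized Riccati ODE---and deduce uniform integrability of $\{L^\pi_\tau\}$. I expect this uniform-integrability estimate to be the main obstacle.

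Finally, the supermartingale property yields $\E[U(X^{x,\pi}_T+F^i(O_T))]=\E[L^\pi_T]\le L^\pi_0=L^{\pi^{F^i}}_0$ for every $\pi\in\mathcal A$, with equality for $\pi^{F^i}$ because $L^{\pi^{F^i}}$ is a martingale. Thus $V^{F^i}(x)=L^{\pi^{F^i}}_0$, which is the stated value function, and $\pi^{F^i}$ from \eqref{eqn:stratexp} is optimal.
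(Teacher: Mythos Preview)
Your overall strategy---martingale optimality principle, the BSDE with generator of the form \eqref{matrixgenerator} and coefficients $c_{zz}=\frac\gamma2(\rho\rho^\top-I_d)\in S_d^-$, $c_{z\sqrt x}=-\frac1{2\gamma}\rho\eta^\top$, $c_x=\frac1{2\gamma^3}\eta\eta^\top$, solving the Riccati ODE via Proposition~\ref{christaeberhardriccati}, and reading off $V^{F^i}(x)=L^\pi_0$---is exactly the paper's route, and is carried out in the paper as Lemma~\ref{lemsupermartexp}. Your identification of the obstacle (Fatou's lemma fails because $L^\pi<0$) is also correct.

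The one genuine difference is in how the supermartingale property of $L^{\pi,i}$ is established for \emph{every} $\pi\in\mathcal A$. You propose to control exponential moments of $X^{x,\pi}_\tau+Y_\tau$ uniformly over stopping times $\tau\le T$ and deduce that $\{L^\pi_\tau\}$ is of class~(D). This is plausible but unnecessary work. The paper instead writes $L^{\pi,i}=M^{\pi,i}A^{\pi,i}$, where $M^{\pi,i}=-L_0^{\pi,i}\,\mathcal E\big(-\gamma\int_0^\cdot\pi^\top(s)\sqrt{R_s}\,dQ_s-\gamma\int_0^\cdot\operatorname{Tr}((Z_s^i)^\top\,dW_s)\big)$ and $A^{\pi,i}$ is the (negative, non-increasing) finite-variation factor. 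The key observation you are missing is that Theorem~\ref{thm:martingale} applies not only to the optimal $\pi^{F^i}$ but to \emph{every} $\pi\in\mathcal A$, because admissible strategies here are deterministic functions of time: the process $P$ in \eqref{eqn:P} can be taken with $\sigma_Q(s)=-\gamma\pi(s)$, $\sigma_W(s)=-2\gamma\Sigma\Gamma^i(s)^\top$, $\sigma_{\hat Q}\equiv\sigma_\mu\equiv0$, all continuous deterministic, so $M^{\pi,i}$ is a \emph{true} martingale for every $\pi\in\mathcal A$. Then $M^{\pi,i}>0$, $A^{\pi,i}$ is $\mathcal F$-adapted and non-increasing, and for $s\le t$ one has $\E[M_t^{\pi,i}A_t^{\pi,i}\mid\mathcal F_s]\le\E[M_t^{\pi,i}A_s^{\pi,i}\mid\mathcal F_s]=A_s^{\pi,i}M_s^{\pi,i}$, giving the supermartingale property directly without any uniform-integrability estimate. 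This is precisely why the paper restricts $\mathcal A$ to deterministic strategies in the exponential case.
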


As before we want to use the martingale optimality principle in order to establish this Theorem.
Therefore we dynamize the problem.
For all $i=0,1, \ldots ,d,$ we define
\begin{align*}
  L^{\pi,i}_t = -\exp( -\gamma (X_t^{x,\pi} + Y^i_t)), \quad t \in [0,T], \quad \pi \in \mathcal A,
\end{align*}
where $(Y^i,Z^i)$ is the solution to
\begin{align}
\label{BSDEexp}
  Y^i_t = F^i(O_T) - \int_t^T \operatorname{Tr}((Z^i_s)^{\top} dW_s)  + \int_t^T f(R_s,Z^i_s) ds, \quad t \in [0,T].
\end{align}
The generator $f$ needs to be selected in a way such that $L^{\pi,i}$ possesses the desired properties.
This is done in the following lemma.

\begin{lem}
\label{lemsupermartexp}
For $i \in \{0,1, \ldots, d\}$, let the generator $f: S_d^+ \times M_d \to \R$ have the form
\begin{align}
\label{generatorexp}
 f(r,z^i) = - \frac{\gamma}{2} \operatorname{Tr}(z^i(z^i)^{\top}) + \frac{1}{2 \gamma} \Big| \frac{1}{\gamma}\sqrt{r} \eta - \gamma z^i \rho\Big|^2.
\end{align}
Then the solution to \eqref{BSDEexp} is given by
\begin{align}
\label{eqn:BSDEexpsol}
  Y^i_t &=  \operatorname{Tr}(\Gamma^i(t)R_t) +\operatorname{Tr} (a^{ii} O_t) -K_i + \int_t^T \operatorname{Tr}(\Gamma^i(t)b)ds,
\\ \nonumber 
  Z^i_t&=2 \sqrt{R_t} \Gamma^i(t) \Sigma^{\top},
\end{align} 
where $\Gamma^i \in S_d^+$ is the solution to \eqref{odeexp} and $O_t = \int_0^t R_s ds$.
Furthermore $L^{\pi,i}$ is a supermartingale for every strategy $\pi \in \mathcal A$ and
for
\begin{align}
\label{optstratexp}
\pi^{F^i}(t) = \frac{1}{\gamma^2} \eta - 2 \Gamma^i(t) \Sigma^{\top} \rho  ,
\end{align}
the process $L^{\pi^{F^i},i}$ is a martingale.
\end{lem}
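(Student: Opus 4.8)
The plan is to follow the pattern of the proof of Lemma~\ref{lem:supermartpower}: first recognise \eqref{BSDEexp} as a special instance of the quadratic FBSDE treated in Theorem~\ref{thm:matrixBSDEsolution} and read off its solution, and then verify the stated (super)martingale properties of $L^{\pi,i}$ by an It\^o computation together with a completion-of-squares identity.

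For the first step I would expand the generator \eqref{generatorexp}. Using $\eta^\top r\eta=\operatorname{Tr}(r\eta\eta^\top)$, $\eta^\top\sqrt r\,z\rho=\operatorname{Tr}(z\rho\eta^\top\sqrt r)$ and $\rho^\top z^\top z\rho=\operatorname{Tr}(z\rho\rho^\top z^\top)$, one sees that $f$ is precisely of the form \eqref{matrixgenerator} with the constant coefficients $c_{zz}=-\tfrac{\gamma}{2}I_d+\tfrac{\gamma}{2}\rho\rho^\top$, $c_{z\sqrt x}=-\tfrac{1}{\gamma}\rho\eta^\top$, $c_x=\tfrac{1}{2\gamma^3}\eta\eta^\top$ and all remaining coefficients (the $y$-, $\hat z$- and all $g$-terms) equal to zero; moreover $F^i(O_T)=\operatorname{Tr}(a^{ii}O_T)-K_i$ is the affine terminal functional \eqref{eqn:terminalcondition} with $u=0$, $a=a^{ii}$, $v=-K_i$, and $O_t=\int_0^t R_s\,ds$ has the form \eqref{eqn:hatN} with $\sigma\equiv0$, $o_1\equiv0$, $o_2\equiv I_d$, so there is no $\hat Q$-driver and no jump part (the integrability condition \eqref{eqn:integrability1} being vacuous since $m=\mu=0$). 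Since $\rho^\top\rho\le1$ we have $c_{zz}\in S_d^-$, and $\mathscr C=c_x+ao_2=\tfrac{1}{2\gamma^3}\eta\eta^\top+a^{ii}\in S_d^+$; hence {\rm (A2$^+$)} holds and, the (constant, hence real analytic) coefficient map satisfying the hypotheses, Proposition~\ref{christaeberhardriccati} provides the unique solution $\Gamma^i(\cdot,0)\in S_d^+$. Substituting the above coefficients (and taking, as usual when $\alpha\in S_d^{++}$, $\Sigma$ symmetric with $\Sigma^2=\alpha$) turns the Riccati equation \eqref{eqn:matrixode} into exactly \eqref{odeexp}, while $\mathscr L=c_{z\sqrt x}^\top\Sigma=-\tfrac{1}{\gamma}\eta\rho^\top\Sigma$ and \eqref{eqn:odew} collapses to $-\partial_t w=\operatorname{Tr}(\Gamma^i(t)b)$, $w(T)=-K_i$. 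Theorem~\ref{thm:matrixBSDEsolution} then yields that \eqref{BSDEexp} is solved by \eqref{eqn:BSDEexpsol} (with $\hat Z\equiv0$, $K\equiv0$).

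For the second step I would apply It\^o's formula to $L^{\pi,i}_t=-\exp(-\gamma(X^{x,\pi}_t+Y^i_t))$, using $dX^{x,\pi}_t=\pi^\top(t)R_t\eta\,dt+\pi^\top(t)\sqrt{R_t}\,dQ_t$, the backward dynamics $dY^i_t=\operatorname{Tr}((Z^i_t)^\top dW_t)-f(R_t,Z^i_t)\,dt$, and $dQ_t=dW_t\rho+\sqrt{1-\rho^\top\rho}\,dD_t$ with $D\perp W$. Since $\pi^\top\sqrt R\,dW\rho=\operatorname{Tr}\bigl((\sqrt R\pi\rho^\top)^\top dW\bigr)$, the continuous local-martingale part of $X^{x,\pi}+Y^i$ is $\operatorname{Tr}\bigl((\sqrt{R}\pi\rho^\top+Z^i)^\top dW\bigr)+\sqrt{1-\rho^\top\rho}\,\pi^\top\sqrt R\,dD$, with quadratic variation $\bigl(\|\sqrt{R}\pi\rho^\top+Z^i\|^2+(1-\rho^\top\rho)\pi^\top R\pi\bigr)\,dt$, so the drift of $L^{\pi,i}$ equals $L^{\pi,i}_t$ times
\[
 -\gamma\pi^\top(t)R_t\eta+\gamma f(R_t,Z^i_t)+\tfrac{\gamma^2}{2}\Bigl(\|\sqrt{R_t}\pi(t)\rho^\top+Z^i_t\|^2+(1-\rho^\top\rho)\pi^\top(t)R_t\pi(t)\Bigr).
\]
Expanding the squares, inserting $Z^i_t=2\sqrt{R_t}\Gamma^i(t)\Sigma^\top$ and completing the square in $\sqrt{R_t}\pi(t)$, a short computation using the explicit form \eqref{generatorexp} of $f$ shows that this bracket equals $\tfrac{\gamma^2}{2}\,|\sqrt{R_t}(\pi(t)-\pi^{F^i}(t))|^2\ge0$, vanishing ($dt\otimes\P$-a.e.) exactly when $\pi=\pi^{F^i}$. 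Since $L^{\pi,i}_t<0$, it follows that $L^{\pi,i}$ is a local supermartingale for every $\pi\in\mathcal A$ and a local martingale for $\pi^{F^i}$; note $\pi^{F^i}\in\mathcal A$ because $\Gamma^i$ is continuous.

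Finally I would upgrade these to genuine (super)martingales as follows. From the above, $-L^{\pi,i}_t=(-L^{\pi,i}_0)\,\mathcal E(P^{\pi})_t\,e^{A^{\pi}_t}$, where $P^{\pi}=-\gamma\int_0^\cdot\pi^\top\sqrt R\,dQ-\gamma\int_0^\cdot\operatorname{Tr}((Z^i)^\top dW)$ is $-\gamma$ times the continuous martingale part just computed, of the form \eqref{eqn:P} with $\sigma_Q(\cdot)=-\gamma\pi(\cdot)$, $\sigma_W(\cdot)=-2\gamma\Sigma\Gamma^i(\cdot)$, $\sigma_{\hat Q}\equiv\sigma_{\mu}\equiv0$ (continuous, and \eqref{eqn:ehoch} vacuous since $m=0$), and $A^{\pi}_t=\int_0^t[\,\cdot\,]_s\,ds\ge0$ is nondecreasing with $A^{\pi}_0=0$. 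By Theorem~\ref{thm:martingale} each $\mathcal E(P^{\pi})$ is a true martingale; combined with the monotonicity of $e^{A^{\pi}}$ and $-L^{\pi,i}_0>0$ this gives $\E[-L^{\pi,i}_t\mid\mathcal F_s]\ge(-L^{\pi,i}_0)e^{A^{\pi}_s}\E[\mathcal E(P^{\pi})_t\mid\mathcal F_s]=-L^{\pi,i}_s$, i.e.\ $L^{\pi,i}$ is a supermartingale; and for $\pi=\pi^{F^i}$ the bracket vanishes, so $A^{\pi^{F^i}}\equiv0$ and $L^{\pi^{F^i},i}$ is a constant multiple of a true martingale, hence a true martingale. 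The step I expect to be the main obstacle is the completion-of-squares identity in the second part: carrying out the It\^o expansion with the two correlated drivers and then checking that, once the explicit $f$ and $Z^i=2\sqrt R\Gamma^i\Sigma^\top$ are inserted, the drift collapses to the perfect square $\tfrac{\gamma^2}{2}|\sqrt R(\pi-\pi^{F^i})|^2$; the reduction to Theorem~\ref{thm:matrixBSDEsolution} and the martingale upgrades are then routine.
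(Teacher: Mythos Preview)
Your proposal is correct and follows essentially the same route as the paper's own proof: identify the generator \eqref{generatorexp} as a special case of \eqref{matrixgenerator}, invoke Proposition~\ref{christaeberhardriccati} via {\rm(A2$^+$)} to get $\Gamma^i\in S_d^+$, apply Theorem~\ref{thm:matrixBSDEsolution} for the explicit BSDE solution, and then carry out the It\^o/completion-of-squares argument, upgrading to a true (super)martingale through Theorem~\ref{thm:martingale}. The only cosmetic differences are that the paper packages the It\^o step as a product decomposition $L^{\pi,i}=M^{\pi,i}A^{\pi,i}$ (your $\mathcal E(P^\pi)e^{A^\pi}$ is the same thing), lumps $a^{ii}$ into $c_x$ rather than into $ao_2$, and gives a slightly more explicit argument that $c_{zz}$ is in fact strictly negative definite---none of which changes the logic.
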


\begin{proof}
Fix $i \in \{0,1, \ldots, d\}$ and define
\begin{align*}
  c_{zz}= \frac{\gamma}{2} ( \rho \rho^{\top} - I_d), \quad
  c_{z \sqrt x}= - \frac{1}{2\gamma} \rho \eta^{\top} , \quad
  c_x= \frac{1}{2 \gamma^3} \eta \eta^{\top} + a^{ii}. \quad
\end{align*}
Note that $c_{zz}$ is negative definite.
Indeed, if $\rho=0$, we have $c_{zz}= -\frac{\gamma}{2} I_d \in S_d^{--}$.
If $\rho \neq 0$, we know that $I_d - \rho \rho^{\top}$ is the inverse of the positive definite matrix $I_d + \frac{1}{1-\rho^{\top} \rho} \rho \rho^{\top}$ and hence is itself positive definite.
The conclusion is that $c_{zz} \in S_d^{--}$.
Then by Proposition \ref{christaeberhardriccati} there exists a unique solution $\Gamma^i \in S_d^+$. 
This allows us to find solution \eqref{eqn:BSDEexpsol} via Theorem \ref{thm:matrixBSDEsolution}.

Fix $\pi \in \mathcal A$.
By It\^o's formula we see that $L^{\pi,i}$ can be described by the product of the local martingale
\begin{align*}
  M_t^{\pi,i}= - L_0^{\pi,i} \mathcal E \left( -\gamma \left( \int_0^t \pi^{\top}(s) \sqrt{R_s} dQ_s - \int_0^t \operatorname{Tr}((Z^i_s)^{\top} dW_s)\right)\right),
\end{align*}
and the bounded variation process
\begin{align*}
 A_t^{\pi,i}
&= - \exp \left( \int_0^t \left( -\gamma \pi^{\top}(s) R_s \eta + \gamma f(R_s,Z^i_s) + \frac12 \gamma^2 |\sqrt{R_s} \pi(s) \rho^{\top} + (Z^i_s)^{\top}|^2 
\right. \right.
\\
& \hspace{2cm} \left. \left. + \frac12 \gamma^2 |\sqrt{1- \rho^{\top} \rho} \pi^{\top}(s)\sqrt{R_s} |^2   \right) ds \right).
\end{align*}
Theorem \ref{thm:martingale} implies that $M^{\pi,i}$ is a true martingale.
The process $A^{\pi,i}$ is non-increasing, if  
\begin{align*}
  & -\gamma \pi^{\top}(s) R_s \eta + \gamma f(R_s,Z^i_s) + \frac12 \gamma^2 |\sqrt{R_s} \pi(s) \rho^{\top} + (Z^i_s)^{\top}|^2 
+ \frac12 \gamma^2 |\sqrt{1- \rho^{\top} \rho} \pi^{\top}(s)\sqrt{R_s} |^2 
\geq 0.
\end{align*}
for all $s \in [0,T]$.
This is equivalent to
\begin{align}
\label{eqn:help2}
 -f(R_t,Z^i_t)
&\leq  
 \frac{\gamma}{2} \operatorname{Tr}(Z^i_t (Z^i_t)^{\top}) + \gamma \operatorname{Tr}( \sqrt{R_t} \pi(t) \rho^{\top} (Z^i_t)^{\top}) - \frac{1}{\gamma} \pi(t)^{\top} R_t \eta + \frac{\gamma}{2} |\pi^{\top} \sqrt{R_t}|^2
\\ \nonumber
&=
\frac{\gamma}{2} \left|\sqrt{R_t} \pi(t) - \left(\frac{1}{\gamma^2 }\sqrt{R_t} \eta - Z^i_t \rho\right)\right|^2 - \frac{1}{2\gamma} \left(\frac{1}{\gamma} \sqrt{R_t}\eta - \gamma Z^i_t \rho\right)^2  
+ \frac{\gamma}{2} \operatorname{Tr}(Z^i_t (Z^i_t)^{\top}),
\end{align}
which holds true by formula \eqref{generatorexp}.
Hence $A^{\pi,i}$ is non-increasing and $L^{\pi,i}=M^{\pi,i}A^{\pi,i}$ is a supermartingale.
From \eqref{eqn:help2} we see in particular that $A^{\pi^{F^i},i}=-1$ is constant and thus $L^{\pi^{F^i},i}=-M^{\pi^{F^i},i}$ is a true martingale.
\end{proof}

\begin{proof}[Proof of Theorem \ref{thmexp}]
 Follows by the same reasoning as for Theorem \ref{thmpower}.
\end{proof}

For all $i \in \{1, \ldots,d\}$, the indifference price of the variance swap 
$$F^i(O_T)=  \operatorname{Tr}\left( a^{ii} O_T \right) -K_{i}$$ 
on the $i$-th asset is defined as the value $p^i$ for which the investor is indifferent between buying $F^i$ for the amount $p^i$ and receiving a random income $F^i$ at terminal time $T$ or not having it, i.e. 
\begin{align*}
 V^{F^i}(x-p^i) = V^0(x),
\end{align*}
for all $x \in \R$.
The optimal strategy $\pi^{F^i}$ which attains the maximal expected utility in the presence of $F^i$ can be decomposed into a sum of a pure investment part $\pi^0$ and a hedging component $\Delta^i$, i.e.
\begin{align}
\label{eqn:opthedge}
  \pi^{F^i}(t)=\pi^0(t) + \Delta^i(t), \quad t \in [0,T].
\end{align}
We therefore call $\Delta^i$ the optimal hedge.

\begin{prop}
For $i \in \{1, \ldots, d \}$ the indifference price $p^i$ and the optimal hedge $\Delta^i$ of $F^i(O_T)$ are explicitly given by
 \begin{align*}
  p^i&= -K_{i} + \operatorname{Tr}((\Gamma^{i}(0)-\Gamma^0(0))r) + \int_0^T \operatorname{Tr}((\Gamma^{i}(s)-\Gamma^0(s))b) ds,
\\
\Delta^i(t) &= 2(\Gamma^{i}(t)-\Gamma^0(t)) \Sigma^{\top} \rho, \quad t \in [0,T],
 \end{align*}
where $\Gamma^{i}$ and $\Gamma^0$ are the solutions of \eqref{odeexp}. 

\end{prop}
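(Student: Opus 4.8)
The statement is a direct corollary of Theorem \ref{thmexp}, so the plan is essentially bookkeeping. First I would check that the hypotheses of that theorem are in force for every $i \in \{0,1,\ldots,d\}$: the standing assumptions of this subsection ($\alpha \in S_d^+$, the drift structure $B(r)=r\hat B+\hat B^\top r$) together with the observation made in the proof of Lemma \ref{lemsupermartexp} that $c_{zz}=\frac{\gamma}{2}(\rho\rho^\top-I_d)\in S_d^{--}$ imply, via Proposition \ref{christaeberhardriccati}, that the generalized Riccati ODE \eqref{odeexp} admits a unique solution $\Gamma^i\in S_d^+$ on all of $[0,T]$ for each $i$. Consequently Theorem \ref{thmexp} is applicable for each such $i$, and both $V^{F^i}$ and $V^0$ are given by the explicit formulas stated there; for $i=0$ one has $K_0=0$ and $a^{00}=0$, so that $\Gamma^0$ solves \eqref{odeexp} with the sole inhomogeneous term $\frac{1}{2\gamma^3}\eta\eta^\top$.

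For the indifference price I would simply insert the value-function formula of Theorem \ref{thmexp} into the defining relation $V^{F^i}(x-p^i)=V^0(x)$. Since $U(y)=-\exp(-\gamma y)$ is strictly increasing, this is equivalent to equating the two arguments of the exponential,
\begin{align*}
x-p^i-K_i+\operatorname{Tr}(\Gamma^i(0)r)+\int_0^T\operatorname{Tr}(\Gamma^i(s)b)\,ds=x+\operatorname{Tr}(\Gamma^0(0)r)+\int_0^T\operatorname{Tr}(\Gamma^0(s)b)\,ds .
\end{align*}
The initial wealth $x$ cancels, and solving for $p^i$ produces exactly the claimed expression; it is worth noting in passing that $p^i$ is independent of $x$, as an exponential-utility indifference price must be.

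For the optimal hedge I would use that, by the defining decomposition \eqref{eqn:opthedge}, $\Delta^i(t)=\pi^{F^i}(t)-\pi^0(t)$. Substituting the optimal strategy \eqref{eqn:stratexp} for the index $i$ and for the index $0$, the pure-investment drift $\frac{1}{\gamma^2}\eta$ cancels and what remains is the term $2\bigl(\Gamma^i(t)-\Gamma^0(t)\bigr)\Sigma^\top\rho$, which is the asserted formula. There is no genuine obstacle here: once the global well-posedness of \eqref{odeexp} for each $i$ is secured (as above), the whole proposition reduces to an algebraic manipulation of the output of Theorem \ref{thmexp}, and the only point deserving explicit mention is the verification of the theorem's hypotheses for every $i$, for which the negative definiteness of $c_{zz}$ and Proposition \ref{christaeberhardriccati} suffice.
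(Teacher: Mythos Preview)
Your proposal is correct and follows essentially the same argument as the paper: recall the explicit value functions from Theorem \ref{thmexp}, equate $V^{F^i}(x-p^i)=V^0(x)$ to read off $p^i$, and subtract the two optimal strategies \eqref{eqn:stratexp} to obtain $\Delta^i$. You add a bit more justification on the well-posedness of \eqref{odeexp} (via Proposition \ref{christaeberhardriccati}) than the paper, which simply invokes Theorem \ref{thmexp} directly; note, however, that the special drift form $B(r)=r\hat B+\hat B^\top r$ is not actually needed here.
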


\begin{proof}
Fix $i \in \{1, \ldots, d\}$ and recall the value functions 
\begin{align*}
 V^{F^i}(x-p^i) &= - \exp \left( -\gamma \left(x-p^i-K_{i} +\operatorname{Tr}(\Gamma^{i}(0)r) + \int_0^T \operatorname{Tr}(\Gamma^i(s) b) ds \right)\right), \\
V^0(x) & = - \exp \left( -\gamma \left(x+\operatorname{Tr}(\Gamma^0(t)r) + \int_0^T \operatorname{Tr}(\Gamma^0(s) b) ds \right)\right),
\end{align*}
from Theorem \ref{thmexp}.
Equating them immediately gives the first part of the result.
The second part then follows from \eqref{eqn:stratexp} and \eqref{eqn:opthedge}.
\end{proof}

\subsection{Solution in a multivariate affine stochastic volatility model with jumps}
\label{sec:BNS}

We now consider a model with jumps which is a natural multivariate extension of the model of \cite{bns01} and has been applied e.g. in optimal portfolio selection, see \cite{bkr03} and the references therein.
As before the asset price process $H$ is modeled as stochastic exponential $H=H_0 \mathcal E (N)$ with 
\begin{align}
\label{eqn:defN}
  dN_t&=  R_{t} \eta dt + \sqrt{R_{t}} dQ_t, \quad t \in [0,T],
\end{align}
where $Q$ is a $d$-dimensional vector Brownian motion and $\eta$ a constant parameter.
By $R$ we denote the Ornstein-Uhlenbeck-type stochastic process with dynamics
\begin{align}
\label{eqn:RBNS}
 dR_t & =  (\lambda + \lambda( R_{t})) dt + d J_t, 
\end{align}
and a starting value $R_0=r$.
Here $\lambda \in S^+_d$ and $\Lambda: S_d \to S_d$ is the linear map $\Lambda(r)=\sum_{i,j} \beta^{ij}r_{ij}$ with $\beta^{ij}=\beta^{ji} \in S_d$ and such that $\operatorname{Tr}(\Lambda(r)x) \geq 0$ for all $r,x \in S_d^+$ with $\operatorname{Tr}(rx)=0$.
We denote its adjoint operator by $\Lambda^*$.
The process $J$ is an independent affine 
process with admissible parameter set $(0,b^J,0,m^J,0)$, starting at $0$.
Our goal is again to maximize the expected terminal wealth from trading in the market. 

\begin{prop}
The process $(R,N)$ is a multivariate stochastic volatility model with functions $\Phi$ and $\Psi$ solving
\begin{align}
\label{eqn:affwjumps}
  \frac{\partial \Phi(t,u,v)}{\partial t}
&=(\lambda + b^J)  \Psi(t,u,v) - \int_{S_d^+ \setminus \{0\}} (e^{\operatorname{Tr}(\xi \Psi(t,u,v))}-1) m^J(d\xi),
\quad \Phi(0,u,v)=0,
\\ \nonumber
  \frac{\partial \Psi(t,u,v)}{\partial t}
  &=
\Lambda^*(\Psi(t,u,v)) + \frac12 v v^{\top},
\quad \Psi(0,u,v)=u,
\end{align}
for all $(t,u,v) \in \mathcal Q$.
\end{prop}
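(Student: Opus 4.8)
The plan is to follow the proof of Proposition~\ref{prop:contstochvol} step by step, the one genuinely new feature being the jump part of $R$. First, stochastic continuity of the Markov process $(R,N)$ holds by construction. Observe next that $R$ itself is an affine process on $S_d^+$: since the jumps of the independent process $J$ do not depend on $R$ and the drift $\lambda+\Lambda(R)$ is affine in $R$, the process $R$ has admissible parameter set $(0,\lambda+b^J,\beta^{ij},m^J,0)$. I would then apply Theorem~\ref{thm:martingale} with $\sigma_W\equiv\sigma_{\hat Q}\equiv\sigma_\mu\equiv0$ and $\sigma_Q\equiv-\eta$; condition \eqref{eqn:ehoch} is vacuous in this case, so $\mathcal E(X)$ with $X_t=-\int_0^t\eta^{\top}\sqrt{R_s}\,dQ_s$ is a true martingale, and I set $d\mathbb Q=\mathcal E(X)_T\,d\mathbb P$. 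Because $X$ is continuous while $J$ is purely discontinuous and independent of $Q$, one has $\langle X,J\rangle=0=\langle X,\Lambda(R)\rangle$, so the $\mathbb Q$-dynamics of $R$ are still $dR_t=(\lambda+\Lambda(R_t))\,dt+dJ_t$ and the $\mathbb Q$-compensator of the jumps of $R$ remains $m^J(d\xi)\,ds$. By Girsanov's and L\'evy's theorems $\tilde Q=Q+\int_0^{\cdot}\sqrt{R_s}\eta\,ds$ is a $\mathbb Q$-Brownian motion, hence $N=\int_0^{\cdot}\sqrt{R_s}\,d\tilde Q_s$ is a local $\mathbb Q$-martingale and, by \cite{k04} Lemma~3.1 and Proposition~3.1, $H=\mathcal E(N)$ is a $\mathbb Q$-martingale.

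For \eqref{eqn:expaffinstochvolmod} I would again exploit time-homogeneity of $(R,N)$ under $\mathbb Q$, so that for $(t,u,v)\in\mathcal Q$ the process $h(t-s,R_s,N_s)=\E^{\mathbb Q}[e^{\operatorname{Tr}(uR_t)+v^{\top}N_t}\,|\,(R_s,N_s)]$ is a $\mathbb Q$-martingale, conjecture the exponentially affine form $h(t-s,R_s,N_s)=\exp(\operatorname{Tr}(\Psi(t-s,u,v)R_s)+v^{\top}N_s+\Phi(t-s,u,v))$ with $\Psi(0,u,v)=u$, $\Phi(0,u,v)=0$, and apply It\^o's formula for semimartingales. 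Since $R$ has no continuous martingale part and $\langle R,N\rangle=0$, the only second-order contribution is $\tfrac12 v^{\top}R_s v$ coming from $d\langle N_i,N_j\rangle_s=(R_s)_{ij}\,ds$; the drift of $R$ contributes $\operatorname{Tr}(\Psi(t-s,u,v)(\lambda+\Lambda(R_s)))$ together with the truncation part of the constant drift; and the jumps of $R$, after compensation against $m^J(d\xi)\,ds$, contribute the integral term displayed in the $\Phi$-equation of \eqref{eqn:affwjumps}, the $\chi$-dependent pieces cancelling in the usual way. Setting the bounded variation part of $h(t-s,R_s,N_s)$ to zero and identifying the coefficient of $R_s$ and the constant term yields the two ODEs of \eqref{eqn:affwjumps} after reverting to the variable $t$; as always $\Phi$ is obtained from $\Psi$ by one integration.

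To close, I would use \cite{sv10} Theorem~3.7, exactly as in the proof of Proposition~\ref{prop:contstochvol}, to identify the maximal existence interval of this ODE system with the set of $(t,u,v)$ for which the left-hand side of \eqref{eqn:expaffinstochvolmod} is finite, i.e. with $\mathcal Q$; here this is in fact easier than in the continuous Heston case, since the $\Psi$-equation in \eqref{eqn:affwjumps} is \emph{linear} in $\Psi$ and therefore never explodes, the only real constraint being finiteness of $\int_{S_d^+\setminus\{0\}}(e^{\operatorname{Tr}(\xi\Psi(t,u,v))}-1)\,m^J(d\xi)$. I expect the main obstacle to be the bookkeeping of the jump and truncation terms in It\^o's formula, together with the preliminary (but essential) remark that the change of measure built from $\mathcal E(X)$ does not alter the law of $R$ — precisely because $X$ is continuous and $J$ is independent of $Q$ — so that the compensator $m^J$ survives unchanged into the $\Phi$-equation.
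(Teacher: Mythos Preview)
Your proposal is correct and follows essentially the same route as the paper's own proof: define $\mathbb Q$ via the stochastic exponential of $-\int_0^{\cdot}\eta^{\top}\sqrt{R_s}\,dQ_s$, verify that $H$ is a $\mathbb Q$-martingale by the same argument as in Proposition~\ref{prop:contstochvol}, conjecture the exponentially affine form for the conditional transform, apply It\^o's formula and equate the bounded-variation part to zero to obtain the two ODEs, and finally note that the $\Psi$-equation is linear and hence globally solvable. Your explicit remarks that the Girsanov change of measure leaves the law of $R$ (and hence the compensator $m^J$) untouched, and that the truncation terms cancel in the jump bookkeeping, are points the paper passes over silently but are exactly what is needed to make the computation rigorous.
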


\begin{proof}
By construction $(R,N)$ is again stochastically continuous.
Moreover it follows that $H$ is a martingale under the probability measure $d \mathbb Q=\mathcal E (- \int_0^{\cdot} \eta^{\top} \sqrt{R_s} dQ_s )$ as in the proof of Proposition \ref{prop:contstochvol}.
\\
\indent
Again similarly to Proposition \ref{prop:contstochvol} we have that the conditional Fourier-Laplace transform for $(t,u,v) \in \mathcal Q$
\begin{align*}
  h(t-s,R_s,N_s) &=\E^{\mathbb Q} \left[ e^{\operatorname{Tr} \left( u R_t\right) + v^\top  N_t} | (R_s,N_s)\right],
\end{align*}
is a martingale.
We assume that the conditional Fourier-Laplace transform is of exponentially affine form, more precisely that there exist functions $\Psi:\mathcal Q \to S_d+\mathrm{i}S_d$ and $\Phi:\mathcal Q \to \mathbb C^d$ such that $h$ has the form
\begin{align*}
  h(t-s,R_s,N_s)&=
\exp \left(  \operatorname{Tr}\left( \Psi(t-s,u,v) R_s \right) + v^\top N_s + \Phi(t-s,u,v) \right).
\end{align*}
We apply It\^o's formula and stipulate that the bounded variation term needs to be zero.
More precisely, this is equivalent to the equation
\begin{align*}
  0&=
\operatorname{Tr} \l( \frac{\Psi(t-s,u,v)}{\partial s} R_s \r) + \frac{\Phi(t-s,u,v)}{\partial s}
+ \operatorname{Tr} \l( (\lambda + \Lambda(R_s) + b^J) \Psi(t-s,u,v) \r)
+ \frac12 v^{\top} R_s v
\\
& \quad
+ \int_{S_d^+ \setminus \{0\}} ( e^{\operatorname{Tr}(\xi \Psi(t-s,u,v))}-1) m^J(d\xi),
\end{align*}
for $s \leq t$.
Equating coefficients leads to the following system of ODEs
\begin{align*}
  -\frac{\Psi(t-s,u,v)}{\partial s} 
&=
\Lambda^*(\Psi(t-s,u,v))
+\frac12 v^{\top} v,
\quad \Psi(t,u,v)=u,
\\
-\frac{\Phi(t-s,u,v)}{\partial s}
&=
(\lambda +b^J)\Psi(t-s,u,v)
+ \int_{S_d^+ \setminus \{0\}} ( e^{\operatorname{Tr}(\xi \Psi(t-s,u,v))}-1) m^J(d\xi),
\quad \Phi(t,u,v)&=0.
\end{align*}
With a change of variable we see that the above ODEs coincide with \eqref{eqn:affwjumps}.
The ODE for $\Phi$ can be solved via integration, provided there exists a solution to
\begin{align*}
  \frac{\partial \Psi(t,u,v)}{\partial t} 
&= 
\Lambda^*(\Psi(t,u,v)) + \frac12 v v^{\top},
 \quad 
\Psi(0,u,v)=u,  \quad (t,u,v) \in \mathcal Q.
\end{align*}
This is a linear ODE, hence by e.g. \cite{br89}, there exists a unique solution $\Psi(t,u,v) \in S_d$ for all $(t, u,v) \in \mathcal Q$.
This implies the result.
\end{proof}

\subsubsection{Power utility}

The investor's utility function is assumed to be
 \begin{align*}
  U(x) = \frac{1}{\gamma} x^{\gamma}, \quad \quad x \geq 0, \ \gamma \in (0,1),
 \end{align*}
and we let $F=0$.
By $\mathcal A$ we denote the set of all $d$-dimensional predictable processes $\pi$ that satisfy a.s. $\int_0^T \pi_s^{\top} \pi_s ds < \infty$.
For $i=1, \ldots, d$,  $\pi_i$ again denotes the fraction of the wealth invested in stock $i$ and any process $\pi \in \mathcal A$ is called an admissible (trading) strategy.
Hence, for a trading strategy $\pi$ and initial capital $x$ the wealth process has dynamics
\begin{align*}
 X_t^{x,\pi} = x+ \int_0^t X_{s}^{x, \pi} \pi_{s}^{\top} dN_s = x + \int_0^t X_s^{x, \pi} \pi_s^{\top} R_s \eta ds + \int_0^t X_{s}^{x, \pi} \pi_{s}^{\top} \sqrt{R_{s}} d Q_s, 
\end{align*}
for $t \in [0,T]$ which can also be written as a stochastic exponential
\begin{align*}
 X_t^{x,\pi} = x \mathcal E \left( \int_0^t \pi^{\top}_s R_s \eta ds + \int_0^t \pi^{\top}_s \sqrt{R_{s}} d Q_s \right), \quad t \in [0,T].
\end{align*}
The investor wants to maximize their expected utility of terminal wealth, i.e. we search for the value function
\begin{align*}
 V(x)= \sup_{\pi \in \mathcal A} \E \left[ \frac{1}{\gamma} (X_T^{x,\pi})^{\gamma}  \right]
, \quad x \geq 0.
\end{align*}
We are able to describe the value function and the optimal strategy of the maximization problem in terms of an ODE.
\begin{thm}
\label{thm:BNSpower}
Suppose the jump measure $m^J$ satisfies 
\begin{align*}
\int_{|\operatorname{Tr}(\Gamma(t) \xi)| > 1} e^{-\operatorname{Tr}(\Gamma(t) \xi)} m^J(d\xi) < \infty, \quad t \in  [0,T],
\end{align*}
where $\Gamma$ is the solution of the ODE
\begin{align}
\label{eqn:odeBNSpower}
- \frac{d \Gamma(t)}{d t} 
&=
\Lambda^*(\Gamma(t)) - \frac{\gamma}{2(1-\gamma)} \eta \eta^{\top}, \quad 
\Gamma(T)=0 .
\end{align}
Then the value function is given by
\begin{align*}
 V(x) 
&=\frac{1}{\gamma} x^{\gamma} 
\exp\left(-\operatorname{Tr}(\Gamma(0) r) - \int_0^T \operatorname{Tr}(\Gamma(s)(b^J+\lambda)) ds
- \int_0^T \int_{S_d^+\setminus \{0\}} (e^{-\operatorname{Tr}(\Gamma(s) \xi) } - 1 ) m^J (d\xi) ds \right), 
\end{align*}
for $x \geq 0,$
and the optimal strategy $\pi^{opt}$ is 
\begin{align*}
  \pi_t^{opt} \equiv \frac{1}{1-\gamma} \eta, \quad t \in [0,T].
\end{align*}
\end{thm}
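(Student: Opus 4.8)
The plan is to carry out the martingale-optimality argument used for Theorem~\ref{thmpower} in the present jump-driven model, the only new ingredient being the compensation of the jumps of the opportunity process $e^Y$. For $\pi\in\mathcal{A}$ I would put
\[
L^\pi_t=\frac{x^\gamma}{\gamma}\exp\left(\gamma\int_0^t\pi_s^\top R_s\eta\,ds+\gamma\int_0^t\pi_s^\top\sqrt{R_s}\,dQ_s-\frac{\gamma}{2}\int_0^t\pi_s^\top R_s\pi_s\,ds+Y_t\right),
\]
where $(Y,Z,K)$ solves a BSDE of the form \eqref{defBSDE} driven by $R$ --- whose admissible parameter set, by \eqref{eqn:RBNS}, is $(0,\lambda+b^J,\beta^{ij},m^J,0)$ with linear drift $B=\Lambda$ --- with terminal condition $F=0$ and a generator $f$ still to be chosen, so that $L^\pi_T=\frac{1}{\gamma}(X^{x,\pi}_T)^\gamma=U(X^{x,\pi}_T)$. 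The point is to choose $f$ so that $L^\pi$ is a supermartingale for every $\pi\in\mathcal{A}$ and a true martingale for a distinguished $\pi^{opt}\in\mathcal{A}$; since the filtration is such that $L^\pi_0=\frac{x^\gamma}{\gamma}e^{Y_0}$ is independent of $\pi$, this immediately gives $V(x)=\frac{x^\gamma}{\gamma}e^{Y_0}$ and the optimality of $\pi^{opt}$.

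To determine $f$ I would apply It\^o's formula to $L^\pi$, in the spirit of Lemma~\ref{lem:supermartpower}. Since $\alpha=0$ and $\mu=0$, the process $R$ has no continuous martingale part and jump compensator $m^J(d\xi)\,dt$, while $Q$ is a genuine Brownian motion independent of the jumps; collecting terms and using $K_t(\xi)=\operatorname{Tr}(\Gamma(t)\xi)$ along the sought solution, the drift of $L^\pi$ turns out to be $L^\pi_{t-}$ times
\[
\gamma\pi_t^\top R_t\eta-\frac{\gamma(1-\gamma)}{2}\pi_t^\top R_t\pi_t-f(R_t,K_t)+\int_{S_d^+\setminus\{0\}}\left(e^{K_t(\xi)}-1-K_t(\xi)\right)m^J(d\xi).
\]
The quadratic $\pi\mapsto\gamma\pi^\top R_t\eta-\frac{\gamma(1-\gamma)}{2}\pi^\top R_t\pi$ is maximised over $\R^d$ at the constant vector $\pi^{opt}\equiv\frac{1}{1-\gamma}\eta\in\mathcal{A}$, with value $\frac{\gamma}{2(1-\gamma)}\operatorname{Tr}(R_t\eta\eta^\top)$, so choosing
\[
f(r,k)=\frac{\gamma}{2(1-\gamma)}\operatorname{Tr}(r\eta\eta^\top)+\int_{S_d^+\setminus\{0\}}\left(e^{k(\xi)}-1-k(\xi)\right)m^J(d\xi)
\]
--- the generator of Lemma~\ref{lem:supermartpower} specialised to zero correlation and no $W$, augmented by the jump-compensation term --- renders the drift $\le 0$ for every $\pi$ and $=0$ at $\pi^{opt}$. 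This $f$ fits the template \eqref{matrixgenerator} (take $c_x(t)=\frac{\gamma}{2(1-\gamma)}\eta\eta^\top$, $g_t(t,r)=e^r-1-r$, all remaining coefficient functions zero), and the exponential integrability of $m^J$ assumed in the statement is precisely what makes $\int(e^{k(\xi)}-1-k(\xi))\,m^J(d\xi)$ finite along the relevant $k$ and what supplies the condition \eqref{eqn:integrability1}. Then Theorem~\ref{thm:matrixBSDEsolution} applies: in the degenerate case $\alpha=\mu=\sigma=0$, $a=0$ the generalized Riccati ODE \eqref{eqn:matrixode} collapses to a \emph{linear} ODE with a unique global solution $\Gamma$, namely \eqref{eqn:odeBNSpower}, and the theorem yields $Z\equiv 0$, $K_t(\xi)=\operatorname{Tr}(\Gamma(t)\xi)$ and, via the linear ODE \eqref{eqn:odew}, $w(0)=\int_0^T\left(\operatorname{Tr}(\Gamma(s)(\lambda+b^J))+\int(e^{\operatorname{Tr}(\Gamma(s)\xi)}-1)\,m^J(d\xi)\right)ds$. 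Substituting into $L^\pi_0=\frac{x^\gamma}{\gamma}e^{Y_0}$ with $Y_0=\operatorname{Tr}(\Gamma(0)r)+w(0)$ gives the stated value function, and $\pi^{opt}\equiv\frac{1}{1-\gamma}\eta$ the stated optimal strategy.

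The remaining --- and main --- step is to promote the local statements to genuine ones. As $L^\pi\ge 0$, a Fatou argument along a localising sequence, exactly as at the end of the proof of Lemma~\ref{lem:supermartpower}, shows $L^\pi$ is a true supermartingale for every $\pi\in\mathcal{A}$. For $\pi^{opt}$ the drift vanishes, so $L^{\pi^{opt}}$ is a nonnegative local martingale, and one can write $L^{\pi^{opt}}_t=\frac{x^\gamma}{\gamma}\mathcal{E}(P)_t$ with $P$ of the form \eqref{eqn:P} for the choice $\sigma_Q(s)=\gamma\pi^{opt}_s$, $\sigma_W\equiv\sigma_{\hat Q}\equiv 0$, $\sigma_\mu(s)=\Gamma(s)$; since $\Delta P_t=e^{\operatorname{Tr}(\Gamma(t)\xi)}-1>-1$ the exponential is positive, and Theorem~\ref{thm:martingale} applies because its hypothesis \eqref{eqn:ehoch} for this $\sigma_\mu$ is exactly the exponential integrability of $m^J$ in the statement, so $\mathcal{E}(P)$, hence $L^{\pi^{opt}}$, is a true martingale. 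I expect this martingale verification to be the only delicate point --- the BSDE is here solved by an almost trivial linear ODE --- since it is the sole place where a real restriction on the jump measure is consumed. Combining everything, $\E[U(X^{x,\pi}_T)]=\E[L^\pi_T]\le L^\pi_0=\frac{x^\gamma}{\gamma}e^{Y_0}$ for all $\pi\in\mathcal{A}$ with equality at $\pi^{opt}$, which is the assertion.
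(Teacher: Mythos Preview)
Your argument is essentially the paper's own proof (Lemma~\ref{lem:supermartBNSpower} followed by the martingale-optimality step), including the use of Theorem~\ref{thm:matrixBSDEsolution} to solve the BSDE and Theorem~\ref{thm:martingale} to upgrade the local martingale $L^{\pi^{opt}}$ to a true martingale. The only discrepancy is a sign convention: you place $+Y_t$ in $L^\pi$ whereas the paper uses $-Y_t$, so your choice $c_x=\frac{\gamma}{2(1-\gamma)}\eta\eta^\top$ leads via Theorem~\ref{thm:matrixBSDEsolution} to the linear ODE $-\dot\Gamma=\Lambda^*(\Gamma)+\frac{\gamma}{2(1-\gamma)}\eta\eta^\top$, \emph{not} \eqref{eqn:odeBNSpower}; your $\Gamma$ is therefore the negative of the $\Gamma$ in the statement, and the identifications $\sigma_\mu(s)=\Gamma(s)$ and ``\eqref{eqn:ehoch} is exactly the hypothesis'' should be read through that sign flip (the paper takes $\sigma_\mu(s)=-\Gamma(s)$). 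Once this is tracked, $e^{Y_0}$ with your $\Gamma$ equals $e^{-Y_0}$ with the paper's $\Gamma$ and the stated value function follows.
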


As before we solve the problem using the martingale optimality principle.
Applying the utility function to $X^{x, \pi}$ we get
\begin{align*}
 \frac{1}{\gamma} (X_t^{x, \pi})^{\gamma} 
= \frac{1}{\gamma} x^{\gamma} \exp \left( \int_0^t \gamma \pi_s^{\top} R_s \eta ds + \int_0^t \gamma \pi^{\top}_s \sqrt{R_{s}} d Q_s -\frac12 \int_0^t \gamma \pi^{\top}_s R_s \pi_s ds \right),
\end{align*}
for $t \in [0,T]$.
This suggests the following choice of $L^{\pi}$
\begin{align*}
 L_t^{\pi} = x^{\gamma} \exp \left( \int_0^t \gamma \pi_s^{\top} R_s \eta ds + \int_0^t \gamma \pi^{\top}_{s} \sqrt{R_{s}} d Q_s -\frac12 \int_0^t \gamma \pi^{\top}_s R_s \pi_s ds - Y_t \right), 
\end{align*}
for $t \in [0,T]$, and where $Y$ is the first component of the solution of a BSDE with terminal condition 0.
More precisely we want to find a generator $f$ for the BSDE
\begin{align}
\label{BSDEBNSpower} 
Y_t & = 0 - \int_t^T \int_{S_d^+ \setminus \{0\}} K_s(\xi) (\mu^J(ds,d\xi)-m^J(d\xi)ds) + \int_t^T f(R_s,K_s) ds,
\end{align}
$t \in [0,T]$, such that with its solution $(Y,K)$ $L^{\pi}$ satisfies the above requirements.

\begin{lem}
\label{lem:supermartBNSpower}
Let the jump measure $m^J$ satisfy
\begin{align}
\label{eqn:integrability2}
\int_{|\operatorname{Tr}(\Gamma(t) \xi)| > 1} e^{-\operatorname{Tr}(\Gamma(t) \xi)} m^J(d\xi) < \infty, \quad t \in  [0,T],
\end{align}
where $\Gamma$ is the solution of \eqref{eqn:odeBNSpower}.
Suppose the generator in \eqref{BSDEpower} is of the following form
\begin{align}
\label{generatorBNSpower}
 f(r,k) & = 
- \frac{\gamma}{2(1-\gamma)} \eta^{\top} r \eta
- \int_{S_d^+ \setminus \{0\}} \left( e^{-k(\xi)} -1 +k(\xi) \right) m^J(d\xi) , 
\end{align}
for all $r \in S_d^+$ and $k:S_d^+ \to \R$.
Then BSDE \eqref{BSDEBNSpower} is solved by 
\begin{align*}
  Y_t & = \operatorname{Tr}(\Gamma(t) R_t) + \int_t^T \operatorname{Tr}(\Gamma(s)(b^J+\lambda)) ds
+ \int_t^T \int_{S_d^+\setminus \{0\}} (1-e^{-\operatorname{Tr}(\Gamma(s) \xi) } ) m^J (d\xi)ds
\\
  K_t(\xi) & = \operatorname{Tr}(\Gamma(t) \xi), \quad t \in [0,T], \quad \xi \in S_d^+.
\end{align*}
Moreover $L^{\pi}$ is a supermartingale for every strategy $\pi \in \mathcal A$ and if $\pi^{opt}$ satisfies
\begin{align}
\pi^{opt}_t & = \frac{1}{1- \gamma} \eta, \quad t \in [0,T],
\end{align}
then $L^{\pi^{opt}}$ is a martingale.
\end{lem}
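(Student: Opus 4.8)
The plan is to read off the explicit solution $(Y,K)$ as a specialization of Theorem~\ref{thm:matrixBSDEsolution}, and then to establish the (super)martingale properties of $L^\pi$ by an It\^o-formula computation in the spirit of Lemma~\ref{lemsupermartexp}; the only delicate point is the compensation of the jump terms.

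\emph{Step 1 (explicit solution).} The volatility process $R$ in \eqref{eqn:RBNS} is an affine semimartingale whose continuous diffusion part vanishes, so $\Sigma=0$, $B^*=\Lambda^*$, the state-dependent jump measure is absent ($\mu=0$), and the constant jump measure equals $m^J$. Reading off coefficients in the general form \eqref{matrixgenerator}, the generator \eqref{generatorBNSpower} corresponds to $c_x(t)\equiv-\tfrac{\gamma}{2(1-\gamma)}\eta\eta^\top$ and $g_t(t,r)=-(e^{-r}-1+r)$, all remaining coefficient functions being zero and the terminal data $u=a=v=0$; the integrability requirement \eqref{eqn:integrability1} for $m=m^J$ follows from \eqref{eqn:integrability2} via the elementary bound $|\operatorname{Tr}(\Gamma(t)\xi)|\le C\bigl(e^{-\operatorname{Tr}(\Gamma(t)\xi)}+1\bigr)$ on $\{\|\xi\|>1\}$. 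Because $\Sigma=0$, $c_y=0$ and $\mu=0$, the generalized Riccati ODE \eqref{eqn:matrixode} degenerates to the linear ODE \eqref{eqn:odeBNSpower}, which admits a unique global solution $\Gamma$ by standard linear ODE theory; likewise $\varpi$ in \eqref{eqn:odew} reduces to $\varpi(t,u,v)=\operatorname{Tr}\bigl(u(\lambda+b^J)\bigr)+\int_{S_d^+\setminus\{0\}}\bigl(1-e^{-\operatorname{Tr}(u\xi)}\bigr)m^J(d\xi)$, so that integrating \eqref{eqn:odew} reproduces exactly the stated expression for $Y$, while $Z\equiv0$, $\hat Z\equiv0$ and $K_t(\xi)=\operatorname{Tr}(\Gamma(t)\xi)$.

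\emph{Step 2 (decomposition of $L^\pi$).} Fix $\pi\in\mathcal A$ and apply It\^o's formula to $L^\pi_t=x^\gamma\exp\bigl(\int_0^t\gamma\pi_s^\top R_s\eta\,ds+\int_0^t\gamma\pi_s^\top\sqrt{R_s}\,dQ_s-\tfrac12\int_0^t\gamma\pi_s^\top R_s\pi_s\,ds-Y_t\bigr)$, inserting the dynamics $dY_t=\int_{S_d^+\setminus\{0\}}K_t(\xi)\bigl(\mu^J(dt,d\xi)-m^J(d\xi)dt\bigr)-f(R_t,K_t)\,dt$ obtained in Step~1. Collecting terms one writes $L^\pi=M^\pi A^\pi$, where $M^\pi=x^\gamma\mathcal E(P^\pi)$ with $P^\pi_t=\int_0^t\gamma\pi_s^\top\sqrt{R_s}\,dQ_s+\int_0^t\int_{S_d^+\setminus\{0\}}(e^{-K_s(\xi)}-1)\bigl(\mu^J(ds,d\xi)-m^J(d\xi)ds\bigr)$ is a local martingale, and $A^\pi_t=\exp\bigl(\int_0^t(\cdots)\,ds\bigr)$ where, after substituting \eqref{generatorBNSpower} and using $d\langle\int_0^\cdot\gamma\pi^\top\sqrt R\,dQ\rangle_t=\gamma^2\pi_t^\top R_t\pi_t\,dt$, the integrand collapses to
\begin{align*}
&\gamma\pi_t^\top R_t\eta-\tfrac{\gamma(1-\gamma)}{2}\pi_t^\top R_t\pi_t-\tfrac{\gamma}{2(1-\gamma)}\eta^\top R_t\eta\\
&\qquad=-\tfrac{\gamma(1-\gamma)}{2}\Bigl(\pi_t-\tfrac{1}{1-\gamma}\eta\Bigr)^\top R_t\Bigl(\pi_t-\tfrac{1}{1-\gamma}\eta\Bigr)\le 0,
\end{align*}
since $\gamma\in(0,1)$ and $R_t\in S_d^+$. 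Hence $A^\pi$ is non-increasing, $L^\pi$ is a non-negative local supermartingale, and the Fatou argument used in Lemma~\ref{lem:supermartpower} upgrades it to a true supermartingale. For $\pi^{opt}_t=\tfrac{1}{1-\gamma}\eta$ the integrand vanishes identically, so $A^{\pi^{opt}}\equiv1$ and $L^{\pi^{opt}}=x^\gamma\mathcal E(P^{\pi^{opt}})$; Theorem~\ref{thm:martingale} applied with $\sigma_Q(s)=\tfrac{\gamma}{1-\gamma}\eta$, $\sigma_W=\sigma_{\hat Q}\equiv0$ and $\sigma_\mu(s)=-\Gamma(s)$ — whose hypothesis \eqref{eqn:ehoch} is precisely \eqref{eqn:integrability2} — then shows that $\mathcal E(P^{\pi^{opt}})$, and hence $L^{\pi^{opt}}$, is a true martingale.

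\emph{Main obstacle.} The It\^o computation in Step~2 is the crux: one must compensate the two pure-jump contributions to $dL^\pi$ — the one stemming from the jump part $-\int_{S_d^+\setminus\{0\}}K_t(\xi)\,\mu^J(dt,d\xi)$ of $-Y$, and the second-order correction $\sum_s\bigl(e^{-\Delta Y_s}-1+\Delta Y_s\bigr)$ — so that the resulting $m^J(d\xi)$-integral $\int_{S_d^+\setminus\{0\}}\bigl(e^{-K_t(\xi)}-1+K_t(\xi)\bigr)m^J(d\xi)$ appearing in the drift is cancelled exactly by the corresponding term in $f$ from \eqref{generatorBNSpower}, leaving only the manifestly non-positive quadratic form displayed above. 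Keeping track of which integrability hypothesis (namely \eqref{eqn:integrability2}) legitimizes each of the two invocations of Theorems~\ref{thm:matrixBSDEsolution} and \ref{thm:martingale} completes the bookkeeping.
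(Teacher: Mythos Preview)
Your proposal is correct and follows the same route as the paper: obtain $(Y,K)$ from Theorem~\ref{thm:matrixBSDEsolution}, split $L^\pi$ via It\^o's formula into a local martingale times a non-increasing factor through exactly the quadratic completion you display, upgrade to a true supermartingale via Fatou as in Lemma~\ref{lem:supermartpower}, and verify the martingale case through Theorem~\ref{thm:martingale} with the choices $\sigma_Q=\tfrac{\gamma}{1-\gamma}\eta$, $\sigma_W=\sigma_{\hat Q}=0$, $\sigma_\mu=-\Gamma$. The only cosmetic differences are that the paper invokes Corollary~\ref{cor:christaeberhardriccati} (which additionally yields $\Gamma\in S_d^-$) rather than plain linear ODE theory for \eqref{eqn:odeBNSpower}, and that your attempted derivation of \eqref{eqn:integrability1} from \eqref{eqn:integrability2} is not quite airtight (for instance at $t=T$, or whenever $\Gamma(t)$ is singular) --- but the paper does not verify that hypothesis explicitly either.
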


\begin{proof}
For all $y \in \R$ we define
\begin{align*}
c_x = -\frac{\gamma}{2(1-\gamma)} \eta \eta^{\top}, \quad g_t(y)=-e^{-y}+1-y.
\end{align*}
Then we can see that by Corollary \ref{cor:christaeberhardriccati} there exists a unique solution $\Gamma$ with values in $S_d^-$ to \eqref{eqn:odeBNSpower}.
As a result we find the above solution of \eqref{BSDEBNSpower} with Theorem \ref{thm:matrixBSDEsolution}.

We apply It\^o's formula which gives that for all $\pi \in \mathcal A$
\begin{align*}
  dL^{\pi}_t 
&=
L^{\pi}_t  \left( \gamma \pi^{\top}_t \sqrt{R_t} dQ_t\right)\
+ L^{\pi}_t \left( \gamma \pi_t R_t \eta - \frac12 \gamma \pi^{\top}_t R_t \pi_t + f(R_t,K_t) + \frac12 \gamma^2 \pi^{\top}_t R_t \pi_t \right) dt
\\
& \quad 
+ L_t^{\pi} \int_{S_d^+ \setminus \{0\} } (e^{-K_t(\xi)}-1) \left( \mu^J(dt,d\xi) - m^J(d\xi) dt\right)
\\
& \quad
+ L_t^{\pi} \int_{S_d^+ \setminus \{0\} } \left( e^{-K_t(\xi)}-1 + K_t(\xi)\right) m^J(d\xi)dt,
\end{align*}
where we have used integrability condition \eqref{eqn:integrability2}. 
This means that $L^{\pi}$ is a local supermartingale for all $\pi \in \mathcal A$, if the finite variation part $dt \otimes \mathbb P$-a.e. satisfies
\begin{align*}
&  L^{\pi}_t \left( \gamma \pi^{\top}_t R_t \eta - \frac12 \gamma \pi^{\top}_t R_t \pi_t + f(R_t,K_t) + \frac12 \gamma^2 \pi^{\top}_t R_t \pi_t
\right.
\\
 & \left. \qquad
+ \int_{S_d^+ \setminus \{0\} } \left( e^{-K_t(\xi)}-1 + K_t(\xi)\right) m^J(d \xi)  \right) \leq 0.
\end{align*}
Since $L^{\pi}>0$, the generator $f$ needs to fulfill
\begin{align*}
f(R_t,K_t) 
& \leq
 - \gamma \pi_t^{\top} R_t \eta 
+ \frac12 \gamma (1-\gamma)\pi^{\top}_t R_t \pi_t 
- \int_{S_d^+ \setminus \{0\} } \left( e^{-K_t(\xi)}-1 + K_t(\xi)\right) m^J(d \xi)  ,
\end{align*}
which is equivalent to
\begin{align*}
f(R_t,K_t) 
& \leq
\frac12 \gamma (1-\gamma) \big|\sqrt{R_t} \pi_t - \frac{1}{1-\gamma} \sqrt{R_t} \eta \big|^2
 - \frac{\gamma}{2(1-\gamma)} |\sqrt{R_t} \eta|^2
\\
& \quad
- \int_{S_d^+ \setminus \{0\} } \left( e^{-K_t(\xi)}-1 + K_t(\xi)\right) m^J(d \xi)  .
\end{align*}
With \eqref{generatorBNSpower} this inequality is true for all $\pi \in \mathcal A$ and hence $L^{\pi}$ a local supermartingale.
Obviously the inequality is an equality for $\pi^{opt}$, for which $L^{\pi^{opt}}$ is then a local martingale.

Since $L^{\pi}$ is bounded below by $0$, the fact that $L^{\pi}$ is a supermartingale for all $\pi \in \mathcal A$ follows as in the proof of Lemma \ref{lem:supermartpower} by Fatou's Lemma.
Note that the process $L^{\pi^{opt}}$ is given by
\begin{align*}
L_t^{\pi^{opt}} &= x^{\gamma} \mathcal E \left( \int_0^t \gamma (\pi^{opt}_s)^{\top} \sqrt{R_s} dQ_s
+ \int_0^t \int_{S_d^+ \setminus \{0\} } (e^{-K_s(\xi)}-1) (\mu^J(ds,d\xi) - m^J(d\xi) ds)
\right),
\end{align*}
and hence, setting $\sigma_Q(s) \equiv \frac{\gamma}{1-\gamma} \eta$, $\sigma_W(s) \equiv 0$ and $\sigma_{\mu}(s)=-\Gamma(s)$, $s \in [0,T]$, we obtain the martingale property of $L^{\pi^{opt}}$ by Theorem \ref{thm:martingale}.
\end{proof}

\begin{proof}[Proof of Theorem \ref{thm:BNSpower}]
Note that we have from Lemma \ref{lem:supermartBNSpower} 
$$\mathbb E\left[U(X_T^{x,\pi})\right] 
=\mathbb E\left[\frac{1}{\gamma} L_T^{\pi} \right] 
\leq \mathbb E \left[ \frac{1}{\gamma} L_0^{\pi} \right] 
= \frac{1}{\gamma} x^{\gamma} \exp(-Y_0) , \quad \pi \in \mathcal A.
$$
Due to $L^{\pi^{opt}}$ being a martingale we have 
$$\mathbb E \l[U\l(X_T^{x,\pi^{opt}} \r) \r] = \mathbb E \l[ \frac{1}{\gamma} L_0^{\pi^{opt}} \r] $$ and thus \eqref{optstratpower} is indeed the optimal strategy. 
This also provides the representation of the value function.
\end{proof}

\subsubsection{Exponential utility}

We now show how to solve the utility maximization problem for exponential utility in the presence of jumps and random revenues $F$.
The exponential utility function is given by
\begin{align*}
  U(x)=-\exp(-\gamma x), \quad x \in \R,
\end{align*}
where $\gamma>0$ denotes the risk aversion.
As before in Section \ref{sec:contexput} the deterministic $d$-dimensional functions $\pi=(\pi(t))_{t \in [0,T]}$ form the set of admissible strategies $\mathcal A$.
For $i = 1, \ldots ,d$, $\pi^i$ denotes again the amount of money invested in stock $H^i$, where we recall that $H=(H^1, \ldots, H^d)= \mathcal E (N)$ with $N$ defined in \eqref{eqn:defN}.
In particular the wealth process $X^{x,\pi}$ corresponding to a trading strategy $\pi$ and an initial capital $x \geq 0$ satisfies
\begin{align*}
 X_t^{x,\pi} 
&= x + \sum_{i=1}^d \int_0^t \frac{\pi_{i}(s)}{H_{i,s}} dH_{i,s} 
= x+ \int_0^t \pi^{\top}(s) dN_s 
\\
&= x + \int_0^t \pi^{\top}(s) R_s \eta ds + \int_0^t \pi^{\top}(s) \sqrt{R_{s}} d Q_s.
\end{align*}
As in Section \ref{sec:contexput}, using the same notation, we will also compute the utility indifference prices for variance swaps.
This means that we need to solve the problem
\begin{align*}
  V^{F^i}(x) &= \sup_{\pi \in A} \mathbb E \left[ U\left(X_t^{x,\pi} + F^i \right) \right], \quad x \geq 0,
\end{align*}
for $i \in \{0,1, \ldots,d\}$, which is done in the following Theorem.

\begin{thm}
\label{thm:BNSexp}
  Let $i \in \{0,1,\ldots,d\}$,
\begin{align*}
  \int_{|\gamma \operatorname{Tr}(\Gamma^i(s)\xi)|>1}  e^{\gamma \operatorname{Tr}(\Gamma^i(s)\xi)} m^J(d\xi)< \infty, \quad s \in [0,T],
\end{align*}
where $\Gamma^i$ is the solution of the ODE
\begin{align}
\label{eqn:ODEBNSexp}
  -\frac{d \Gamma^i(t)}{d t} &=
\Lambda^*(\Gamma^i(t)) + \frac{1}{2 \gamma } \eta \eta^{\top} + a^{ii},
\quad \Gamma^i(T)=0.
\end{align}
Then the value function satisfies
\begin{align*}
  V^{F^i}(x)&=-\exp \left(-\gamma \left( x-K_{i}+\operatorname{Tr}(\Gamma^i(0) r) + \int_0^T \operatorname{Tr}(\Gamma^i(s) (b^J+ \lambda)) ds 
\right. \right.
\\ \nonumber
& \hspace{2cm} \left. \left.
-  \frac{1}{\gamma} \int_0^T \int_{S_d^+ \setminus \{0\}} \left( e^{\gamma \operatorname{Tr}(\Gamma^i(s)\xi)} -1\right) m^J(d\xi) ds  \right)\right),
\end{align*}
and the optimal strategy $\pi^{F^i}$ is given by
\begin{align*}
  \pi^{F^i}(t)& \equiv \frac{1}{\gamma} \eta, \quad t \in [0,T].
\end{align*}
\end{thm}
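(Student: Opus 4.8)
The plan is to follow the martingale optimality principle, exactly as in the proofs of Theorems \ref{thmexp} and \ref{thm:BNSpower}, now combining the continuous structure of the wealth process with the jump structure inherited from $J$. Fix $i\in\{0,1,\dots,d\}$ and dynamize the problem by setting $L^{\pi,i}_t=-\exp\bigl(-\gamma(X^{x,\pi}_t+Y^i_t)\bigr)$, where $(Y^i,K^i)$ is the solution of a BSDE driven by the compensated jump measure of $R$, with terminal value $F^i(O_T)$ and $O_t=\int_0^t R_s\,ds$ (i.e.\ $o_1=\sigma=0$, $o_2=I_d$ in \eqref{eqn:hatN} and terminal matrix $a^{ii}$). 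Recalling that $R$ in \eqref{eqn:RBNS} has no Brownian component, the appropriate generator is the Merton-type term of Lemma \ref{lemsupermartexp} (with the $z$-dependence dropped) together with an exponential jump term of the shape $-\tfrac1\gamma\int_{S_d^+\setminus\{0\}}\bigl(e^{\gamma k(\xi)}-1-\gamma k(\xi)\bigr)m^J(d\xi)$, that is, in the notation of \eqref{matrixgenerator} one takes $c_x$ a multiple of $\eta\eta^{\top}$ and $g_t(t,y)$ a multiple of $e^{\gamma y}-1-\gamma y$. The crucial point here is that $\xi\mapsto g_t\bigl(t,\operatorname{Tr}(\Gamma^i(t)\xi)\bigr)$ is $m^J$-integrable exactly because of the standing hypothesis $\int_{|\gamma\operatorname{Tr}(\Gamma^i(s)\xi)|>1}e^{\gamma\operatorname{Tr}(\Gamma^i(s)\xi)}m^J(d\xi)<\infty$, and this is precisely what makes the generator (hence Theorem \ref{thm:matrixBSDEsolution}) applicable.

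Applying Theorem \ref{thm:matrixBSDEsolution} then produces $Y^i_t=\operatorname{Tr}(\Gamma^i(t)R_t)+\operatorname{Tr}(a^{ii}O_t)-K_i+\int_t^T(\dots)\,ds$ and $K^i_t(\xi)=\operatorname{Tr}(\Gamma^i(t)\xi)$, where $\Gamma^i$ solves the matrix ODE \eqref{eqn:ODEBNSexp}. Since $R$ has vanishing diffusion, that ODE carries no quadratic term and is linear, so a unique solution on all of $[0,T]$ exists automatically; moreover, because the inhomogeneity $\tfrac{1}{2\gamma}\eta\eta^{\top}+a^{ii}$ lies in $S_d^+$ and $\Lambda^*$ is the adjoint of the admissible linear drift $\Lambda$ (so $e^{t\Lambda^*}$ preserves $S_d^+$), the variation-of-constants representation gives $\Gamma^i(\cdot)\in S_d^+$. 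I would then apply It\^o's formula with jumps to $L^{\pi,i}$ and decompose it as a product $L^{\pi,i}=M^{\pi,i}A^{\pi,i}$, where $M^{\pi,i}$ is the Dol\'eans exponential of the local martingale carrying the $dQ$-integral of $\pi$ together with the compensated $\mu^J$-integral of $e^{-\gamma K^i(\xi)}-1$, and $A^{\pi,i}_t=-\exp\bigl(\int_0^t g(R_s,\pi(s))\,ds\bigr)$ is the continuous finite-variation factor. Completing the square in $\pi$ exactly as in Lemma \ref{lemsupermartexp} and using the chosen form of $f$ shows $g\ge0$, with $g\equiv0$ precisely for the candidate $\pi^{F^i}(t)\equiv\frac1\gamma\eta$; hence $A^{\pi,i}$ is non-increasing, and constant for $\pi^{F^i}$.

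To promote $M^{\pi,i}$ from a local to a true martingale I would invoke Theorem \ref{thm:martingale} with $\sigma_W\equiv\sigma_{\hat Q}\equiv0$, $\sigma_Q$ a suitable multiple of $\pi$ and $\sigma_\mu$ a suitable multiple of $\Gamma^i$; the instance of the integrability requirement \eqref{eqn:ehoch} produced by these coefficients is implied by the standing hypothesis on $m^J$ (and is in fact automatic, since $\Gamma^i(s)\in S_d^+$ forces the relevant exponent to be nonpositive). As $M^{\pi,i}>0$ is then a true martingale and $A^{\pi,i}$ is $\mathcal F$-adapted and non-increasing, conditioning gives $\mathbb E[L^{\pi,i}_t\mid\mathcal F_s]\le A^{\pi,i}_s\,\mathbb E[M^{\pi,i}_t\mid\mathcal F_s]=L^{\pi,i}_s$, so $L^{\pi,i}$ is a supermartingale for every $\pi\in\mathcal A$ and a true martingale for $\pi^{F^i}$. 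Consequently $\mathbb E[U(X^{x,\pi}_T+F^i)]=\mathbb E[L^{\pi,i}_T]\le L^{\pi,i}_0=-\exp(-\gamma(x+Y^i_0))$ with equality at $\pi^{F^i}$; reading off $Y^i_0=-K_i+\operatorname{Tr}(\Gamma^i(0)r)+\int_0^T\operatorname{Tr}(\Gamma^i(s)(b^J+\lambda))\,ds-\frac1\gamma\int_0^T\!\!\int_{S_d^+\setminus\{0\}}(e^{\gamma\operatorname{Tr}(\Gamma^i(s)\xi)}-1)\,m^J(d\xi)\,ds$ from the explicit solution yields the stated value function, and $\pi^{F^i}(t)\equiv\frac1\gamma\eta$ is the maximizer.

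The step I expect to be the main obstacle is not the square-completion bookkeeping (routine once $f$ is guessed correctly) but making the multiplicative decomposition $L^{\pi,i}=M^{\pi,i}A^{\pi,i}$ rigorous with jumps present, so that every jump contribution is routed into $M^{\pi,i}$ while $A^{\pi,i}$ stays continuous, together with the verification, via Theorem \ref{thm:martingale}, that $M^{\pi,i}$ is genuinely a martingale and not merely a local one — and keeping careful track of which exponential-moment condition on $m^J$ is used for which purpose (generator integrability versus the true-martingale property).
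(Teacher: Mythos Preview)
Your proposal is correct and mirrors the paper's argument essentially step for step: the same martingale optimality principle, the same $L^{\pi,i}=M^{\pi,i}A^{\pi,i}$ decomposition, the same appeal to Theorem~\ref{thm:matrixBSDEsolution} for the BSDE solution and to Theorem~\ref{thm:martingale} for the true-martingale property of $M^{\pi,i}$, and the same square-completion to identify $\pi^{F^i}\equiv\tfrac{1}{\gamma}\eta$. The only cosmetic difference is that the paper obtains existence and positive-semidefiniteness of $\Gamma^i$ by invoking Proposition~\ref{christaeberhardriccati}, whereas you argue it directly from the linearity of \eqref{eqn:ODEBNSexp} and a variation-of-constants representation; both routes are valid and yield the same conclusion.
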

We will use the martingale optimality principle again and construct for every $i \in \{0,1,\ldots,d\}$, a process
\begin{align*}
  L^{\pi,i}_t &= -\exp(-\gamma (X_t^{x,\pi} + Y^i_t)), \quad t \in [0,T], \quad \pi \in \mathcal A,
\end{align*}
where $(Y^i,K^i)$ is the solution of
\begin{align}
\label{eqn:BSDEBNSexp}
  Y^i_t
&=F^i
- \int_t^T \int_{S_d^+ \setminus \{0\}} K^i_s(\xi) (\mu^J(ds,d\xi)-m^J(d\xi)ds)
+ \int_t^T f(R_s,K^i_s) ds,
\end{align}
with a generator $f$ such that $L^{\pi,i}$ satisfies the conditions
\begin{itemize}
 \item the terminal condition $L^{\pi,i}_T = U(X_T^{x,\pi}+F^i)$ is satisfied for all $\pi \in \mathcal A$,
 \item the process $L^{\pi,i}$ is a supermartingale for all $\pi \in \mathcal A$ and there is a $\pi^{F^i} \in \mathcal A$ such that $L^{\pi^{F^i}}$ is a martingale.
\end{itemize}

The following Lemma shows how the generator of BSDE \eqref{eqn:BSDEBNSexp} needs to be chosen in order to meet the above requirements.

\begin{lem}
Let $i \in \{0,1, \ldots, d\}$, 
\begin{align*}
  \int_{|\gamma \operatorname{Tr}(\Gamma^i(t))|>1}  e^{\gamma \operatorname{Tr}(\Gamma^i(t)\xi)} m^J(d\xi)< \infty,
\end{align*}
for all $t \in [0,T]$ and with $\Gamma^i$ being the solution of \eqref{eqn:ODEBNSexp}.
Let the generator $f$ in \eqref{eqn:BSDEBNSexp} have the form
\begin{align}
\label{eqn:generatorBNSexp}
  f(r,k^i)&= \frac{1}{2\gamma} \eta^{\top} r \eta - \frac{1}{\gamma} \int_{S_d^+ \setminus \{0\}} \left( e^{\gamma k^i(\xi)} -1+\gamma k^i(\xi)\right) m^J(d\xi),
\end{align}
for all $r \in S_d^+$ and $k^i:S_d^+ \to \R$.
Then the solution of BSDE \eqref{eqn:BSDEBNSexp} is given by
\begin{align}
\label{eqn:solBSDEBNS}
  Y^i_t &= \operatorname{Tr}(\Gamma^i(t) R_t) + \operatorname{Tr} \left( a^{ii} \int_0^t R_s ds \right) -K_{i} + \int_t^T \operatorname{Tr}(\Gamma^i(s) (b^J+ \lambda)) ds 
\\ \nonumber
& \quad
- \frac{1}{\gamma} \int_t^T \int_{S_d^+ \setminus \{0\}} \left(e^{\gamma \operatorname{Tr}(\Gamma^i(s)\xi)} - 1\right) m^J(d\xi) ds,
  \\ \nonumber
  K^i_t(\xi) &=\operatorname{Tr}(\Gamma^i(t)\xi), \quad t \in [0,T], \ \xi \in S_d^+.
\end{align}
Moreover for all $\pi \in \mathcal A$ the process $L^{\pi,i}$ is a supermartingale and $L^{\pi^{F^i},i}$ is a martingale.
\end{lem}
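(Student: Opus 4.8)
The plan is to reproduce, in the presence of jumps, the three--step argument of Lemma~\ref{lemsupermartexp} and Lemma~\ref{lem:supermartBNSpower}: first solve the BSDE \eqref{eqn:BSDEBNSexp} via Theorem~\ref{thm:matrixBSDEsolution}; then apply It\^o's formula to $L^{\pi,i}$ and factor it as a local martingale times a continuous finite--variation process; finally read off the sign of that factor and invoke Theorem~\ref{thm:martingale} for the martingale part.

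For the first assertion I would match the data of BSDE \eqref{eqn:BSDEBNSexp} to Theorem~\ref{thm:matrixBSDEsolution}. The forward process $R$ has no diffusion ($\alpha=0$) and no state--dependent jump part ($M\equiv0$, $\mu\equiv0$); the terminal value $F^i=\operatorname{Tr}(a^{ii}O_T)-K_i$ with $O_t=\int_0^tR_s\,ds$ corresponds to $u=0$, $a=a^{ii}$, $v=-K_i$, $\sigma=0$, $o_1=0$, $o_2=I_d$; and the generator \eqref{eqn:generatorBNSexp} corresponds to $c_x=\tfrac1{2\gamma}\eta\eta^\top$ and $g_t(t,r)=-\tfrac1\gamma(e^{\gamma r}-1+\gamma r)$, all remaining coefficients vanishing. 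Hence $\mathscr L\equiv0$ and $\mathscr C=\tfrac1{2\gamma}\eta\eta^\top+a^{ii}\in S_d^+$, so the generalized Riccati equation \eqref{eqn:matrixode} degenerates to the \emph{linear} equation \eqref{eqn:ODEBNSexp}; taking $c_{zz}\equiv0\in S_d^-$, Proposition~\ref{christaeberhardriccati} (or simply linear ODE theory) produces a unique global solution $\Gamma^i(\cdot)\in S_d^+$. The assumed exponential moment of $m^J$ on the sets $\{|\gamma\operatorname{Tr}(\Gamma^i(t)\xi)|>1\}$, together with the admissibility bound $\int_{\|\xi\|\le1}\|\xi\|\,m^J(d\xi)<\infty$ and with \eqref{eqn:integrability1}, guarantees that the integrals defining the driver, the linear equation for $w$, and the solution \eqref{eqn:solBSDEBNS} are all finite, so Theorem~\ref{thm:matrixBSDEsolution} yields precisely \eqref{eqn:solBSDEBNS}.

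For the second assertion, apply It\^o's formula to $L^{\pi,i}_t=-\exp(-\gamma(X^{x,\pi}_t+Y^i_t))$. Since $Y^i$ has no continuous martingale component (because $R$ is a pure--jump affine process), $L^{\pi,i}$ factors as $L^{\pi,i}=M^{\pi,i}A^{\pi,i}$, where $M^{\pi,i}=-L^{\pi,i}_0\,\mathcal E(P^{\pi,i})$ is a local martingale driven by
\[
P^{\pi,i}_t=-\gamma\int_0^t\pi^\top(s)\sqrt{R_s}\,dQ_s+\int_0^t\int_{S_d^+\setminus\{0\}}\bigl(e^{-\gamma K^i_s(\xi)}-1\bigr)\,(\mu^J(ds,d\xi)-m^J(d\xi)\,ds),
\]
and $A^{\pi,i}$ is the continuous process $A^{\pi,i}_t=-\exp(\int_0^tD_s\,ds)$ whose exponent, once the jump part of the generator \eqref{eqn:generatorBNSexp} has cancelled the compensator thrown off by It\^o's formula (exactly as in the proof of Lemma~\ref{lem:supermartBNSpower}) and one completes the square in $\pi$, equals the non--negative quantity $D_s=\tfrac12|\gamma\sqrt{R_s}\pi(s)-\sqrt{R_s}\eta|^2$. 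Thus $A^{\pi,i}$ is non--increasing for every $\pi\in\mathcal A$, while for $\pi=\pi^{F^i}\equiv\tfrac1\gamma\eta$ the square vanishes, $D\equiv0$, and $A^{\pi^{F^i},i}\equiv-1$. Next, $M^{\pi,i}$ is a true martingale by Theorem~\ref{thm:martingale}: its driving process is of the form \eqref{eqn:P} with $\sigma_Q=-\gamma\pi$, $\sigma_W=\sigma_{\hat Q}\equiv0$ and $\sigma_\mu(s)=-\gamma\Gamma^i(s)$, and since $\Gamma^i\in S_d^+$ the required integrability \eqref{eqn:ehoch} holds automatically. Consequently, for $s\le t$, $L^{\pi,i}_t=M^{\pi,i}_tA^{\pi,i}_t\le M^{\pi,i}_tA^{\pi,i}_s$ by monotonicity of $A^{\pi,i}$ and positivity of $M^{\pi,i}$, so $\E[L^{\pi,i}_t\mid\mathcal F_s]\le A^{\pi,i}_s\,\E[M^{\pi,i}_t\mid\mathcal F_s]=A^{\pi,i}_sM^{\pi,i}_s=L^{\pi,i}_s$, i.e. $L^{\pi,i}$ is a supermartingale, and $L^{\pi^{F^i},i}=-M^{\pi^{F^i},i}$ is a true martingale.

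The main obstacle is the jump bookkeeping: one must check that the assumed exponential moment of $m^J$ makes every jump integral appearing in \eqref{eqn:solBSDEBNS} and in the linear equation for $w$ convergent, and one must verify that the jump term of the generator \eqref{eqn:generatorBNSexp} is exactly the one that absorbs the It\^o compensator, so that after completing the square nothing but the non--negative control term $D_s$ survives. The continuous part of the computation is the routine completion of the square already carried out in Lemma~\ref{lemsupermartexp}, and the true (rather than merely local) supermartingale property is obtained, as there, from the factorisation $L^{\pi,i}=M^{\pi,i}A^{\pi,i}$ with $M^{\pi,i}$ a genuine martingale.
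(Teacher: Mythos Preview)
Your argument follows the paper's line exactly: match the coefficients of \eqref{eqn:generatorBNSexp} to the template of Theorem~\ref{thm:matrixBSDEsolution}, invoke Proposition~\ref{christaeberhardriccati} for a unique $\Gamma^i\in S_d^+$, factor $L^{\pi,i}=M^{\pi,i}A^{\pi,i}$ via It\^o, complete the square to see $A^{\pi,i}$ is non-increasing, and apply Theorem~\ref{thm:martingale} to $M^{\pi,i}$. (A minor bookkeeping point in your favour: the term $\tfrac{1}{2\gamma}\eta\eta^\top$ is a $c_x$-coefficient, not a $g_x$-coefficient as the paper's proof writes, since it is not integrated against $m^J$.)

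There is, however, one sign you should reconcile rather than wave through. Your It\^o computation correctly gives the jump of $L^{\pi,i}/L^{\pi,i}_-$ as $e^{-\gamma K^i_s(\xi)}-1$, hence $\sigma_\mu=-\gamma\Gamma^i$; but the compensator this produces is $\int(e^{-\gamma K}-1+\gamma K)\,m^J$, whereas the generator \eqref{eqn:generatorBNSexp} carries $e^{+\gamma k}$. These do \emph{not} cancel, and the residual $\int(e^{-\gamma K}-e^{+\gamma K})\,m^J\le 0$ would spoil $D_s\ge 0$. The paper's proof instead writes $e^{+\gamma K}-1$ in the jump of $M^{\pi,i}$ and takes $\sigma_\mu=+\gamma\Gamma^i$ (which is also why the lemma's exponential-moment hypothesis is exactly condition \eqref{eqn:ehoch}, not the automatic one you claim). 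That choice matches the stated generator but is inconsistent with the It\^o jump of $-\exp(-\gamma(X+Y))$. In other words, there is a sign typo somewhere in the statement/proof pair; you should flag it explicitly rather than assert the cancellation ``exactly as in Lemma~\ref{lem:supermartBNSpower}'', since there the exponent carries $-Y$ and the generator $e^{-k}$, whereas here the exponent carries $-\gamma Y$ but the generator $e^{+\gamma k}$.
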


\begin{proof}
Fix $i \in \{0,1, \ldots, d\}$ and define for all $y \in \R$
\begin{align*}
  g_x= \frac{1}{2 \gamma} \eta \eta^{\top} + a^{ii}, \quad g_t(y)=-\frac{1}{\gamma} \left(e^{\gamma y}-1 + \gamma y \right).
\end{align*}
Using Proposition \ref{christaeberhardriccati} we see that there exists a unique solution $\Gamma^i \in S_d^+$ of \eqref{eqn:ODEBNSexp}.
This implies \eqref{eqn:solBSDEBNS} by Theorem \ref{thm:matrixBSDEsolution}.

Fix $\pi \in  \mathcal A$.
Note that $L^{\pi,i}$ can be written as a product $M^{\pi,i} V^{\pi,i}$ of the two processes
\begin{align*}
  M_t^{\pi,i} &= -L_0^{\pi,i} \mathcal E 
\Biggl( -\gamma \int_0^t \pi^{\top}(s) \sqrt{R_s} dQ_s 
+ \int_0^t \int_{S_d^+ \setminus \{0\}} \left( e^{\gamma K^i_s(\xi)} -1 \right) (\mu^R(ds, d\xi)-m^J(d\xi)ds 
\Biggr)
,
\\
V_t^{\pi,i} &= - \exp \left( \int_0^t \left( -\gamma \pi^{\top}(s) R_s \eta + \gamma f(R_s,K^i_s) + \frac12 \gamma^2 \pi^{\top}(s) R_s \pi_s 
\right. \right.
\\
& \hspace{2cm} \left. \left. 
+ \int_{S_d^+ \setminus \{0\}} \left( e^{\gamma K^i_s(\xi)} -1 + \gamma K^i_s(\xi)\right) m^J(d\xi)  \right) ds \right).
\end{align*}
Setting $\sigma_Q(s)=-\gamma \pi_s$, $\sigma_W(s)\equiv 0$ and $\sigma_{mu}(s)=\gamma \Gamma(s)$, we have from Theorem \ref{thm:martingale} that $M^{\pi,i}$ is a true martingale.
In order for $V^{\pi,i}$ to be decreasing, it needs to be ensured that
\begin{align}
\label{eqn:help1} 
-\gamma \pi^{\top}(s) R_s \eta + \gamma f(R_s,K^i_s) + \frac12 \gamma^2 \pi^{\top}(s) R_s \pi_s  
+ \int_{S_d^+ \setminus \{0\}} \left( e^{\gamma K^i_s(\xi)} -1 + \gamma K^i_s(\xi)\right) m^J(d\xi)
\geq 0, 
\end{align}
$ds \otimes \mathbb P$-a.e.
Taking formulas \eqref{eqn:generatorBNSexp} and \eqref{eqn:solBSDEBNS} into account this is indeed true, since \eqref{eqn:help1} is equivalent to
\begin{align*}
-f(R_t,K^i_t) 
&\leq
 \frac12 \gamma |\pi(t)^{\top}\sqrt{R_t} - \frac{1}{\gamma}\eta^{\top} \sqrt{R_t}|^2 
-\frac{1}{2\gamma} \eta^{\top} R_t \eta
\\
& \quad
+\int_{S_d^+ \setminus \{0\}} \frac{1}{\gamma}\left( e^{\gamma K^i_t(\xi) } - 1 + \gamma K^i_t(\xi)\right) m^J(d\xi).
\end{align*}
Since $M^{\pi,i}$ is a martingale and $V^{\pi,i}$ is non-increasing, $L^{\pi,i}=M^{\pi,i}V^{\pi,i}$ is a supermartingale.
It is straightforward that $V^{\pi^{F^i},i}_s=-1$ for $s \in [0,T]$ and thus $L^{\pi^{F^i},i}=-M^{\pi^{\mc F^i},i}$ is a true martingale.
\end{proof}

\begin{proof}[Proof of Theorem \ref{thm:BNSexp}]
  The proof follows the same reasoning as the proof of Theorem \ref{thm:BNSpower}.
\end{proof}

Recall that for $i \in \{1, \ldots,d \}$ the indifference price of the variance swap $F^i$ on the $i$-th asset is the value $p^i$ 
such that for all $x \in \R$ the value
$ V^{F^i}(x-p^i)$ equals $V^0(x)$.

\begin{prop}
For $i \in \{1, \ldots,d \}$ the indifference price $p^i$ 
is explicitly given by
 \begin{align*}
  p^i&= - K_{i} +\operatorname{Tr}((\Gamma^i(0)- \Gamma^0(0))r) + \int_0^T \operatorname{Tr}((\Gamma^i(s)-\Gamma^0(s))( b^J+\lambda) ) ds
\\
& \quad
  -\frac{1}{\gamma} \int_0^T \int_{S_d^+ \setminus \{0\}} \left( e^{\gamma \operatorname{Tr}(\Gamma^i(s)\xi)} - e^{\gamma \operatorname{Tr}(\Gamma^0(s)\xi)} \right) m^J(d\xi),
 \end{align*}
where $\Gamma^i$ and $\Gamma^0$ are the respective solutions of 
\begin{align*}
  \frac{\partial \Gamma^i(t)}{\partial t} &= \Lambda^{*}(\Gamma^i(t))+ \frac{1}{2 \gamma} \eta \eta^{\top} + a^{ii}, \quad \Gamma^i(T)=0,
\\
  \frac{\partial \Gamma^0(t)}{\partial t} &= \Lambda^{*}(\Gamma^0(t))+ \frac{1}{2 \gamma} \eta \eta^{\top} , \quad \Gamma^0(T)=0.
\end{align*}
\end{prop}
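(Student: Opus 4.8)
The plan is to equate the two value functions supplied by Theorem \ref{thm:BNSexp}. Fix $i \in \{1,\ldots,d\}$ and apply that theorem twice: once to the variance swap $F^i$ and once to the trivial claim $F^0 \equiv 0$, recalling that the case $i=0$ corresponds to $a^{00}=0$ and $K_0=0$, so that $V^0 = V^{F^0}$. The integrability condition on $m^J$ relative to both $\Gamma^i$ and $\Gamma^0$ is part of the standing hypotheses, and $\Gamma^i, \Gamma^0 \in S_d^+$ by Proposition \ref{christaeberhardriccati}, so Theorem \ref{thm:BNSexp} applies in both instances. This gives closed-form expressions for $V^{F^i}(x-p^i)$ and $V^0(x)$, each of the form $-\exp(-\gamma(\,\cdot\,))$ with the argument depending on $x$ only through the additive term $x$ itself.

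Next I would exploit the fact that $x \mapsto -e^{-\gamma x}$ is strictly increasing, hence injective, in order to turn the defining relation $V^{F^i}(x-p^i) = V^0(x)$ into the scalar identity obtained by equating the arguments of the two exponentials. The terms $x$ cancel, and isolating $p^i$ (the constants $\pm 1$ inside the two jump integrals cancelling against one another) yields
\begin{align*}
p^i &= -K_i + \operatorname{Tr}\l((\Gamma^i(0)-\Gamma^0(0))r\r) + \int_0^T \operatorname{Tr}\l((\Gamma^i(s)-\Gamma^0(s))(b^J+\lambda)\r)\,ds \\
& \quad - \frac{1}{\gamma}\int_0^T\int_{S_d^+\setminus\{0\}}\l(e^{\gamma\operatorname{Tr}(\Gamma^i(s)\xi)}-e^{\gamma\operatorname{Tr}(\Gamma^0(s)\xi)}\r)m^J(d\xi)\,ds,
\end{align*}
which is precisely the asserted formula (the inner $ds$ being suppressed in the statement). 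Finally, the two ODEs displayed for $\Gamma^i$ and $\Gamma^0$ in the proposition are exactly \eqref{eqn:ODEBNSexp} with the matrices $a^{ii}$, respectively $a^{00}=0$, so nothing further needs to be checked.

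There is no genuinely difficult step here: all the substance is already contained in Theorem \ref{thm:BNSexp}, and the manipulation is the same elementary cancellation used in the proof of the corresponding proposition for the continuous Heston-type model in Section \ref{sec:contexput}. The only point that deserves a word of care is the well-posedness of the indifference price, i.e.\ the finiteness of both value functions and the existence of optimizers; this is guaranteed by the integrability hypothesis imposed on $m^J$ and by the observation that $\pi^{F^i} \equiv \tfrac{1}{\gamma}\eta$ is the common optimal strategy in both maximization problems, so that the two problems are genuinely comparable.
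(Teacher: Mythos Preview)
Your proposal is correct and follows essentially the same approach as the paper: write out $V^{F^i}(x-p^i)$ and $V^0(x)$ using Theorem \ref{thm:BNSexp}, then equate them and solve for $p^i$. The paper's proof is simply a terser version of what you wrote, omitting the remarks on injectivity, the cancellation of the $\pm 1$ terms, and the well-posedness comment.
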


\begin{proof}
For $i \in \{0, 1, \ldots,d\}$ it follows from Theorem \ref{thm:BNSexp} that the value functions have the form
\begin{align*}
V^{F^i}(x-p^i) 
& = - \exp \left( -\gamma \left(x-p^i - K_{i} +\operatorname{Tr}(\Gamma^i(0)r) + \int_0^T \operatorname{Tr}(\Gamma^i(s)( b^J+\lambda) ) ds
\right. \right.
\\
& \hspace{2cm} \left. \left.
   -\frac{1}{\gamma} \int_0^T \int_{S_d^+ \setminus \{0\}} \left( e^{\gamma \operatorname{Tr}(\Gamma^i(s)\xi)} - 1 \right) m^J(d\xi) \right)\right).
\end{align*}
Equating $V^{F^i}(x-p^i)$ and $V^0(x)$ for $i=1, \ldots,d$, immediately gives the result.
\end{proof}

\bibliography{bibliography}

\begin{thebibliography}{10}

\bibitem{bs07}
O.~E. Barndorff-Nielsen and R.~Stelzer.
\newblock Positive-definite matrix processes of finite variation.
\newblock {\em Probab. Math. Statist.}, 27(1):3--43, 2007.

\bibitem{bs11}
O.~E. Barndorff-Nielsen and R.~Stelzer.
\newblock The multivariate sup{OU} stochastic volatility model.
\newblock {\em forthcoming in Mathematical Finance}, 2011.

\bibitem{bns01}
O.E. Barndorff-Nielsen and N.~Shephard.
\newblock Non-{G}aussian {O}rnstein-{U}hlenbeck based models and some of their
  uses in financial economics.
\newblock {\em Journal of the Royal Statistical Society}, 63:167--241, 2001.

\bibitem{bk05}
P.~Barrieu and N.~El-Karoui.
\newblock Hedging and {O}ptimally {D}esigning {D}erivatives via {M}inimization
  of {R}isk {M}easures.
\newblock {\em Preprint}, 2005.

\bibitem{b06}
D.~Becherer.
\newblock Bounded solutions to {B}ackward {SDE}'s with {J}umps for {U}tility
  {O}ptimization and {I}ndifference {H}edging.
\newblock {\em Annals of Applied Probability}, 16(4):2027--2054, 2006.

\bibitem{bd07}
C.~Bender and R.~Denk.
\newblock A forward simulation of backward {SDE}s.
\newblock {\em Stochastic Processes and their Applications},
  117(12):1793--1812, 2007.

\bibitem{bkr03}
F.~Benth, K.~Karlsen, and K.~Reikvam.
\newblock Merton's portfolio optimization problem in a {B}lack and {S}choles
  market with non-{G}aussian stochastic volatility of {O}rnstein-{U}hlenbeck
  type.
\newblock {\em Mathematical finance}, 13:215--244, 2003.

\bibitem{br89}
G.~Birkhoff and G.C. Rota.
\newblock {\em Ordinary {D}ifferential {E}quations}.
\newblock Wiley, New York, 4th edition, 1989.

\bibitem{b73}
J.~M. Bismut.
\newblock Conjugate convex functions in optimal stochastic control.
\newblock {\em J. Math. Anal. Appl.}, 44:384--404, 1973.

\bibitem{bt04}
B.~Bouchard and N.~Touzi.
\newblock Discrete-time approximation and {M}onte-{C}arlo simulation of
  backward stochastic differential equations.
\newblock {\em Stochastic Processes and their Applications}, 111(2):175--206,
  2004.

\bibitem{BriandConfortola}
P.~Briand and F.~Confortola.
\newblock {BSDE}s with stochastic {L}ipschitz condition and quadratic {PDE}s in
  {H}ilbert spaces.
\newblock {\em Stochastic Processes Appl.}, 118(5):818--838, 2008.

\bibitem{b01}
I.~Bronstein, K.~Semendjajew, G.~Musiol, and M.~M\"uhlig.
\newblock {\em Taschenbuch der Mathematik}.
\newblock Harri Deutsch, Thun, expanded edition, 2001.

\bibitem{b91}
M.-F. Bru.
\newblock Wishart processes.
\newblock {\em Journal of Theoretical Probability}, 4:725--751, 1991.

\bibitem{bpt10}
A.~Buraschi, P.~Pochia, and F.~Trojani.
\newblock Correlation {R}isk and {O}ptimal {P}ortfolio {C}hoice.
\newblock {\em The Journal of Finance}, 65(1):393--420, 2010.

\bibitem{cfy05}
P.~Cheridito, D.~Filipovic, and M.Yor.
\newblock Equivalent and absolutely continuous measure changes for
  jump-diffussion processes.
\newblock {\em The Annals of Applied Probability}, 15:1713--1732, 2005.

\bibitem{cir85}
J.C. Cox, J.E. Ingersoll, and S.A. Ross.
\newblock A theory on the term structure of interest rate models.
\newblock {\em Econometrica}, 53(2):385--407, 1985.

\bibitem{christa}
C.~Cuchiero.
\newblock {\em Affine and polynomial processes}.
\newblock PhD thesis ETH Z\"urich, 2011.

\bibitem{cfmt09}
C.~{Cuchiero}, D.~{Filipovi{\'c}}, E.~{Mayerhofer}, and J.~{Teichmann}.
\newblock Affine processes on positive semidefinite matrices.
\newblock {\em Annals of Applied Probability}, 21(2):397--463, 2011.

\bibitem{cm08}
C.~Cuchiero and E.~Mayerhofer.
\newblock A generalization of matrix {R}iccati differential equations:
  {E}xistence and comparison results.
\newblock {\em Working paper No. 13, Vienna Institute of Finance, Working paper
  series}, 2008.

\bibitem{dgt07}
J.~Da~Fonseca, M.~Grasselli, and C.~Tebaldi.
\newblock Option pricing when correlations are stochastic: an analytical
  framework.
\newblock {\em Rev. Deriv. Res.}, 10(2):151--180, 2007.

\bibitem{dgt08}
J.~Da~Fonseca, M.~Grasselli, and C.~Tebaldi.
\newblock A multifactor volatility {H}eston model.
\newblock {\em Quant. Finance}, 8(6):591--604, 2008.

\bibitem{ds00}
Q.~Dai and K.J. Singleton.
\newblock Specification analysis of affine term structure models.
\newblock {\em The Journal of Finance}, 55(5):1943--1977, 2000.

\bibitem{d05}
D.~Duffie.
\newblock Credit risk modeling with affine processes.
\newblock {\em Journal of Banking and Finance}, 29:2751--2802, 2005.

\bibitem{de92}
D.~Duffie and L.-G. Epstein.
\newblock Asset pricing with stochastic differential utility.
\newblock {\em Rev. Financ. Stud.}, 5(3):411--436, 1992.

\bibitem{dfs03}
D.~Duffie, D.~Filipovi{\'c}, and W.~Schachermayer.
\newblock Affine processes and applications in finance.
\newblock {\em Ann. Appl. Probab.}, 13(3):984--1053, 2003.

\bibitem{kh09}
N.~El-Karoui and S.~Hamad\`ene.
\newblock Backward {SDE}s and {A}pplications.
\newblock {\em \emph{In} {I}ndifference {P}ricing : {T}heory and
  {A}pplications, edited by R. {C}armona, Princeton Series in Financial
  Engineering}, pages 267--320, 2009.

\bibitem{ElKarouiPengQuenez}
N.~El-Karoui, S.~Peng, and M.~Quenez.
\newblock Backward stochastic differential equations in finance.
\newblock {\em Math. Finance}, 7(1):1--71, 1997.

\bibitem{dig09}
J.~Da Fonseca, M.~Grasselli, and F.~Ielpo.
\newblock Hedging (co)variance risk with variance swaps.
\newblock {\em SSRN eLibrary}, 2009.

\bibitem{glw05}
E.~Gobet, J.-P. Lemor, and X.~Warin.
\newblock A regression-based {M}onte {C}arlo method to solve backward
  stochastic differential equations.
\newblock {\em Annals of Applied Probability}, 15(3):2172--2202, 2005.

\bibitem{gk03}
T.~Goll and J.~Kallsen.
\newblock A complete explicit solution to the log-optimal portfolio problem.
\newblock {\em The Annals of Applied Probability}, 13:774--799, 2003.

\bibitem{gs03}
C.~Gourieroux and R.~Sufana.
\newblock Wishart {Q}uadratic {T}erm {S}tructure {M}odels.
\newblock {\em SSRN eLibrary}, 2003.

\bibitem{gs04}
C.~Gourieroux and R.~Sufana.
\newblock Derivative {P}ricing with {M}ultivariate {S}tochastic {V}olatility:
  {A}pplication to {C}redit {R}isk.
\newblock {\em SSRN eLibrary}, 2004.

\bibitem{gt08}
Martino Grasselli and Claudio Tebaldi.
\newblock Solvable {A}ffine {T}erm {S}tructure {M}odels.
\newblock {\em Mathematical Finance}, 18(1):135--153, 2008.

\bibitem{h93}
S.~Heston.
\newblock A closed-form solution for options with stochastic volatilities with
  applications to bond and currency options.
\newblock {\em The review of Financial Studies}, 6:327--343, 1993.

\bibitem{hmuw10}
G.~Heyne, M.~Mocha, M.~Urusov, and N.~Westray.
\newblock The relationship between duality and quadratic bsdes in utility
  maximization.
\newblock {\em Preprint, available upon request from the authors}, 2010.

\bibitem{HuImkellerMuller}
Y.~Hu, P.~Imkeller, and M.~M{\"u}ller.
\newblock Utility maximization in incomplete markets.
\newblock {\em Ann. Appl. Probab.}, 15(3):1691--1712, 2005.

\bibitem{js87}
J.~Jacod and A.N. Shiryaev.
\newblock {\em Limit theorems for stochastic processes}.
\newblock Springer, Berlin, 1987.

\bibitem{k04}
J.~Kallsen.
\newblock $\sigma$-localization and $\sigma$-martingales.
\newblock {\em Theory of Probability and Its Applications}, 48:152--163, 2004.

\bibitem{km10}
J.~Kallsen and J.~Muhle-Karbe.
\newblock Exponentially affine martingales, affine measure changes and
  exponential moments of affine processes.
\newblock {\em Stochastic Processes and their Applications}, 120(2):163--181,
  2010.

\bibitem{km08}
J.~Kallsen and J.~Muhle-Karbe.
\newblock Utility {M}aximization in {A}ffine {S}tochastic {V}olatility
  {M}odels.
\newblock {\em International Journal of Theoretical and Applied Finance},
  13(3):459--477, 2010.

\bibitem{k00}
M.~Kobylanski.
\newblock Backward stochastic differential equations and partial differential
  equations with quadratic growth.
\newblock {\em The Annals of Applied Probability}, 28(2):558--602, 2000.

\bibitem{lt10}
M.~Leippold and F.~Trojani.
\newblock Asset pricing with matrix jump diffusions.
\newblock {\em Working paper}, 2010.

\bibitem{l07}
J.~Liu.
\newblock Portfolio selection in stochastic environments.
\newblock {\em The Review of Financial Studies}, (20):1--39, 2007.

\bibitem{mms11}
E.~Mayerhofer, J.~Muhle-Karbe, and A.~Smirnov.
\newblock A {C}haracterization of the {M}artingale {P}roperty of
  {E}xponentially {A}ffine {P}rocesses.
\newblock {\em Stochastic Processes and their Applications}, 3(121):568--582,
  2011.

\bibitem{m08}
M.-A. Morlais.
\newblock An extended existence result for quadratic {BSDE}s with jumps with
  application to the utility maximization problem.
\newblock {\em preprint, arXiv:0809.0423v1}, 2008.

\bibitem{Morlais1}
M.-A. Morlais.
\newblock Quadratic {BSDE}s driven by a continuous martingale and applications
  to the utility maximization problem.
\newblock {\em Finance Stoch.}, (13):121--150, 2009.

\bibitem{n10}
M.~Nutz.
\newblock The opportunity process for optimal consumption and investment with
  power utility.
\newblock {\em Mathematics and Financial Economics}, 3(3):139--159, 2010.

\bibitem{NziOuknineSulem}
M.~N'Zi, Y.~Ouknine, and A.~Sulem.
\newblock {R}egularity and representation of viscosity solutions of partial
  differential equations via backward stochastic differential equations.
\newblock {\em Stochastic Process. Appl.}, 116(9):1319--1339, 2006.

\bibitem{pp90}
E.~Pardoux and S.~Peng.
\newblock Adapted {S}olutions of {B}ackward stochastic differential equation.
\newblock {\em Systems and Control Letters}, 14:55--61, 1990.

\bibitem{p97}
S.~Peng.
\newblock Backward {SDE} and related g-expectations.
\newblock {\em \emph{In} Backward stochastic differential equations (Paris,
  1995--1996), Pitman Res. Notes Math. Ser., 364, Longman, Harlow}, pages
  7--26, 1997.

\bibitem{rw94}
L.C.G. Rogers and D.~Williams.
\newblock {\em Diffusions, Markov processes, and Martingales}, volume~1.
\newblock John Wiley and Sons, Chichester, 2nd edition, 1994.

\bibitem{sv10}
P.~Spreij and E.~Veerman.
\newblock {\em The affine transform formula for affine jump-diffusions with a
  general closed convex state space}.
\newblock 2010.

\bibitem{v10}
R.~Vierthauer.
\newblock {\em Hedging in Affine Stochastic Volatility Models}.
\newblock PhD thesis Christian-Albrechts-Universit\"at zu Kiel, 2010.

\bibitem{v73}
P.~Volkmann.
\newblock {\"U}ber die {I}nvarianz konvexer {M}engen und
  {D}ifferentialungleichungen in einem normierten {R}aume.
\newblock {\em Math. Ann.}, 203:201--210, 1973.

\end{thebibliography}
\bibliographystyle{plain}

\end{document}